\documentclass[11pt]{amsart}
\usepackage{amsmath}
\usepackage{amsfonts}
\usepackage{amsthm}
\usepackage{amssymb}
\usepackage{amscd}
\usepackage[all]{xy}
\usepackage{enumerate}
\usepackage{color}
\usepackage{bm}
\usepackage{amstext}

\usepackage{graphicx}


\keywords{Double solid, Hilbert scheme, infinitesimal Torelli problem} 

\subjclass{METTERE}

\pagestyle{myheadings}

\theoremstyle{plain}
\newtheorem{thm}{Theorem}[subsection]

\newtheorem{prop}[thm]{Proposition}

\newtheorem{cor}[thm]{Corollary}
\newtheorem{lem}[thm]{Lemma}

\theoremstyle{definition}
\newtheorem{defn}[thm]{Definition}

\newcommand{\sL}{\mathcal{L}}

\newcommand{\sO}{\mathcal{O}}
\newcommand{\sP}{\mathcal{P}}
\newcommand{\sQ}{\mathcal{Q}}

\newcommand{\sS}{\mathcal{S}}

\newcommand{\sZ}{\mathcal{Z}}

\newcommand{\mC}{\mathbb{C}}

\newcommand{\mP}{\mathbb{P}}

\numberwithin{equation}{section}

\newenvironment{fcaption}{\begin{list}{}{
\setlength{\leftmargin}{35pt}
\setlength{\rightmargin}{35pt}
\setlength{\labelsep}{5pt}
}}{\end{list}}

\setcounter{section}{-1}



\author{Pietro Corvaja}
\address{D.M.I.F. \\
the University of Udine\\
Udine, 33100, Italy
\texttt{pietro.corvaja@uniud.it}}

\author{Francesco Zucconi}
\address{D.M.I.F. \\
the University of Udine\\
Udine, 33100, Italy
\texttt{francesco.zucconi@uniud.it}}

\begin{document}

\begin{center}
\textbf{\Large
The surface of  Gauss double points}
\par
\end{center}
{\Large \par}

$\;$

\begin{center}
Pietro Corvaja, Francesco Zucconi 
\par\end{center}

\vspace{5pt}

$\;$

\begin{fcaption} {\small  \item 
Abstract. We study the surface of Gauss double points associated to a very general quartic surface and the natural morphisms associated to it.

}\end{fcaption}

\vspace{0.5cm}


\markboth{Pietro Corvaja, Francesco Zucconi}{}


\section{Introduction}
\subsection{The result}
We work over $\mC$, the complex number field. In this paper $X\subset \mP^3$ is a very general quartic surface. In particular there are no lines inside it. 
We explicitly describe a singular surface $\Sigma_{\rm{dou}}$ naturally associated to $X$. To the best of our knowledge $\Sigma_{\rm{dou}}$ has not been studied yet.
In order to introduce and to study $\Sigma_{\rm{dou}}$ we revise some aspects of the geometry of quartic surfaces.
 We ground our exposition on \cite{Wi}. Indeed \cite{Wi} contains also both a well written modern account about the projective Gauss map and a very nice description of the geometry of a general quartic surface.

Given the surface $X$, it is well known that its bitangents describe a smooth algebraic surface $S$ inside the Gassmannian $\mathbb G(2,4)$ of lines of $\mP^3$. 
If $\sQ_{S}$ is the restriction to $S$ of the universal bundle $\sQ$ over $\mathbb G(2,4)$, it is also well-know that inside its Grothendieck's 
 projectivisation $\mP(\sQ_{S})$ we can define the surface $Y\subset\mP(\sQ)$ of contact points associated to $S$. Indeed a point $y\in Y$ is a couple $y=([l],p)$ where $l$ is a bitangent line of $X$ and $p\in l\cap X$. 
 The geometry of $Y$ is quite well-known; c.f. see: Proposition \ref{ilrivestimento doppio ramificato} and the references there. Together with $S$ and $Y$ there is a third surface which naturally comes with the geometry of $X$. 
 To describe it, we consider a point $p\in X$ and we denote by $T_pX$ the projective tangent space of $X$ at $p$. It is known that the closure of the loci of those points $p\in X$ such that the hyperplane section $X_p:=T_pX\cap X$
  has geometrical genus strictly less than $2$ is a $1$-dimensional subscheme $C_{{\rm{dou}}}$ inside $X$; c.f. see: Theorem \ref{dovesta}.  We will see that for a {\it{general}} point $p\in C_{\rm{dou}}$
 there exists another unique singular point $p'\in X_p$ such that $T_pX=T_{p'}X$. Clearly both $p,p'\in C_{\rm{dou}}$ and since $p\neq p'$ there exists a unique line $l_{p,p'}:=\langle p,p'\rangle\subset\mP^3$ passing throught $p,p'$. 
 Clearly
$[l_{p,p'}]\in S\subset\mathbb G$. The new singular surface we have mentioned above is the locus $\Sigma_{\rm{dou}}\subset\mP^3$ obtained by the closure of the surface swept by the lines $l_{p,p'}$ where $p\in C_{\rm{dou}}$ is a general point. We give a description of the geometry of $\Sigma_{\rm{dou}}$. The projective bundle $\mP(\sQ_S)$ is easily seen to coincide with the following incidence variety: $\{ ([l],p)\in S\times\mP^3\mid p\in l\}$. One of the most basic morphisms associated to the geometry of $X$ is the forgetful one:
$$
f\colon \mP(\sQ_{S})\to \mP^3,\, ([l],p)\mapsto p
$$
Nevertheless it has not been yet deeply studied. We denote by $B(f)\hookrightarrow \mP^3$
 its branch locus and by $R(f)\hookrightarrow\mP(\sQ_S)$ its ramification one. We prove that there exists a curve $C\subset S$ which parameterises the bitangent lines associated to the couples $p, p'\in C_{\rm{dou}}$ where $T_pX=T_{p'}X$. Geometrically $C\subset S$ is birational to the quotient of $C_{\mathrm{dou}}\subset X$ by the natural involution $p\mapsto p'$ where $[l_{p,p'}]\in S$.

 In particular we can construct the ruled surface $\pi_C\colon \Sigma\to C$ obtained by the pull-back of the natural morphism $\pi_S\colon\mP(\sQ_S)\to S$ via the natural inclusion $C\hookrightarrow S$. We can prove:
 

\medskip
\noindent
{\bf{Main Theorem.}} {\it{If $X$ is a very general quartic surface then $S$ and $Y$ are smooth surfaces and $C_{\rm{dou}}$, $C^{\vee}_{\rm{dou}}$ are singular curves whose singularities are fully classified. The morphism $f\colon \mP(\sQ_S)\to\mP^3$ is finite of degree $12$. The almost ruled surface of Gauss double points $\Sigma_{\rm{dou} }$ has degree $160$ and osculates the quartic $X$ along $C_{\rm{dou}}$. It holds:
\begin{enumerate}[{1.}]
\item $B(f)=X\cup \Sigma_{\rm{dou} }$
\item $R(f)=Y\cup \Sigma$.
\end{enumerate}
Moreover the natural morphism $f\colon \mP(\sQ_S)\to\mP^3$ induces by restriction two morphisms $\rho\colon Y\to X$ and $\rho_{\rm{dou}}\colon \Sigma \to  \Sigma_{\rm{dou} }$ of degree respectively $6$ and $1$.}}
\medskip

\noindent
For the proof of the Main Theorem; see: Subsection \ref{subs.teoremageometrico}. Actually we need a detailed description of the geometry associated to the curves $C_{\rm{dou}}$, $C^{\vee}_{\rm{dou}}$. This geometry is shown in Proposition \ref{normalizzodouble}. The proof of  Proposition \ref{normalizzodouble} is postponed in Subsection \ref{subs.proposizionegeometrica}.

Finally we like to mention that this work was originally motivated by Diophantine problems: smooth quartic surfaces are particularly interesting in Diophantine geometry, since they lie at the frontier between rational surfaces, for which the distribution of rational points is well understod, and surfaces of general type, for which it is conjecture that their rational points are never Zariski-dense; for quartic surfaces defined over a number field, it is widely believed that their rational points become  Zariski-dense after a suitable finite extension of their field of definition, but this is proved only in very particular cases, and  no example with Picard number one is known  where this density can be proved. 

Quartic surfaces are limiting cases also for the problem of integral points on open subsets of $\mP^3$: the complement of a surface of degree $\leq 3$ in $\mP^3$ is known to have potential density of integral points; for complements of smooth surfaces of degree five or more, Vojta's conjecture predicts degeneracy (but no case is known for the complement of a smooth surface).  The complement of quartic surface should have a potentially dense set of integral points, but again this is still a widely  open problem. Again by Vojta's conjecture, we expect that removing the union of a quartic surface and any other surface from $\mP^3$ produces an affine variety with degenerate sets of integral points. As a by product of this work and our previous work \cite{CZ}, we could, for instance, deduce the finiteness of the set of integral points, over every ring of $S$-integers, on the complement of the union of a quartic surface $X$ and its associated surface $\Sigma_{\mathrm{dou}}$ to be described in the present work. We intend to devote a future paper to the arithmetic applications of these geometrical investigations.

\section{The morphism of bitangents} 

We agree that a general point on a variety $M$ satisfies a property $\sP$, if there exists an open dense subset of $M$ satisfying the property $\sP$ and that 
 a very general point on $M$ satisfies a property $\sP$ , if there exists a countable union $\sZ$ of closed proper subsets of $X$ such that all the points outside $\sZ$ satisfy the property $\sP$.

In this section we introduce some of the geometrical objects which appear in the statement of the Main Theorem.

Let $V$ be a complex vector space of dimension $4$ and let $V^{\vee}$ its $\mathbb C$-dual. We set $\mP^3:=\mP(V^{\vee})$. Let $F\in {\rm{Sym}}^{4}V$ and let 
$X:=(F=0)\subset \mP^3$ be the associated quartic surface. In this paper, unless otherwise stated, we assume that $X$ is a very general quartic. Indeed we need to use Yau-Zaslov formula; see \cite[Formula: 13.4.2]{Hu}.

The surface $X$ comes naturally with three other surfaces we are going to describe.
\subsection{The surface of bitangents}
Let $\mathbb G:=\mathbb G(2, V^{\vee})$ be the Grassmann variety which parameterises the lines of $\mP^3$. 
\begin{defn}\label{definizionedibitangente} A line $l\subset\mP^3$ is a bitangent line to $X$ if the subscheme $X_{|l}\hookrightarrow l$ is non reduced over each supporting point.
\end{defn}
We denote by $S\subset \mathbb G$ the scheme parameterising bitangent lines of $X$; that is:
\begin{defn} We call
\begin{equation}\label{superficiebitangenti}
S:=\{ [l]\in\mathbb G\mid X_{|l}\, {\rm{is}}\, {\rm{a}}\, {\rm{bitangent}}\, {\rm{to}} X\}.
\end{equation}
{\it{the variety of bitangents to $X$}}.
\end{defn}

\subsection{The surface of contact points}
We have the standard exact sequence of vector bundles on $\mathbb G$:
\begin{equation}
0\to \sQ^{\vee}\to V^{\vee}\otimes\sO_\mathbb G\to \sS\to 0.
\end{equation}
A point $\alpha\in \sQ^{\vee}$ is a couple $([l],p)$ where $[l]\in\mathbb G$, $p\in l\subset \mP^3$. We denote by $\mP(\sQ)$ the variety ${\rm{Proj}}({\rm{Sym}}(Q))$. By definition $\mP(\sQ)$ coincides with the universal family of lines over $\mathbb G$:
$$
\mP(\sQ)=\{ ([l],p) \in \mathbb G \times \mP^3 \mid p\in l \}.
$$
We denote by $\pi_{\mathbb G}\colon \mP(\sQ)\to\mathbb G$ the natural projection and following the mainstream we call {\it{the universal exact sequence}} the following one:
\begin{equation}\label{universal}
0\to \sS^{\vee}\to V\otimes\sO_\mathbb G\to \sQ\to 0.
\end{equation}
By the inclusion $j_{S}\colon S\hookrightarrow \mathbb G$ we can define $\sQ_{S}:=j_{S}^{\star}\sQ$. It remains defined the variety of contact points.
\begin{defn}\label{superficiebitangentipuntate}
We call
\begin{equation}
Y:=\{ ([l],p)\in S\times X\mid p\hookrightarrow X_{|l}  \}.
\end{equation}
{\it{the variety of contact points}}.
\end{defn}
Obviously there is an embedding $j_Y\colon Y\hookrightarrow \mP(\sQ_{S})$ and the natural morphism $\pi_{S}\colon \mP(\sQ_{S})\to S$ restricts to a morphism $\pi\colon Y\to S$ which we call {\it{the forgetful morphism}}.

\subsection{The double cover subscheme }
 We will need to introduce the subscheme of $X$ given by those points $p$ such that the restriction of $X$ to the tangent plane of $X$ at $p$ is a curve $X_p$ of geometrical genus less than $2$. It is a $1$-dimensional subscheme $C_{{\rm{dou}}}$ inside $X$. Following the literature:
\begin{defn}\label{doublecurve}
We call
\begin{equation}
C_{{\rm{dou}}}:=\{ p\in X\mid g(X_p)\leq 1\}.
\end{equation}
{\it{the  double cover subscheme of $X$}}.
\end{defn}
We will see in Proposition \ref{doppioricoprimento} and in Proposition \ref{propietanumeriche} that if $X$ is general then $C_{{\rm{dou}}}$ is an irreducible singular curve.
\subsubsection{The double cover subscheme and the Gauss map}
On $X$ it is defined the Gauss map  $\phi_{{\rm{Gauss}}}\colon X\to  (\mP^3)^{\vee}$, which is a morphism. We set $$C_{{\rm{dou}}}^{\vee}: =\phi_{{\rm{Gauss}}} (C_{\rm{dou}}).$$
Let $\nu_{{\rm{dou}}} \colon {\widetilde{ C_{{\rm{dou}}}}}\to C_{{\rm{dou}}}$ and  respectively $\nu_{{\rm{dou}}}^{\vee}\colon {\widetilde{ C^{\vee}_{{\rm{dou}}}} }\to C_{{\rm{dou}}}^{\vee}$ be the normalisation morphisms. The following Proposition is of interest in itself:
\begin{prop}\label{normalizzodouble} There exist embeddings 
$$j_{{\rm{dou}}} \colon {\widetilde{ C_{{\rm{dou}}}}}\hookrightarrow Y\,\, {\rm{and}}\,\, 
 j^{\vee}_{{\rm{dou}}}\colon {\widetilde{ C^{\vee}_{{\rm{dou}}}} }\hookrightarrow S
 $$
 such that the following diagram is commutative:
\begin{equation}\label{diagrammadellenormalizzodouble}
\xymatrix{
&C_{{\rm{dou}}} \ar[d]^{     \phi_{{\rm{Gauss}}}    }&\ar[l]_{ \nu_{\rm{dou}}   }   {\widetilde{ C_{{\rm{dou}}}}}   \ar[d]^{    \pi_{{\rm{dou}}}    } \ar[r]^{j_{ \rm{dou}}} &Y \ar[d]^{\pi} &\\
& C^{\vee}_{{\rm{dou}}} &    \ar[l]_{ \nu^{\vee}_{\rm{dou}}   }    {\widetilde{ C^{\vee}_{{\rm{dou}}}} } \ar[r]^{j^{\vee}_{ \rm{dou}}} & S&
}
\end{equation}
\noindent
where $\pi_{{\rm{dou}}}\colon  {\widetilde{ C_{{\rm{dou}}}}}\to {\widetilde{ C^{\vee}_{{\rm{dou}}}}}$ is a $2$-to-$1$ branched covering induced by the restriction of $\pi\colon Y\to S$ to $ {\widetilde{ C_{{\rm{dou}}}}}$.
\end{prop}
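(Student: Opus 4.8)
The plan is to read the whole of diagram \eqref{diagrammadellenormalizzodouble} as the geometric translation of the assignment, defined on a general point $p\in C_{{\rm{dou}}}$, of a second contact point $p'$ and of the bitangent line $l_{p,p'}$ through $p$ and $p'$. First I would record the generic picture. Since $X_p=T_pX\cap X$ is a plane quartic, it has arithmetic genus $3$, so the condition $g(X_p)\leq 1$ of Definition \ref{doublecurve} forces the total $\delta$-invariant of $X_p$ to be at least $2$; for a general $p\in C_{{\rm{dou}}}$ (using Theorem \ref{dovesta} and Proposition \ref{propietanumeriche}) the section $X_p$ has exactly two nodes, one at $p$ and one at a further point $p'$, with $T_pX=T_{p'}X$. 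The line $l_{p,p'}$ is then contained in $T_pX=T_{p'}X$, hence is tangent to $X$ at both $p$ and $p'$, so $[l_{p,p'}]\in S$ and $([l_{p,p'}],p)\in Y$. The rule $p\mapsto p'$ is a rational involution of $C_{{\rm{dou}}}$; since $\widetilde{C_{{\rm{dou}}}}$ is a smooth projective curve it lifts to a genuine involution $\sigma$ of $\widetilde{C_{{\rm{dou}}}}$.

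Because a general tangent plane of $X$ is tangent at a single point (biduality), whereas for $p\in C_{{\rm{dou}}}$ the plane $T_pX$ is tangent at the two points $p,p'$, the restriction of $\phi_{{\rm{Gauss}}}$ to $C_{{\rm{dou}}}$ is generically $2$-to-$1$ onto $C^{\vee}_{{\rm{dou}}}$ and identifies $p$ with $p'=\sigma(p)$. Hence the normalisation $\widetilde{C^{\vee}_{{\rm{dou}}}}$ is identified with the quotient $\widetilde{C_{{\rm{dou}}}}/\sigma$, and I would define $\pi_{{\rm{dou}}}$ to be the corresponding quotient morphism; the left-hand square then commutes by construction and $\pi_{{\rm{dou}}}$ is $2$-to-$1$. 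For the horizontal arrows, the assignments $p\mapsto ([l_{p,p'}],p)$ and $[T_pX]\mapsto [l_{p,p'}]$ define rational maps $C_{{\rm{dou}}}\dashrightarrow Y$ and $C^{\vee}_{{\rm{dou}}}\dashrightarrow S$; precomposing with $\nu_{{\rm{dou}}}$ and $\nu^{\vee}_{{\rm{dou}}}$ and using that a rational map from a smooth curve to a projective variety is a morphism, they extend uniquely to $j_{{\rm{dou}}}\colon \widetilde{C_{{\rm{dou}}}}\to Y$ and $j^{\vee}_{{\rm{dou}}}\colon \widetilde{C^{\vee}_{{\rm{dou}}}}\to S$. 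The identity $\pi\circ j_{{\rm{dou}}}=j^{\vee}_{{\rm{dou}}}\circ\pi_{{\rm{dou}}}$ holds over the dense locus of general $p$, where both sides send $\tilde p$ to $[l_{p,p'}]$, hence everywhere; this is commutativity of the right-hand square.

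It remains to show that $j_{{\rm{dou}}}$ and $j^{\vee}_{{\rm{dou}}}$ are embeddings, and this is where I expect the real work. For injectivity of $j_{{\rm{dou}}}$, note that its composition with the projection $\mathrm{pr}_X\colon Y\to X$, $([l],p)\mapsto p$, equals $\nu_{{\rm{dou}}}$; so two points with the same image lie over the same point of $C_{{\rm{dou}}}$, and over a smooth point this already forces them to coincide. Over a singular point one must check that the distinct local branches carry distinct bitangent lines $l$, so that they are separated by the first coordinate in $Y$; this is exactly the information supplied by the classification of the singularities of $C_{{\rm{dou}}}$ in Proposition \ref{propietanumeriche}. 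For the immersion property, $d(\mathrm{pr}_X\circ j_{{\rm{dou}}})=d\nu_{{\rm{dou}}}$ is injective wherever the branch of $C_{{\rm{dou}}}$ is smoothly parametrised, while at a cuspidal branch, where $d\nu_{{\rm{dou}}}$ drops rank, one must verify that the bitangent coordinate moves to first order; again this reduces to the local study of Proposition \ref{propietanumeriche}, and the same arguments transported along the Gauss map handle $j^{\vee}_{{\rm{dou}}}$. Finally, being the quotient by the involution $\sigma$, $\pi_{{\rm{dou}}}$ is automatically a $2$-to-$1$ branched covering whose branch points are the fixed points of $\sigma$, namely the points of $\widetilde{C_{{\rm{dou}}}}$ where the two contact points come together ($p=p'$). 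The main obstacle is thus the local analysis at the singular points of $C_{{\rm{dou}}}$ and $C^{\vee}_{{\rm{dou}}}$: everything else reduces to extending rational maps from smooth curves and to the generic bitangency of $T_pX$.
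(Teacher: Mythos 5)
Your construction of the maps and of the commutative diagram is sound and matches the paper's set-up: the rule $p\mapsto ([l_{p,p'}],p)$ on the open locus of simple Gauss double points, extension over the normalisations because a rational map from a smooth curve to a projective variety is a morphism, and the identification of $\pi_{{\rm{dou}}}$ with the quotient by the lifted involution. But the actual content of the proposition is that $j_{{\rm{dou}}}$ and $j^{\vee}_{{\rm{dou}}}$ are \emph{embeddings}, and on this point your argument has a genuine gap: you reduce it to two local verifications (that the branches of $C_{{\rm{dou}}}$ at a node of a Gauss triple point carry distinct bitangent lines, and that at a cuspidal branch the $S$-coordinate moves to first order so that the differential of $j_{{\rm{dou}}}$ does not vanish), and then assert that these follow from Proposition \ref{propietanumeriche}. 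That proposition only classifies the singularities of $C_{{\rm{dou}}}$ as a curve inside $X$; it says nothing about how the branches lift to $Y$ or $S$, so the first-order behaviour of the bitangent coordinate along a cuspidal branch is precisely the computation you would still have to do, and you never do it. As written, the embedding claim is named rather than proved.

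The paper takes an entirely different, global route that avoids this local analysis. It first establishes (via the ramification-divisor computation, Lemma \ref{ramif2}) that the image curve $C=j^{\vee}_{{\rm{dou}}}(\widetilde{C^{\vee}_{{\rm{dou}}}})$ lies in $|4H_{\mathbb G|S}|$; by adjunction on $S$ (using Lemma \ref{chowclass} and Theorem \ref{formulae}) the irreducible curve $C$ has arithmetic genus $561$, which by Proposition \ref{dualedouble} equals the geometric genus of $\widetilde{C^{\vee}_{{\rm{dou}}}}$. Since $j^{\vee}_{{\rm{dou}}}$ is birational onto $C$, the equality $p_a(C)=g(\widetilde{C^{\vee}_{{\rm{dou}}}})$ forces $C$ to be smooth and $j^{\vee}_{{\rm{dou}}}$ to be an isomorphism onto it; an analogous genus comparison on the double cover $\pi\colon Y\to S$ handles $j_{{\rm{dou}}}$. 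If you want to keep your local approach you must actually carry out the branch-separation and cusp-resolution computations; otherwise the genus count is the efficient way to close the argument, at the price of depending on the divisor-class computations of Lemma \ref{ramif2} and the genus $561$ of Proposition \ref{dualedouble}.
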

\noindent
For the proof of the above Proposition \ref{normalizzodouble}; see: Subsection \ref{subs.proposizionegeometrica}.

\subsection{The almost ruled surface of  Gauss double points}
We will need to consider also a third surface naturally associated to $X$. We have found no reference on it. We will see that for a {\it{general}} point $p\in C_{\rm{dou}}$ there exists another unique singular point $p'\in X_p$ such that $T_pX=T_{p'}X$. Clearly both $p,p'\in C_{\rm{dou}}$ and since $p\neq p'$ there exists a unique line $l_{p,p'}:=\langle p,p'\rangle\subset\mP^3$ passing throught $p,p'$. We will show that 
$[l_{p,p'}]\in  {j^{\vee}_{ \rm{dou}}}{\widetilde{ C^{\vee}_{{\rm{dou}}}} } \subset S\subset\mathbb G$. On the other hand the natural morphism $\rho_{\mP^3}\colon\mP(\sQ)\to\mP^3$ if restricted to $\mP(\sQ_{S})$  gives a morphism $f\colon \mP(\sQ_{S})\to\mP^3$. We will show that the image 
$\Sigma_{\rm{dou} }:=f(\pi_S^{-1}( {j^{\vee}_{ \rm{dou}}}{\widetilde{ C^{\vee}_{{\rm{dou}}}} }   )$ is a surface inside $\mP^3$ which is the closure of the surface swept by the lines $l_{p,p'}$ as $p\in C_{\rm{dou}}$ is a general point.
\begin{defn} \label{rigariga} We call the subscheme $\Sigma_{\rm{dou} }\hookrightarrow\mP^3$ given by the closure of the surface swept by the lines $l_{p,p'}$ where $p\in C_{\rm{dou}}$ is a general point {\it{the almost ruled surface of  Gauss double points}}.
\end{defn}

\subsection{The basic morphism} Note that geometrically $\mP(\sQ_{S})$ is easily seen as
\begin{equation}\label{lavarietàdellerettepuntate}
 \mP(\sQ_{S})=\{ ([l],p)\in S\times\mP^3\mid p\in l\}.
\end{equation}
We have introduced above a basic object of this geometry: the morphism $f\colon \mP(\sQ_{S})\to\mP^3$. It is obtained by projecting to the second factor and it is called the morphism of bitangents. 
We want to understand its branch locus subscheme $B(f)\hookrightarrow\mP^3$ and its ramification one $R(f)\hookrightarrow \mP(\sQ_S)$. 

\section{Special curves on a quartic surface} 

\subsection{Singularities of a plane quartic}
We recall a basic fact on irreducible plane quartic.
\begin{lem}\label{singularitiesofplanequartics} Let $C\subset\mP^2$ be an irreducible plane quartic. Then $C$ has at most three singularities. If $g(C)=1$ then the following cases occurs
\begin{enumerate}[{(1)}]
\item a  point of multiplicity $3$, or
\item a tacnode, or
\item two nodes, or
\item a node and a cusp, or
\item two cusps.
\end{enumerate}
If $g(C)=2$ then $C$ has exactly one node or one cusp.
\end{lem}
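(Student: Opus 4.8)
The plan is to recover the singularities of $C$ from the gap between its arithmetic and geometric genus. Since $C$ is an irreducible plane quartic, its arithmetic genus is $p_a(C)=\binom{4-1}{2}=3$, and the normalisation $\nu\colon\widetilde C\to C$ yields $g(C)=p_a(C)-\sum_{p\in\Sing C}\delta_p=3-\sum_p\delta_p$, where $\delta_p=\dim_{\mC}(\nu_*\sO_{\widetilde C}/\sO_C)_p$ is the local $\delta$-invariant. Every singular point contributes $\delta_p\geq 1$, so $\sum_p\delta_p=3-g(C)\leq 3$ forces $C$ to carry at most three singular points; this is the first assertion, and it already shows that at most two singularities can occur once $g(C)\geq 1$.

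The second step is to bound multiplicities. I will use the standard inequality $\delta_p\geq\binom{m_p}{2}$, where $m_p$ is the multiplicity of $C$ at $p$, which follows from the expression $\delta_p=\sum_i\binom{m_{p,i}}{2}$ over the sequence of infinitely near multiplicities in a resolution by blow-ups. Hence a singular point with $\delta_p\leq 2$ is a double point ($m_p=2$), and the analytic type of a plane double point is one of the $A_k$ singularities $y^2=x^{k+1}$, with $\delta=\lfloor (k+1)/2\rfloor$: the node $A_1$ and the cusp $A_2$ have $\delta=1$, while the tacnode $A_3$ has $\delta=2$. For $g(C)=2$ this finishes the argument immediately: $\sum_p\delta_p=1$ gives a single double point with $\delta=1$, which by the classification is a node or a cusp, proving the last assertion.

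For $g(C)=1$ one has $\sum_p\delta_p=2$, and $\Sing C$ is either two points, each of $\delta=1$ — giving two nodes, a node and a cusp, or two cusps, i.e. cases (3)--(5) — or a single point $p$ with $\delta_p=2$, whose local model is pinned down by its tangent cone and multiplicity sequence, producing the two remaining configurations (1)--(2). Here the analysis invokes a projection/tangent-cone argument: a point of multiplicity three would satisfy $\delta_p\geq\binom{3}{2}=3$ and make $C$ rational under projection from it, which locates case (1) at the boundary of the $g=1$ stratum and separates it from the tacnode of case (2). The main obstacle is exactly this exhaustiveness of the \emph{local} classification: I must verify that no further analytic type with the prescribed $\delta$-invariant is overlooked — in particular distinguishing the tacnode $A_3$ from the ramphoid cusp $A_4$, both of $\delta=2$ — and that every surviving configuration is compatible with the irreducibility of $C$ and is genuinely realised by a plane quartic. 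This is where the blow-up computation of $\delta_p$ and the classification of curve singularities of small $\delta$ carry the real weight of the proof.
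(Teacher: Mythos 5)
The paper gives no argument to compare with (its proof reads ``Easy''), and your strategy --- $g(C)=p_a(C)-\sum_p\delta_p=3-\sum_p\delta_p$ combined with $\delta_p\geq\binom{m_p}{2}$ --- is surely the intended one; the bound of three singular points and the $g(C)=2$ case are complete and correct. The genuine gap is the $g(C)=1$ case with a single singular point of $\delta_p=2$, which you explicitly leave open. Your own inequality shows such a point has multiplicity $2$, and a reduced plane double point is analytically equivalent to $y^2=x^{k+1}$ (Weierstrass preparation plus completion of the square), with $\delta=\lfloor (k+1)/2\rfloor$; hence $\delta_p=2$ forces $k=3$ (tacnode) or $k=4$ (ramphoid cusp). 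You name this dichotomy but do not resolve it, and it cannot be resolved in favour of the tacnode alone: irreducible quartics whose only singularity is an $A_4$ point do exist (a standard dimension count in the $\mP^{14}$ of quartics, or the classification of singular quartics), so the ramphoid cusp is a genuine genus-one configuration absent from the lemma's list unless one reads ``tacnode'' loosely as ``double point with $\delta=2$''. This step is where, as you say yourself, the real weight of the proof lies, and it is missing.

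Second, your argument refutes case (1) of the statement rather than proving it: a point of multiplicity $3$ has $\delta_p\geq\binom{3}{2}=3$, hence forces $g(C)=0$ (equivalently, projection from the triple point makes the quartic rational), so it cannot occur on an irreducible quartic of geometric genus exactly $1$. Your remark that this configuration sits ``at the boundary of the $g=1$ stratum'' is not a proof of the stated implication, which concerns curves with $g(C)=1$, not limits of such curves. You should either draw the conclusion your computation gives --- the triple point belongs to the $g=0$ list, and the $g=1$ list should instead contain the ramphoid cusp --- and record it as a correction to the statement, or make explicit a nonstandard reading under which (1) and (2) are meaningful. As written, the proposal establishes (3)--(5) and the $g=2$ claim, but neither proves (1) nor completes (2).
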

\begin{proof} Easy.
\end{proof}

\subsection{Classification of points of a general quartic surface} 
Let $F\in \mathbb C[x_0,x_1,x_2,x_3]$ be as above a general homogeneous polynomial of degree $4$ and let $X:=V(F)\subset \mP^3$ be the corresponding quartic surface. Since $X$ is smooth we can define the   Gauss map, which maps a point $X\ni p$ to its tangent space seen as a point of the dual projective space: 
$$
\phi_{ {\rm{Gauss}}}\colon X\ni p\to {{T_{p}X}}\in (\mP^3)^{\vee} 
$$

Clearly $\phi_{{\rm{Gauss}}}\colon X\to  (\mP^3)^{\vee}$ is a morphism since $X$ is smooth. In the rest of this section we strongly rely on \cite{Wi}. Actually, by the stability property of the Gauss map, c.f. \cite[Section 2.1]{Wi}, we know the analytic behaviour of the Gauss map in an analytic neighborhood of any point $p\in X$. It follows that for any point $p\in X$ it holds that $d\phi_{ {\rm{Gauss}},\, p}\neq 0$; see \cite[Proposition 2.15]{Wi}. 

\begin{prop}\label{tangentcurves} Let $X$ be a general quartic surface. Then:
\begin{enumerate} [{(1)}]
\item All tangent curves are irreducible
\item Tangent curves have only singularities of multiplicity $2$ (and the second fundamental form is non zero for any $p\in X$).
\item All rational tangent curves are nodal.
\item There are no elliptic cuspidal tangent curves on  $X$
\end{enumerate}
\end{prop}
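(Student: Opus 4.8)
The plan is to treat the four assertions separately: (1) and (2) follow quickly from the geometry of the Gauss map, while (3) and (4) reduce to a finite enumeration of singularity types followed by a dimension count in the space of quartics.

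First I would fix the basic dictionary. For $p\in X$ the tangent curve $X_p=T_pX\cap X$ is a plane quartic of arithmetic genus $3$, and, since $X$ is smooth, a point $q\in X_p$ is a singular point of $X_p$ exactly when $T_qX=T_pX$, i.e. when $q$ lies in the Gauss fibre over $\phi_{\mathrm{Gauss}}(p)$. For (1): by the Noether--Lefschetz theorem a very general quartic has $\Pic X=\mZ\cdot H$, so every effective curve on $X$ has degree divisible by $4$; a reducible $X_p$ would split off a component of degree $\le 3$, which is impossible, so every tangent curve is irreducible. For (2): the multiplicity of $X_p$ at a singular point $q$ equals $2$ precisely when the second fundamental form $\mathrm{II}_q$ is non-zero, and $\mathrm{II}_q\neq 0$ for every $q\in X$ because $d\phi_{\mathrm{Gauss},q}\neq 0$ by \cite[Proposition 2.15]{Wi}; hence every singular point of every $X_p$ has multiplicity exactly $2$, and no triple points (nor worse) occur.

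For (3) and (4) I would next translate the local analytic type of a double point into curvature data, using the stable normal forms of the Gauss map from \cite{Wi}: a double point $q$ of $X_p$ is a node when $\mathrm{II}_q$ has rank $2$, a cusp when $\mathrm{II}_q$ has rank $1$ (equivalently $q$ lies on the parabolic curve $\Pi=X\cap\{\det(\partial^2 F)=0\}$), and a tacnode ($A_3$) when in addition the cubic term degenerates along $\Ker\mathrm{II}_q$. Combined with Lemma \ref{singularitiesofplanequartics}, an irreducible tangent curve of geometric genus $\le 1$ can only carry the finite list of configurations recorded there, all with double points of type $A_1,A_2,A_3$. It then remains to show that for a very general $X$ the bad configurations do not occur: for (3), any genus-$0$ tangent curve that is not three-nodal; for (4), any genus-$1$ tangent curve whose singularities are all cusps.

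The engine is a dimension count in the parameter space $\mP(\mathrm{Sym}^4 V)\cong\mP^{34}$ of quartics, fibring the relevant incidence variety over a choice of plane $H\in(\mP^3)^\vee$ together with marked points on it. A genus-$0$ tangent curve needs three double points, i.e. three independent tangency conditions; over the base $(H,p,q,r)$ of dimension $9$ this already produces a locus of dimension $34$ (the finitely many tritangent planes of a fixed $X$), whereas imposing that any one of the three points be a cusp or a tacnode adds at least one further condition, so the corresponding locus fails to dominate $\mP^{34}$ and is avoided by a very general $X$; this gives (3). For the entirely cuspidal genus-$1$ curve one would need two parabolic points $p,q$ with $T_pX=T_qX$, i.e. an actual self-intersection of the space curve $\Pi^\vee=\phi_{\mathrm{Gauss}}(\Pi)\subset(\mP^3)^\vee$; since a general space curve has no such double point, this cuts out a proper subvariety of $\mP^{34}$, again excluded by the very general hypothesis.

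The step I expect to be the main obstacle is making these counts rigorous, i.e. proving that the tangency, parabolic, and $A_3$ conditions are genuinely independent, so that the bad loci have the predicted, strictly smaller codimension and really fail to dominate $\mP^{34}$. The truly delicate case is the interaction of the parabolic curve $\Pi$ with the bitangent locus $C_{\mathrm{dou}}$, where a cusp and a second double point meet: this is a borderline count of expected codimension zero, and deciding whether such a configuration is forced to degenerate (to a tacnode, with the two contact points colliding) or genuinely survives requires the finer local description of $\phi_{\mathrm{Gauss}}$ along $C_{\mathrm{dou}}$ together with the branched-cover structure of $\pi_{\mathrm{dou}}$ in Proposition \ref{normalizzodouble}. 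Here I would lean on the stability results of \cite{Wi} to guarantee that the imposed conditions cut out smooth strata of the expected dimension, thereby validating the counts and completing the proofs of (3) and (4).
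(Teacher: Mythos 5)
Your treatment of (1) and (2) is correct and coincides with the paper's own (very terse) justification: the paper disposes of (1) with ``${\rm Pic}(X)=\mathbb Z$'' and of (2) by citing \cite[Lemma 2.1.4]{Wi}, which rests on exactly the fact you use, namely that $d\phi_{\mathrm{Gauss},q}\neq 0$ for every $q\in X$ (equivalently the second fundamental form never vanishes), so that a singular point $q$ of $X_p$ --- necessarily a point with $T_qX=T_pX$ --- has non-vanishing quadratic term in the local equation $F|_{T_qX}$ and hence multiplicity exactly $2$. For (3) and (4) the paper gives no argument at all: it cites \cite[Fact 1.4.8]{Wi} and \cite[Lemma 2.1.6]{Wi} respectively, so your dimension counts are a genuinely different (and more self-contained) route in principle.

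The gap is in (3) and (4), and it is precisely the step you yourself flag as ``the main obstacle'': establishing that the cuspidal and tacnodal incidence varieties have dimension $\le 33$ over \emph{every} stratum of the base, not merely the generic one. For (3) this is not a routine transversality check: the cited \cite[Fact 1.4.8]{Wi} is Chen's theorem that rational curves in the primitive class of a general $K3$ surface are nodal, whose known proofs go by degeneration exactly because the naive count must be protected against excess-dimensional strata (collinear contact points, which force a line on $X$; colliding contact points, which produce $A_{\ge 3}$ singularities; and the full list of genus-$0$ configurations $\{A_2,A_1,A_1\},\dots,\{A_6\}$, each requiring its own count --- note that Lemma \ref{singularitiesofplanequartics} as stated only lists the $g=1$ and $g=2$ cases). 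For (4) the heuristic ``a general space curve has no double points'' does not apply as stated: $C_{\mathrm{par}}^\vee$ is not a general curve in $(\mP^3)^\vee$ but the cuspidal edge of the dual surface $X^\vee$, so you must show that the locus of quartics admitting a plane tangent at two distinct parabolic points fails to dominate $\mP^{34}$, which again requires either an explicit example or the local normal forms of \cite{Wi}. Finally, your closing appeal to Proposition \ref{normalizzodouble} is circular: that proposition is proved later in the paper using the classification of tangent sections, which rests on the present statement.
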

\begin{proof}  $(1)$ follows since ${\rm{Pic}}(X)=\mathbb Z$. $(2)$ see c.f. \cite[Lemma 2.1.4]{Wi}. $(3)$ see c.f. \cite[Fact 1.4.8 p. 21]{Wi}. $(4)$ see \cite[Lemma 2.1.6]{Wi}.
\end{proof}
It is interesting to stress here that Proposition \ref{tangentcurves} $(4)$ means that if $X$ is a general quartic then there are no elliptic hyperplane sections with two cusps. This should be read together with Lemma \ref{singularitiesofplanequartics}  $(5)$. This leads to a full classification of the possible hyperplane sections
 $$
 X_p=X\cap T_pX,\,\, p\in X.$$
\begin{prop}\label{classification of tangentsurves} Let $X$ be a very general quartic $X\subset\mP^3$. Let $p\in X$ and $X_p=T_pX\cap X$. The couple $(X_p,p)$ is one of the following types:

\begin{enumerate}[{(1)}]
\item general case: $g(X_p)=2$ and $X_p$ has only one node.\\
 \begin{center}\includegraphics[width=.2\linewidth, height=2cm]{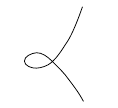}\end{center}
\bigskip

\item simple parabolic point: $g(X_p)=2$ and $X_p$ has only one cusp\\
\begin{center}\includegraphics[width=.2\linewidth, height=1.5cm]{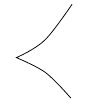}\end{center}
\bigskip

\item simple Gauss double point: $g(X_p)=1$ and $X_p$ has only two nodes\\
\begin{center}\includegraphics[width=.16\linewidth, height=1.5cm]{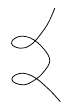}\end{center}
\bigskip

\item parabolic Gauss double point:  $g(X_p)=1$ and $X_p$ has a cusp on $p$ and a node on another point $p'\neq p$.\\\begin{center}\includegraphics[width=.15\linewidth, height=1.5cm]{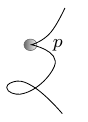}
\end{center}
\bigskip

\item dual to parabolic Gauss double point:  $g(X_p)=1$ and $X_p$ has a cusp on $p'\neq p$ and a node on $p$.\\
\begin{center}
\includegraphics[width=.14\linewidth, height=1.5cm]{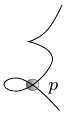}
\end{center}
\bigskip

\item Gauss swallowtail: $g(X_p)=1$ and $X_p$ has one  tacnode.\\
\begin{center}
\includegraphics[width=.18\linewidth, height=1.5cm]{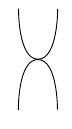}
\end{center}
\bigskip

\item Gauss triple point: $g(X_p)=0$ and $X_p$ has three distinct nodes\\
\begin{center}
\includegraphics[width=.18\linewidth, height=1.5cm]{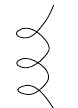}
\end{center}
\end{enumerate}
\end{prop}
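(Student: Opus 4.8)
The plan is to reduce everything to the two preparatory results just proved and to the arithmetic genus of a plane quartic. First I would observe that, by Proposition \ref{tangentcurves}(1), the tangent section $X_p=T_pX\cap X$ is an irreducible plane quartic inside the plane $T_pX\cong\mP^2$, hence has arithmetic genus $p_a(X_p)=3$. Since $T_pX$ is the tangent plane at $p$, the section $X_p$ is singular at $p$ of multiplicity at least $2$; by Proposition \ref{tangentcurves}(2) (the second fundamental form is nonzero everywhere) this multiplicity is exactly $2$, so $p$ is an honest double point of $X_p$. Writing $g=g(X_p)$ for the geometric genus and $\delta$ for the total delta-invariant of the singularities, the genus formula gives $g=3-\delta$, and the double point at $p$ forces $\delta\geq 1$, whence $g\in\{0,1,2\}$. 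This already confines the discussion to three cases.

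Next I would run the case analysis on $g$. If $g=2$ then $\delta=1$, so $X_p$ has a unique singular point, which must be $p$; by Lemma \ref{singularitiesofplanequartics} (genus-$2$ case) it is a node or a cusp, giving types (1) and (2). If $g=1$ then $\delta=2$, and Lemma \ref{singularitiesofplanequartics} (genus-$1$ case) lists exactly five possibilities: a triple point, a tacnode, two nodes, a node and a cusp, or two cusps. The triple point is excluded because every singularity of $X_p$ has multiplicity $2$ (Proposition \ref{tangentcurves}(2)), and the two-cusps configuration is excluded by Proposition \ref{tangentcurves}(4). This leaves a tacnode (type (6)), two nodes (type (3)), or one node together with one cusp. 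In the last configuration $p$ is necessarily one of the two singular points, so one distinguishes according to whether $p$ carries the cusp and the remaining point $p'\neq p$ the node (type (4)), or $p$ carries the node and $p'$ the cusp (type (5)).

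Finally, if $g=0$ then $\delta=3$; since $X_p$ is then a rational tangent curve, Proposition \ref{tangentcurves}(3) forces all of its singularities to be nodes, and Lemma \ref{singularitiesofplanequartics} bounds their number by three, so $X_p$ has exactly three nodes, one of which is $p$. This is type (7). Collecting the cases yields the seven types in the statement.

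The only genuinely delicate steps are the two local ones: verifying that the tangency point $p$ is always a double point of multiplicity exactly two — for which the nonvanishing of the second fundamental form in Proposition \ref{tangentcurves}(2) is essential — and, in the mixed node--cusp configuration, reading off from the local analytic model at $p$ (equivalently, from the rank of the second fundamental form at $p$) whether $p$ is the node or the cusp, which is exactly what separates the \emph{parabolic} from the \emph{dual to parabolic} Gauss double point. The remaining content is purely the numerical bookkeeping furnished by the relation $g=3-\delta$ together with the exclusion results already available for a general, indeed very general, quartic.
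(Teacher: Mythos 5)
Your argument is correct, but it is a genuinely different route from the paper's: the paper disposes of this Proposition by a bare citation to \cite[Proposition 2.1.7]{Wi} (``have a glance to the pictures''), whereas you actually derive the classification from the two preparatory results stated just before it. Your scheme --- irreducibility plus $p_a=3$, the genus formula $g=3-\delta$, the observation that $p$ is a point of multiplicity exactly $2$ on $X_p$ because the second fundamental form is nowhere zero (so $\delta\geq 1$ and $g\in\{0,1,2\}$), then Lemma \ref{singularitiesofplanequartics} to enumerate configurations in each genus and Proposition \ref{tangentcurves}(2),(3),(4) to strike the triple point, the non-nodal rational curves and the bicuspidal elliptic curves --- is sound, and it has the merit of making visible exactly which generality hypotheses are used where (for instance, that the two-cusp case is the only one excluded by genuine very-generality rather than by local multiplicity considerations). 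Two small points you leave implicit but which follow from your own $\delta$-bookkeeping and are worth a line: in the tacnode case the tacnode is necessarily \emph{at} $p$ (otherwise $p$ would contribute a further $\delta\geq 1$ and force $g=0$), and likewise in types (3)--(5) and (7) the point $p$ must be one of the listed double points. Note also that the item ``point of multiplicity $3$'' in the genus-$1$ list of Lemma \ref{singularitiesofplanequartics} is in fact already incompatible with $g=1$ on $\delta$-grounds ($\delta\geq 3$ for a triple point), so your exclusion of it via Proposition \ref{tangentcurves}(2) is doubly safe. In short: correct, and more self-contained than what the paper offers.
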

\begin{proof} The reader should have a glance to the pictures of \cite[Proposition 2.1.7]{Wi}. 
\end{proof}

By Proposition \ref{classification of tangentsurves} and by the calculation of the Milnor numbers of nodes, cusps, and tacnodes it follows the full classification of the 
singular points of the image $S^{\vee}:=\phi_{\rm{Gauss}}(S)$.

\begin{prop}\label{doublepointsgaussimage} Let $p^{\vee}:=\phi_{\rm{Gauss}}(p)$. Then ${\rm{mult}}_{p^{\vee}}S^{\vee}=2$ if $p$ is a simple parabolic point or a simple Gauss double point and ${\rm{mult}}_{p^{\vee}}S^{\vee}=3$ if $p$ is a parabolic Gauss double point, dual to parabolic Gauss double point, a Gauss swallowtail, or a Gauss triple point.
\end{prop}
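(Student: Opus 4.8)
The plan is to reduce the statement to the additive identity
\[
\mult_{p^\vee}S^\vee=\sum_i \mu_i,
\]
where the $\mu_i$ are the Milnor numbers of the singular points of $X_p=T_pX\cap X$, and then to read off the six cases from Proposition \ref{classification of tangentsurves}. First I would recall that for $q\in X$ the section $T_pX\cap X$ is singular at $q$ exactly when $T_qX=T_pX$; hence the singular points $q_1,\dots,q_k$ of $X_p$ are precisely the contact points of the plane $H:=T_pX$, and $\phi_{\rm{Gauss}}(q_i)=p^\vee$ for every $i$. Since $X$ is smooth with $\Pic(X)=\mZ$, its Gauss map is birational onto $S^\vee$ by reflexivity, so near $p^\vee$ the surface $S^\vee$ is the union of $k$ distinct two-dimensional analytic branches $B_1,\dots,B_k$, where $B_i$ is the Gauss image of a small neighbourhood of $q_i$. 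As $S^\vee$ is reduced and purely two-dimensional, multiplicity is additive over these branches, so $\mult_{p^\vee}S^\vee=\sum_i \mult_{p^\vee}B_i$.

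Next I would compute each $\mult_{p^\vee}B_i$ in a local Legendre normal form. Writing $X$ near $q_i$ as a graph $z=f(x,y)$ with $H=\{z=0\}$ and $\nabla f(0)=0$, an affine chart of the Gauss map reads $(x,y)\mapsto(f_x,f_y,\,xf_x+yf_y-f)$, whose image is $B_i$. By Proposition \ref{tangentcurves} and Proposition \ref{classification of tangentsurves} the singularity of $X_p=\{f=0\}$ at $q_i$ is a node, a cusp, or a tacnode, with normal forms $f=xy$, $f=x^2+y^3$, $f=x^2+y^4$. A direct elimination of $(x,y)$ then gives for $B_i$ the respective local equations $w=uv$, $(w-\tfrac14 u^2)^2=\tfrac{4}{27}v^3$, and $(w-\tfrac14 u^2)^3=\tfrac{27}{256}v^4$; these describe a smooth point, a cuspidal edge, and a swallowtail, of multiplicity $1$, $2$, $3$. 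These three numbers are exactly the Milnor numbers $\mu(A_1)=1$, $\mu(A_2)=2$, $\mu(A_3)=3$ of the three singularity types, which proves the displayed identity.

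Finally I would read off the six cases from Proposition \ref{classification of tangentsurves}. A simple parabolic point has a single cusp, so $\mult_{p^\vee}S^\vee=2$; a simple Gauss double point has two nodes, so the value is $1+1=2$; a parabolic Gauss double point (cusp and node) and its dual (node and cusp) each give $2+1=3$; a Gauss swallowtail has a single tacnode, giving $3$; and a Gauss triple point has three nodes, giving $1+1+1=3$. This is exactly the asserted classification. I expect the main obstacle to lie in the branch analysis: one must verify that the $k$ contact points really produce $k$ distinct two-dimensional germs of $S^\vee$ at $p^\vee$, so that the multiplicities add, and that the Gauss image of each neighbourhood has the stated normal form. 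Both points are handled by the birationality of the Gauss map together with the explicit Legendre parametrisation, the computation of $\mult_{p^\vee}B_i$ reducing to reading off the lowest-order term of the defining equation of $B_i$.
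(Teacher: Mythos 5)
Your proposal follows the same route the paper intends: the paper's own proof of Proposition \ref{doublepointsgaussimage} is a citation of \cite[Proposition 2.19]{Wi}, and the sentence preceding the statement makes explicit that the argument is exactly yours, namely the classification of tangent sections in Proposition \ref{classification of tangentsurves} combined with the identity ${\rm mult}_{p^{\vee}}S^{\vee}=\sum_i\mu_i$ over the singular points of $X_p$. Your branch decomposition (finiteness and generic injectivity of the Gauss map give $k$ distinct two-dimensional germs at $p^{\vee}$, over which multiplicity is additive) and the final case count are correct.

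The one step that needs justification is the normal-form reduction. Your elimination computes the Legendre image of the germs $z=xy$, $z=x^2+y^3$, $z=x^2+y^4$ \emph{exactly}, whereas the actual local equation $f$ of $X$ over $T_{q_i}X$ is only right- (or contact-) equivalent to these; such an equivalence is an analytic coordinate change in $(x,y)$ that is not induced by a projective transformation of $\mP^3$, so it changes the surface and, a priori, its Gauss image. For the node this is harmless: the Hessian of $f$ at $q_i$ is nondegenerate, so $(x,y)\mapsto(f_x,f_y)$ is already a local biholomorphism and $B_i$ is a smooth graph of multiplicity $1$ no matter what the higher-order terms are. For the cusp and the tacnode, however, you must argue that higher-order perturbations of the normal form do not change the multiplicity of the front. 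This is precisely what the local stability of the Gauss map of a general quartic provides (the cuspidal edge and the swallowtail are the stable Legendre singularities of fronts in $3$-space, so the germ of $B_i$ is ambient-equivalent to the normal-form front), and it is what the paper imports from \cite[Section 2.1]{Wi}; alternatively one may invoke Dimca's theorem that for a smooth hypersurface $X$ and a tangent hyperplane $H$ with $X\cap H$ having isolated singularities, ${\rm mult}_{[H]}X^{\vee}$ equals the sum of the Milnor numbers of $X\cap H$. With that reference (or a short stability argument) supplied, your proof is complete.
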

\begin{proof} See \cite[Proposition 2.19]{Wi}.
\end{proof}

\subsection{The parabolic curve of a general quartic surface}
There are three curves on $X$ which contain  important information on $X$. The first one is the following:

\begin{defn} We define the parabolic curve $C_{{\rm{par}}}\subset X$ to be the ramification locus of the Gauss map $\phi\colon X\to X^*$. A point $p\in X$ is called parabolic if $p\in C_{{\rm{par}}}$.
\end{defn}

\subsubsection{The asymptotic directions} We recall that locally  we can consider a neighbourhood of the point $p=(0,0,0)$ with coordinates $(x,y,z)\in \mathbb C^3$ such that locally $T_pS$ is given by $(z=0)$. 
Hence the germ of $S$ at $p$ is given by $z+q(x,y)\in \mathbb C[[x,y]]$. In particular the local analytic expression of the Hessian of $F$ is
\begin{equation}\label{Hessiano}
{\rm{Hess}}_{(S,p)}:=\begin{vmatrix} \frac{\partial^2 q}{\partial^2_x} & \frac{\partial^2 q}{\partial_x\partial_y}  \\
 \frac{\partial^2 q}{\partial_y\partial_x}& \frac{\partial^2 q}{\partial^2_y}  \end{vmatrix} (p)
\end{equation}
Then each principal direction, that is those giving the tangents to the branches of $X_p$ at $p$, is obtainable by the vector $v\in T_pS$ such that for the induced quadratic form it holds ${\rm{Hess}}_{(S,p)}(v,v)=0$. Following a notation coming from differential geometry these two directions are called {\it{asymptotic directions}} at $p$. In particular if $X_p$ has a cusp on $p$ then there exists a unique direction $v$ such that ${\rm{Hess}}_{(S,p)}(v,...)\equiv 0$ and the line $\langle v\rangle\subset T_pS$ is the direction of ramification of $\phi_{\rm{Gauss}}$ that is $d_{p}\phi_{\rm{Gauss}}(v)=0$.
\medskip

\subsubsection{Classification of the parabolic points}
By a local analysis it follows that 
\begin{prop} \label{parab} Let $X$ be a general quartic. A point $p\in X$ is parabolic iff $p$ is a cusp or a tacnode of $X_p$. The parabolic curve $C_{{\rm{par}}}$ is the zero locus of the determinant of the Hessian of $X$. 
Moreover $C_{{\rm{par}}}$ is a smooth element of the linear system $ |8h|$. In particular $C_{{\rm{par}}}$ has genus $129$.
\end{prop}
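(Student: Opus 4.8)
The plan is to establish the three assertions — the characterisation of parabolic points, the identification of $C_{\mathrm{par}}$ with the Hessian determinant, and the numerical conclusions — essentially in that order, reducing everything to a local computation in the chart introduced before the statement.

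\begin{proof}
I would work throughout in the local analytic set-up already fixed: around a point $p=(0,0,0)$ with coordinates $(x,y,z)$ in which $T_pX=(z=0)$, so the germ of $X$ is the graph $z=-q(x,y)$ with $q\in\mathbb{C}[[x,y]]$ vanishing to order at least $2$. The Gauss map $\phi_{\mathrm{Gauss}}$ sends a point to its tangent plane, and in these coordinates its differential at $p$ is read off from the second-order part of $q$, i.e. from the $2\times2$ symmetric matrix of second partials appearing in \eqref{Hessiano}. First I would verify that $p$ is a ramification point of $\phi_{\mathrm{Gauss}}$ precisely when $d_p\phi_{\mathrm{Gauss}}$ drops rank, which by the explicit form of the differential is exactly the condition $\mathrm{Hess}_{(X,p)}=0$; this is the standard identification of the ramification locus of the Gauss map of a surface with the vanishing of the Hessian. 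Since by Proposition \ref{tangentcurves}(2) the second fundamental form is nonzero at every $p\in X$, the matrix is never identically zero, so its determinant vanishing means it has rank exactly $1$: there is then a unique asymptotic direction $v$ with $\mathrm{Hess}_{(X,p)}(v,\cdot)\equiv0$, and $\langle v\rangle$ is the direction killed by $d_p\phi_{\mathrm{Gauss}}$. Geometrically a rank-one second fundamental form forces the two asymptotic directions of $X_p$ at $p$ to coincide, which by the local analysis of tangent curves means $p$ is either a cusp or a tacnode of $X_p$ — and conversely these are exactly the singularity types whose two branch-tangents coincide. This gives the first two sentences of the statement; combined with the classification in Proposition \ref{classification of tangentsurves}, the parabolic points are precisely those of types (2), (4), (5), (6).

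Next I would identify $C_{\mathrm{par}}$ globally with the zero locus of the determinant of the full $4\times4$ Hessian of $F$. The local computation shows that the intrinsic condition ``$\det$ of the second fundamental form vanishes'' is cut out, in each chart, by the determinant of the Hessian of the defining equation restricted to the tangent plane; the classical fact here is that along $X$ the vanishing of the bordered/full Hessian $\det(\partial^2 F/\partial x_i\partial x_j)$ agrees with this restricted determinant up to a nonvanishing factor. Hence $C_{\mathrm{par}} = X\cap V(\det\mathrm{Hess}(F))$.

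For the numerics: $F$ has degree $4$, so each second partial $\partial^2 F/\partial x_i\partial x_j$ has degree $2$, and the $4\times4$ determinant has degree $8$. Therefore $V(\det\mathrm{Hess}(F))$ is a surface of degree $8$, and its intersection with $X$ lies in the linear system $|8h|$ where $h$ is the hyperplane class, as claimed. Smoothness of $C_{\mathrm{par}}$ for very general $X$ I would get from a Bertini-type genericity argument: the Hessian hypersurface moves in a sufficiently ample family as $F$ varies, so for $X$ very general the intersection $X\cap V(\det\mathrm{Hess}(F))$ is smooth away from any forced base behaviour. Finally, the genus follows from adjunction on the K3 surface $X$: for a smooth curve $D\in|8h|$ one has $D^2 = (8h)^2 = 64\,h^2 = 64\cdot 4 = 256$ and $2g-2 = D^2 + D\cdot K_X = 256 + 0 = 256$, whence $g(C_{\mathrm{par}}) = 129$.

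The main obstacle I expect is the smoothness claim rather than the genus: one must rule out that the Hessian section acquires singularities along $X$ precisely at the higher parabolic strata (the points of types (4)--(6), where by Proposition \ref{doublepointsgaussimage} the multiplicity of $S^\vee$ jumps to $3$). The delicate point is to show that even at a tacnodal or parabolic-double point of $X_p$ the scheme $X\cap V(\det\mathrm{Hess}(F))$ remains smooth as a curve on $X$ — that the coincidence of asymptotic directions does not propagate to a singularity of the Hessian curve itself. I would handle this by a careful local expansion of $\det\mathrm{Hess}$ in the chart above at each such point, using the normal forms for cusps, tacnodes, and the parabolic double points supplied by Proposition \ref{classification of tangentsurves}, and invoking very-generality of $X$ to exclude the finitely many degenerate configurations; once smoothness is secured, the genus computation is immediate.
\end{proof}
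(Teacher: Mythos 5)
Your argument is essentially correct and, unlike the paper, actually supplies a proof: the paper disposes of Proposition \ref{parab} in two lines by observing that $C_{\mathrm{par}}$ is by definition the non-smooth locus of the Gauss morphism and then citing \cite[Proposition 2.2.4]{Wi} for everything else. What you write out --- ramification of $\phi_{\mathrm{Gauss}}$ equals degeneracy of the second fundamental form, which (the form being nonzero by Proposition \ref{tangentcurves}(2)) means rank one, hence coincident asymptotic directions, hence a cusp or tacnode of $X_p$ at $p$; identification with the restriction of the degree-$8$ hypersurface $V(\det\mathrm{Hess}(F))$; and the adjunction computation $2g-2=(8h)^2=256$ on the K3 --- is exactly the standard chain of reasoning behind the cited result, so the two routes do not genuinely differ; yours just makes the citation explicit. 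The degree and genus computations are correct.

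Two caveats. First, a factual slip: the parabolic points are those of types (2), (4) and (6) of Proposition \ref{classification of tangentsurves}, \emph{not} type (5). At a point of type (5) the curve $X_p$ has a node at $p$ (the cusp sits at the other point $p'$), so $p$ itself is not parabolic --- it is $p'$ that lies on $C_{\mathrm{par}}$. This is consistent with the statement you are proving, which requires the cusp or tacnode to be \emph{at} $p$. Second, the smoothness of $C_{\mathrm{par}}$ is only a plan in your write-up, and the Bertini phrasing is loose: $V(\det\mathrm{Hess}(F))$ is not an independently moving member of a linear system but is determined by $F$, so one cannot literally invoke Bertini; the genericity argument has to be run as a dimension count on an incidence variety over the space of all quartics (or by exhibiting a single quartic with smooth parabolic curve), with the local analysis at swallowtails and parabolic Gauss double points that you correctly identify as the delicate strata. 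Since the paper itself outsources precisely this point to \cite{Wi}, this is a gap relative to a self-contained proof rather than relative to the paper.
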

\begin{proof} We have noted above that $C_{\rm{par}}$ is the locus where the Gauss morphism is not a smooth one. Hence by the local analytic description of the Gauss map the claim follows. See c.f. \cite[Proposition 2.2.4]{Wi}.
\end{proof}

\subsection{The flecnodal curve}
The second curve inside $X$ which is useful to understand the geometry of $X$ is the one containing all the hyperflexes.
\begin{defn} A line $l\subset \mP^3$ is called a hyperflex line if the subscheme $X_{|l}\hookrightarrow l$ is supported over a unique point $p\in X\cap l$. In this case the line $l$ is called an hyperflex line of $X$ at $p$.
\end{defn}
\begin{defn} A point $p\in X$ is called a hyperflex if there exists a hyperflex line through it. We define the hyperflex curve $C_{{\rm{hf}}} \subset X$ to be the reduced scheme of hyperflexes.
\end{defn}
We sum up the result on $C_{{\rm{hf}}}$ we need:
\begin{prop} \label{iperflessssi}Let $X$ be a general quartic. Then it holds:
\begin{enumerate}[{(1)}]
\item   $C_{{\rm{hf}}}$ is irreducible;
\item if $p \in C_{{\rm{hf}}}$ is a general point then $X$ has exactly one hyperflex in $p$;
\item  $C_{{\rm{hf}}}$ has geometric genus $201$;
\item if $p\in  C_{{\rm{hf}}}$ has two distinct hyperflexes then $p$ is a singular point of $C_{{\rm{hf}}}$;
\item  $C_{{\rm{hf}}}\in |20h|$.
\end{enumerate}
\end{prop}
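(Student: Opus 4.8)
The plan is to establish the five assertions in the order (5), then the local picture giving (2) and (4), then (1), and finally (3), since the divisor class anchors every numerical computation and the behaviour near a single hyperflex controls both the generic description and the singularities. I would begin with (5). The curve $C_{\rm{hf}}$ is the classical flecnodal (or flex) curve: the locus of $p\in X$ admitting an asymptotic tangent line meeting $X$ to order $\geq 4$. In a local frame adapted to the two asymptotic directions at $p$ (the set-up preceding Proposition \ref{parab}), the condition ``some asymptotic direction gives contact $\geq 4$'' is the vanishing of the resultant, in the asymptotic variable, of the quadratic asymptotic form and the cubic third-order osculating term. Salmon's classical evaluation of this resultant shows that on a surface of degree $d$ the flecnodal locus is cut by a hypersurface of degree $11d-24$; for $d=4$ this is $20$, so $C_{\rm{hf}}\in|20h|$, which is the content of \cite{Wi}.

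Next I would carry out the local analysis yielding (2) and (4). At a general $p\in X$ the two asymptotic directions are distinct (they collide exactly along the parabolic curve $C_{\rm{par}}$ of Proposition \ref{parab}), and along each of them the contact with $X$ is generically $3$. Part (2) follows: on a dense open subset of $C_{\rm{hf}}$ exactly one of the two asymptotic directions jumps to contact $\geq 4$, so the hyperflex line is unique, and the locus where both jump is a proper subscheme. For (4), at a point carrying two \emph{distinct} hyperflex lines both asymptotic directions are flecnodal, so the defining resultant factors locally into two branches, one attached to each asymptotic direction; since the directions are distinct the branches are transverse, $C_{\rm{hf}}$ has an ordinary node there, and $p$ is singular.

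For (1) I would run a monodromy argument. Over the parameter space of smooth quartics, form the incidence variety $\{(X,[l])\mid l\text{ is a hyperflex line of }X\}$, whose fibre over $X$ is birational to $C_{\rm{hf}}$. Showing that the monodromy of this family acts transitively on a general fibre --- checked by a standard degeneration to a quartic whose hyperflex data is visibly connected --- forces $C_{\rm{hf}}$ to be irreducible for very general $X$. This is consistent with $\Pic X=\mZ\,h$, which by itself only forbids a splitting into prescribed classes and does not a priori give irreducibility.

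The main obstacle is (3). Adjunction on the K3 surface $X$, where $K_X=0$, gives for $C_{\rm{hf}}\in|20h|$ that $C_{\rm{hf}}^2=(20)^2h^2=1600$, hence $2p_a-2=1600$ and $p_a(C_{\rm{hf}})=801$. To reach $p_g=201$ I must account for a total $\delta$-invariant equal to $801-201=600$. By (4) the self-crossings of $C_{\rm{hf}}$ are the nodes at the double-hyperflex points; together with the cuspidal points, where an asymptotic line attains contact $\geq 5$ so that a branch of $C_{\rm{hf}}$ is tangent to its own asymptotic direction, these should be the only singularities. I would realise each such locus as a degeneracy locus of the jet map $J^{k}\sO_X(1)$ restricted to the asymptotic sub-bundle and compute its class by a Chern-class/Porteous computation on $X$, then sum the resulting $\delta$-contributions to obtain $600$, giving $p_g=201$. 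The delicate point is getting every multiplicity exactly right, in particular cleanly separating genuine nodes from the cuspidal and higher-contact contributions, so that the $\delta$-count is an equality rather than merely a bound.
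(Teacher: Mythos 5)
The paper itself offers no argument here --- it simply cites \cite[Subsection 2.3.3, Corollary 2.4.6]{Wi} --- so the relevant comparison is with the classical proofs. Your items (5), (2) and (4) are essentially right: Salmon's $11d-24$ gives $20$ for $d=4$, and at a point with two distinct hyperflex lines the flecnodal locus splits locally into one analytic branch per asymptotic direction, which already forces a singular point (note that (4) only asks for singularity, so you do not need the transversality of the branches that you assert without proof). The genuine gaps are in (1) and above all in (3). For (1) the monodromy strategy is legitimate but you never exhibit the degeneration that is supposed to make the hyperflex data ``visibly connected,'' so nothing is actually proved. For (3), the adjunction computation $p_a=801$ and $\delta=600$ is correct arithmetic, but the entire content of the statement is the evaluation of $\delta$, which you defer to an unexecuted Porteous computation; worse, your proposed inventory of singularities includes ``cuspidal points where an asymptotic line attains contact $\geq 5$,'' which is vacuous for a quartic: a line not contained in $X$ meets it in a scheme of length exactly $4$, so there is no contact of order $5$. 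The $\delta$-count as you set it up therefore cannot be completed as described.

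There is a much shorter route to (1) and (3) using objects already present in the paper. The hyperflex lines form the curve $B_{\rm{hf}}\subset S$, which by Proposition \ref{ilrivestimento doppio ramificato} lies in $|2H_{\mathbb G|S}|$ and is the branch locus of the double cover $\pi\colon Y\to S$; since $Y$ and $S$ are smooth, $B_{\rm{hf}}$ is smooth, and being a smooth member of an ample system on a surface it is connected, hence irreducible. The map $B_{\rm{hf}}\to C_{\rm{hf}}$, $[l]\mapsto p$ where $X_{|l}=4p$, is surjective and, by your item (2), birational; this gives (1) at once and identifies $B_{\rm{hf}}$ with the normalisation of $C_{\rm{hf}}$. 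Adjunction on $S$, with $K_S=3H_{\mathbb G|S}+\sigma$ ($\sigma$ torsion) and $H_{\mathbb G|S}^2=\deg S=40$ from Theorem \ref{formulae} and Lemma \ref{chowclass}, yields $2g-2=2H_{\mathbb G|S}\cdot 5H_{\mathbb G|S}=400$, i.e.\ $g(C_{\rm{hf}})=201$, with no need to classify the singularities of $C_{\rm{hf}}$ on $X$ at all.
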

\begin{proof} See c.f. \cite[Subsection 2.3.3]{Wi} and c.f. \cite[Corollary 2.4.6]{Wi}.
\end{proof}

\subsubsection{Swallowtail points, parabolic curve and the flecnodal curve}
On the parabolic curve $C_{{\rm{par}}}$ there is unicity of asymptotic directions. This enables us to construct the subbundle $\sL\subset TX_{|C_{{\rm{par}}}}$ of asymptotic directions. By Proposition \ref{parab} we can build the standard tangent sequence $$0\to T_{C_{{\rm{par}}}}\to  TX_{|C_{{\rm{par}}}} \stackrel{\nu_{C_{{\rm{par}}}}}{\longrightarrow} N_{C_{{\rm{par}}}|X }\to 0$$ It is well-known that the sheme where  $\nu_{C_{{\rm{par}}}|\sL}$ vanishes is the one given by Gauss swallowtails; see c.f. \cite[Remark 2.2.7]{Wi}.

\begin{prop}\label{codadirondine} Every swallowtail is a simple zero of $\nu_{C_{{\rm{par}}}|\sL}$. A point $p\in C_{{\rm{par}}}$ is a Gauss swallowtail iff $p\in C_{{\rm{hf}}}$, that is set theoretically
$$
{\rm{Swallowtail}}(X)= C_{{\rm{par}}}\cap C_{\rm{hf}}
$$
 Moreover every $p\in C_{{\rm{par}}}\cap  C_{{\rm{hf}}}$ is a smooth point both of  $C_{{\rm{hf}}}$ than $C_{\rm{par}}$ and
$${\rm{mult}}_p (C_{{\rm{par}}}\cap  C_{{\rm{hf}}})=2.$$
\end{prop}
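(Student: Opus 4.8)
The plan is to settle the set-theoretic equality by a contact-order argument and then to read off all the quantitative statements from a single local normal form at a swallowtail.

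First I would prove ${\rm Swallowtail}(X)=C_{{\rm{par}}}\cap C_{{\rm{hf}}}$ set-theoretically. If $p$ is a swallowtail then $p\in C_{{\rm{par}}}$ by definition, and $X_p$ has a tacnode at $p$ (Proposition \ref{classification of tangentsurves}); a tacnode means the asymptotic line has $4$-fold contact with $X$ at $p$, i.e.\ it is a hyperflex line, so $p\in C_{{\rm{hf}}}$. Conversely, let $p\in C_{{\rm{par}}}\cap C_{{\rm{hf}}}$. By Proposition \ref{parab} the singular point of $X_p$ at $p$ is a cusp or a tacnode, and at a parabolic point the asymptotic direction is \emph{unique}, so the only line through $p$ meeting $X$ with multiplicity $\ge 3$ is the asymptotic line $\ell$. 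Writing $X_p$ locally as $x^2+(\text{higher order})=0$ with $\partial_y$ the asymptotic direction, the order of contact of $\ell$ is governed by the $y^3$-coefficient: it is a cusp ($3$-fold contact) when this coefficient is nonzero and a tacnode ($4$-fold contact) when it vanishes. Since $p\in C_{{\rm{hf}}}$ forces $4$-fold contact of $\ell$, the singularity is a tacnode and $p$ is a swallowtail.

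Next I would fix local analytic coordinates $(x,y,z)$ with $T_pX=\{z=0\}$ and present $X$ as a graph $z=f(x,y)$, $f=x^2+(\text{cubic})+(\text{quartic})+\cdots$, the asymptotic direction being $\partial_y$; the swallowtail condition is the vanishing of the $y^3$-coefficient $a_{03}$, with $a_{04}\ne 0$. Since $C_{{\rm{par}}}=\{\det{\rm Hess}\,f=0\}$ is smooth (Proposition \ref{parab}) and the linear part of $\det{\rm Hess}\,f$ at $p$ is $4a_{12}\,x$, smoothness already forces $a_{12}\ne 0$; one then finds $C_{{\rm{par}}}\colon x=\tfrac{a_{12}^2-6a_{04}}{a_{12}}\,y^2+\cdots$, so $C_{{\rm{par}}}$ is smooth and tangent to the asymptotic line $\sL_p=\{x=0\}$, which is exactly the vanishing $\nu_{C_{{\rm{par}}}|\sL}(p)=0$ recorded before the statement. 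For $C_{{\rm{hf}}}$ I would eliminate the asymptotic parameter $t=\xi/\eta$ from the pair of conditions $f_{xx}t^2+2f_{xy}t+f_{yy}=0$ (asymptotic) and $f_{xxx}t^3+3f_{xxy}t^2+3f_{xyy}t+f_{yyy}=0$ ($4$-fold contact). Using the weighting ${\rm wt}(x)=2$, ${\rm wt}(y)=1$ (adapted to $x\sim y^2$ on $C_{{\rm{par}}}$), the leading part of the resultant comes out proportional to $a_{12}^3\,x+2a_{04}(8a_{04}-a_{12}^2)\,y^2$, so $C_{{\rm{hf}}}$ is likewise smooth at $p$ (nonzero linear part $a_{12}^3x$) and tangent to $\sL_p$; in particular $p$ carries a single hyperflex, consistent with Proposition \ref{iperflessssi}.

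Both smooth branches being tangent to $\{x=0\}$, we get ${\rm mult}_p(C_{{\rm{par}}}\cap C_{{\rm{hf}}})\ge 2$, with equality iff their second-order coefficients differ; that difference equals $\tfrac{(a_{12}^2-4a_{04})^2}{a_{12}^3}$, nonzero precisely when $a_{12}^2\ne 4a_{04}$, whence ${\rm mult}_p=2$. The very same non-degeneracy yields the simple-zero claim: parametrizing $C_{{\rm{par}}}$ by $y=s$ and computing the null direction of ${\rm Hess}\,f$ along it, the slope of $\sL$ relative to $T_{C_{{\rm{par}}}}$ is $\tfrac{3(4a_{04}-a_{12}^2)}{a_{12}}\,s+O(s^2)$, a simple zero of $\nu_{C_{{\rm{par}}}|\sL}$. \textbf{The main obstacle} is the flecnodal computation: obtaining the local equation of $C_{{\rm{hf}}}$ forces one to eliminate the asymptotic parameter from the joint vanishing of the second and third fundamental forms and to extract the correct leading weighted term, the weighting being what makes the $x$-term and the $y^2$-term comparable; this is the delicate bookkeeping. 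It remains to justify that $a_{12}^2\ne 4a_{04}$ at \emph{every} swallowtail of a very general $X$ --- this is exactly the locus where the osculating parabolas of $C_{{\rm{par}}}$ and $C_{{\rm{hf}}}$ merge and $X_p$ would degenerate beyond a tacnode, excluded by Proposition \ref{classification of tangentsurves}. As a global check, $C_{{\rm{par}}}\cdot C_{{\rm{hf}}}=8h\cdot 20h=640$, so ${\rm mult}_p=2$ predicts $320$ swallowtails, matching the count of zeros of $\nu_{C_{{\rm{par}}}|\sL}$ on $C_{{\rm{par}}}$.
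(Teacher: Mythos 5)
Your argument is correct, and it is genuinely different from what the paper does: the paper's ``proof'' of this proposition is a bare citation to \cite[Section 2.4]{Wi}, whereas you give a self-contained verification. I checked the key computations and they are right: with $f=x^2+\sum a_{ij}x^iy^j$ and $a_{03}=0$, the linear part of $\det{\rm Hess}f$ is $4a_{12}x$, the parabolic branch is $x=\frac{a_{12}^2-6a_{04}}{a_{12}}y^2+\cdots$, eliminating the asymptotic parameter from the second and third fundamental forms gives the flecnodal branch $a_{12}^3x+2a_{04}(8a_{04}-a_{12}^2)y^2+\cdots$, the difference of the two $y^2$-coefficients is $(a_{12}^2-4a_{04})^2/a_{12}^3$, and the slope of $\sL$ against $T_{C_{{\rm{par}}}}$ is $\frac{3(4a_{04}-a_{12}^2)}{a_{12}}s+O(s^2)$; so all three conclusions (the set-theoretic identity, smoothness and tangency of both curves with ${\rm mult}_p=2$, and the simple zero of $\nu_{C_{{\rm{par}}}|\sL}$) reduce to the single nondegeneracy $a_{12}^2\neq 4a_{04}$, which you correctly recognize as the condition that $X_p$ has an honest tacnode ($A_3$) rather than an $A_{\geq 4}$ point, excluded by Proposition \ref{classification of tangentsurves}. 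Two small remarks: the side condition ``$a_{04}\neq 0$'' you state when setting up the normal form is neither the right tacnode condition nor needed anywhere (the relevant inequality is $a_{12}^2\neq 4a_{04}$, which holds even if $a_{04}=0$); and your derivation of $a_{12}\neq 0$ leans on the smoothness of $C_{{\rm{par}}}$ from Proposition \ref{parab}, which is legitimate since that smoothness is an input here rather than something to be proved. What your route buys is transparency --- the reader sees exactly where ``very general'' enters (only through the classification of tangent sections) and gets the consistency check $8h\cdot 20h=640=2\cdot 320$ against Corollary \ref{sontrecentoventi} for free; what the paper's route buys is brevity and deference to the source where these local normal forms are worked out systematically.
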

\begin{proof} See c.f. \cite[Section 2.4 p. 39]{Wi}.
\end{proof}

\begin{cor}\label{sontrecentoventi} There are 320 Gauss swallowtails.
\end{cor}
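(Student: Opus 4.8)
The plan is to compute the total intersection number $C_{\rm{par}} \cdot C_{\rm{hf}}$ on the smooth quartic surface $X$ and then to redistribute it among the swallowtail points using the purely local information already established. First I would recall from Proposition \ref{parab} that $C_{\rm{par}} \in |8h|$ and from Proposition \ref{iperflessssi} that $C_{\rm{hf}} \in |20h|$, where $h$ denotes the hyperplane class on $X$. Since $X$ is a quartic surface we have $h^2 = \deg X = 4$, so the intersection product computed on the smooth surface $X$ gives
\begin{equation}
C_{\rm{par}} \cdot C_{\rm{hf}} = (8h)\cdot(20h) = 160\, h^2 = 640.
\end{equation}

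Next I would invoke Proposition \ref{codadirondine}, which identifies the set-theoretic intersection $C_{\rm{par}} \cap C_{\rm{hf}}$ with the swallowtail locus $\mathrm{Swallowtail}(X)$, asserts that every such point is a smooth point of both curves, and computes the local intersection multiplicity there to be exactly $2$. Because $X$ is smooth and $C_{\rm{par}}$, $C_{\rm{hf}}$ are effective divisors on it, the global intersection number $640$ equals the sum of the local intersection multiplicities taken over the (finitely many) points of $C_{\rm{par}} \cap C_{\rm{hf}}$. Since by Proposition \ref{codadirondine} each such point is a swallowtail contributing exactly $2$, and there are no further contributions, the number of swallowtails is $640/2 = 320$.

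The only delicate point is the passage from the scheme-theoretic intersection to the naive count: one must know both that every point of $C_{\rm{par}} \cap C_{\rm{hf}}$ is a swallowtail (and conversely) and that the local intersection multiplicity is uniformly $2$ rather than higher. This is exactly the content of Proposition \ref{codadirondine}, so no separate argument is required; in particular there is no need to analyze possible tangential or higher-order contacts between the two curves, which would otherwise be the main obstacle to a clean division by $2$.
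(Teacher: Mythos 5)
Your argument is correct. The paper itself offers no proof beyond a citation of \cite[Proposition 2.4.5]{Wi}, so your write-up supplies the actual computation: since $C_{\rm{par}}\in|8h|$ (Proposition \ref{parab}), $C_{\rm{hf}}\in|20h|$ (Proposition \ref{iperflessssi}), and $h^2=4$ on the quartic, the intersection number is $640$; Proposition \ref{codadirondine} identifies the support of $C_{\rm{par}}\cap C_{\rm{hf}}$ with the swallowtail locus, guarantees the intersection is proper (both curves smooth there, finite intersection), and pins the local multiplicity at exactly $2$, whence $640/2=320$. This is almost certainly the computation underlying the cited result, so in substance you are reconstructing the reference's argument rather than finding a new one; the benefit is that your version is self-contained given the propositions already stated in the paper, and it makes explicit which inputs (the divisor classes and the uniform local multiplicity $2$) the count actually depends on.
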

\begin{proof} See c.f. \cite[Proposition 2.4.5]{Wi}.
\end{proof}

\subsection{The double cover curve}
By the classification of points of a general quartic surface it is natural to consider the closure of the locus of simple Gauss double points. This gives the third curve on $X$ whose importance to understand the geometry of $X$ has been recognised by many authors; see c.f. \cite{W} and the bibliography of \cite{Wi}.  We have introduced in Definition \ref{doublecurve} the double cover curve $C_{\rm{dou}}\subset X$ as the subset of points $x\in X$ such that $g(X_p)\leq 1$. By Theorem \ref{classification of tangentsurves}, as a set  $C_{\rm{dou}}$ consists exactly of simple Gauss points, parabolic Gauss double points, dual to parabolic Gauss double points, Gauss swallowtails and Gauss triple points.

\begin{prop}\label{doppioricoprimento} The subset $C_{\rm{dou}}\subset X$ is a closed irreducible subscheme of pure dimension $1$.
\end{prop}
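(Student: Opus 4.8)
The plan is to establish the three assertions — closedness, pure one-dimensionality, and irreducibility — in that order, the last being the genuinely delicate one. I would begin with closedness together with the scheme structure. The tangent hyperplane sections $\{X_p\}_{p\in X}$ form a family of plane quartics of constant arithmetic genus $p_a(X_p)=3$. Since the geometric genus is lower semicontinuous in such families (equivalently the total $\delta$-invariant $\delta(X_p)=p_a(X_p)-g(X_p)$ is upper semicontinuous), the locus
\[
C_{{\rm{dou}}}=\{p\in X\mid g(X_p)\le 1\}=\{p\in X\mid \delta(X_p)\ge 2\}
\]
is closed, and the same semicontinuity equips it with its natural scheme structure.

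For the dimension, the bound $\dim C_{{\rm{dou}}}\le 1$ is immediate from Proposition \ref{classification of tangentsurves}: the general point of $X$ is of the general type with $g(X_p)=2$, so $C_{{\rm{dou}}}\subsetneq X$ is a proper closed subset of the irreducible surface $X$. For purity I would identify the generic part of $C_{{\rm{dou}}}$ with the non-injectivity locus of the Gauss map $\phi_{{\rm{Gauss}}}\colon X\to(\mP^3)^{\vee}$, which is birational onto its image $X^{\vee}=S^{\vee}$. Indeed, by Proposition \ref{classification of tangentsurves} a general point of $C_{{\rm{dou}}}$ is a simple Gauss double point, so $X_p$ has a node at $p$ and a second node at some $p'\neq p$; a plane meeting $X$ in a node at $p'$ is tangent to $X$ there, whence $T_{p'}X=T_pX$, i.e. $\phi_{{\rm{Gauss}}}(p')=\phi_{{\rm{Gauss}}}(p)$. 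Thus, up to lower-dimensional strata, $C_{{\rm{dou}}}$ is the image of the off-diagonal part of $X\times_{X^{\vee}}X$. Every component of $X\times_{X^{\vee}}X$ has dimension at least $\dim(X\times X)-\dim X^{\vee}=1$; removing the diagonal and projecting to the first factor, which is generically finite onto $C_{{\rm{dou}}}$, shows that each component of $C_{{\rm{dou}}}$ has dimension $\ge 1$. Hence $C_{{\rm{dou}}}$ has pure dimension $1$ with no isolated points, consistently with Proposition \ref{propietanumeriche}, which realises $C_{{\rm{dou}}}$ as an effective divisor whose class is a multiple $dh$.

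The remaining, and main, obstacle is irreducibility. Let $C_{{\rm{dou}}}^{\circ}\subset C_{{\rm{dou}}}$ be the open locus of simple Gauss double points; by Proposition \ref{classification of tangentsurves} its complement consists of parabolic and dual-parabolic Gauss double points, Gauss swallowtails, and Gauss triple points, each cut out by an extra condition and hence finite, so $C_{{\rm{dou}}}=\overline{C_{{\rm{dou}}}^{\circ}}$ and it suffices to prove $C_{{\rm{dou}}}^{\circ}$ irreducible. My primary approach is a monodromy argument: form the universal double-point locus $\mathcal C\to\mP(\mathrm{Sym}^4 V)$ over the space of quartics, prove that its total space is irreducible and dominant with one-dimensional fibres, and check that the generic fibre is geometrically irreducible (the monodromy permuting the components of a fibre acts transitively); then the fibre over a very general $X$ is irreducible, which is where the very-general hypothesis enters. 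As independent corroboration I would first record that $C_{{\rm{dou}}}$ is connected: since $X$ is a K3 surface and $C_{{\rm{dou}}}$ is an effective divisor of class $dh$ with $d\ge 1$, Serre duality and Kodaira vanishing give $H^1(X,\sO_X(-C_{{\rm{dou}}}))=0$, so the structure sequence of $C_{{\rm{dou}}}$ yields $h^0(\sO_{C_{{\rm{dou}}}})=1$. The hard step is upgrading connectedness to irreducibility; the monodromy argument achieves this, and alternatively one may invoke that the covering $\pi_{{\rm{dou}}}$ of Proposition \ref{normalizzodouble} is genuinely branched — its branch points being the $320$ swallowtails of Corollary \ref{sontrecentoventi} together with the parabolic Gauss double points — so that a connected image forces $\widetilde{C_{{\rm{dou}}}}$, and hence $C_{{\rm{dou}}}$, to be irreducible.
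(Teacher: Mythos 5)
The paper offers no proof of this statement: it is quoted verbatim from \cite[Proposition 2.5.6]{Wi}, so any self-contained argument is necessarily a different route. Your treatment of closedness (upper semicontinuity of the $\delta$-invariant in the family of tangent sections) and of the upper bound $\dim C_{\rm{dou}}\le 1$ is sound in outline. The purity step, however, needs repair on two points. First, the numerical bound must be taken against the diagonal of $(\mP^3)^{\vee}\times(\mP^3)^{\vee}$, which is a local complete intersection of codimension $3$, giving components of the fibre product of dimension $\ge 4-3=1$; as written, $\dim(X\times X)-\dim X^{\vee}=4-2=2$, since the dual surface $X^{\vee}$ is $2$-dimensional, so the displayed formula is not the bound you want. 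Second, the fibre-product argument only controls the closure of the locus of points admitting a partner $p'\neq p$ with $T_pX=T_{p'}X$; it does not by itself exclude isolated points of $C_{\rm{dou}}$ at Gauss swallowtails, where by Proposition \ref{classification of tangentsurves} the section $X_p$ has a single tacnode and $p$ has no partner. Ruling those out requires the local analysis carried out in \cite{Wi}.

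The genuine gap is irreducibility. Your monodromy argument is a plan, not a proof: the assertion you propose to ``check'' --- that the monodromy acts transitively on the components of the generic fibre of the universal family --- is exactly the hard content, and no argument is given. Moreover, even granted transitivity, one only concludes that all components of $C_{\rm{dou}}$ are conjugate, not that there is one of them; combining this with connectedness (which your Kodaira-vanishing computation does correctly establish) still does not suffice, since a connected curve can be a union of conjugate components meeting one another. One must additionally show that the Stein factorisation of the universal double-point locus over the space of quartics has degree one, for instance by exhibiting a single quartic whose locus is irreducible, and this step is absent. The fallback via Proposition \ref{normalizzodouble} and Corollary \ref{sontrecentoventi} is circular within this paper's architecture: both are established downstream of Proposition \ref{propietanumeriche}, which already asserts the irreducibility of $C_{\rm{dou}}$. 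The inference itself is also off: $\widetilde{C_{\rm{dou}}}$ is the disjoint union of the normalisations of the components of $C_{\rm{dou}}$, so connectedness of the image proves nothing, and a genuinely branched $2$-to-$1$ cover $\widetilde{C_{\rm{dou}}}\to\widetilde{C^{\vee}_{\rm{dou}}}$ forces connectedness of the source only after $C^{\vee}_{\rm{dou}}$ is known to be irreducible, which again begs the question.
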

\begin{proof} See c.f. \cite[Proposition 2.5.6]{Wi}.
\end{proof}

Note that there exists a rational involution $$j_{\rm{dou}}\colon C_{\rm{dou}}\dashrightarrow C_{\rm{dou}}$$ which sends the node $p\in X_p\cap C_{\rm{dou}}$ to the other node $ p'\in X_p\cap C_{\rm{dou}}$ if $p$ is a general point of $C_{\rm{dou}}$. The computation of the degree of $C_{\rm{dou}}$ requires a certain amount of work on its image $C_{\rm{dou}}^{\vee}$ through the Gauss morphism. Using the apolarity theory we can easily see that letting ${\rm{Pol}}_p(X)$ the polar cubic surface to $X$ with respect to the point $p$, we can define the curve
$$
D_p:=\{q\in X\mid p\in T_qX\}
$$
By construction $D_p=\phi_{\rm{Gauss}}^{-1}(p^{\perp}\cap X^{\vee})$. In other words we can interpret geometrically the Gauss morphism as induced by the morphism $\mP^3\to(\mP^3)^{\vee}$ given by the sublinear system of the polar cubics. This gives

\begin{thm}\label{dovesta} The curve $C_{\rm{dou}}$ belongs to $|80h|$ while $C_{\rm{dou}}^{\vee}$ has degree $480$.
\end{thm}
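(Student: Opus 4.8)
The plan is to compute $\deg C_{\rm{dou}}^{\vee}$ first, by analysing a general plane section of the dual surface $X^{\vee}$, and then to read off the class of $C_{\rm{dou}}$ on $X$ from a projection formula. I begin by recording the numerics of the Gauss map. Since $\phi_{\rm{Gauss}}$ is given by the partial derivatives $\partial_{i}F$, which are cubics without common zeros (as $X$ is smooth), one has $\phi_{\rm{Gauss}}^{\star}\sO_{(\mP^3)^{\vee}}(1)=\sO_X(3h)$, and $\phi_{\rm{Gauss}}\colon X\to X^{\vee}$ is birational onto its image, so $\deg X^{\vee}=(3h)^2=9h^2=36$. Because $d\phi_{\rm{Gauss}}$ has rank two off the ramification curve $C_{\rm{par}}$ and $\phi_{\rm{Gauss}}$ is injective off the bitangent locus $C_{\rm{dou}}$, the map is an embedding away from $C_{\rm{par}}\cup C_{\rm{dou}}$; hence the singular locus of $X^{\vee}$ is exactly the union of the cuspidal edge $C_{\rm{par}}^{\vee}=\phi_{\rm{Gauss}}(C_{\rm{par}})$ and the nodal curve $C_{\rm{dou}}^{\vee}$.

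Next I fix a general point $p_{0}\in\mP^3$ and study the contact curve $D_{p_{0}}={\rm{Pol}}_{p_{0}}(X)\cap X\in|3h|$. The polar system is base-point-free, so by Bertini $D_{p_{0}}$ is smooth, and by adjunction on the $K3$ surface $X$ its genus is $g(D_{p_{0}})=1+\tfrac{1}{2}(3h)^2=19$. On the dual side, $\phi_{\rm{Gauss}}(D_{p_{0}})=p_{0}^{\perp}\cap X^{\vee}$ is a general plane section of $X^{\vee}$, thus a plane curve of degree $36$ with arithmetic genus $\binom{35}{2}=595$, whose only singularities are the nodes cut out by $C_{\rm{dou}}^{\vee}\cap p_{0}^{\perp}$ and the cusps cut out by $C_{\rm{par}}^{\vee}\cap p_{0}^{\perp}$, each of delta-invariant one (a general plane avoids the finitely many pinch points, triple points and swallowtails). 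Since $\phi_{\rm{Gauss}}$ is birational on $X$ and meets $C_{\rm{dou}}$ in only finitely many points along $D_{p_{0}}$, the restriction $\phi_{\rm{Gauss}}|_{D_{p_{0}}}$ is the normalisation of this plane curve, so $19=595-\delta$, giving a total of $\delta=576$ nodes and cusps.

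To split this, the number of cusps equals $\deg C_{\rm{par}}^{\vee}$; as $\phi_{\rm{Gauss}}$ maps $C_{\rm{par}}$ birationally onto the cuspidal edge, the projection formula together with Proposition \ref{parab} ($C_{\rm{par}}\in|8h|$) gives $\deg C_{\rm{par}}^{\vee}=C_{\rm{par}}\cdot 3h=24h^{2}=96$. Therefore the number of nodes is $\deg C_{\rm{dou}}^{\vee}=576-96=480$, which is the second assertion. For the first, observe that each node corresponds to a bitangent plane through $p_{0}$, that is, to an unordered pair $\{q,q'\}\subset D_{p_{0}}$ with $T_{q}X=T_{q'}X$; for general $p_{0}$ these are simple Gauss double points by Proposition \ref{classification of tangentsurves}, so $D_{p_{0}}\cap C_{\rm{dou}}$ is a transverse set of $2\cdot 480=960$ points paired two-to-one by the involution $j_{\rm{dou}}$. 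Since $\#(D_{p_{0}}\cap C_{\rm{dou}})=3h\cdot [C_{\rm{dou}}]=3\,(h\cdot C_{\rm{dou}})$, we obtain $h\cdot C_{\rm{dou}}=320$, and because $\Pic X=\mZ h$ this forces $C_{\rm{dou}}\in|80h|$.

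The delicate point, on which I would spend the most care, is the genus bookkeeping on the plane section of $X^{\vee}$: one must prove that $\phi_{\rm{Gauss}}|_{D_{p_{0}}}$ is exactly the normalisation and that a general section has no singularities beyond the transverse nodes and cusps produced by $C_{\rm{dou}}^{\vee}$ and $C_{\rm{par}}^{\vee}$. This requires controlling $\phi_{\rm{Gauss}}$ in an analytic neighbourhood of $C_{\rm{par}}\cup C_{\rm{dou}}$ and checking that the finitely many degenerate tangent sections of Proposition \ref{classification of tangentsurves} (swallowtails, triple points, and the two mixed cusp–node types) are avoided by a general $p_{0}$ and contribute nothing to $\delta$; together with the exactness of the two-to-one pairing on $D_{p_{0}}\cap C_{\rm{dou}}$, this is what makes the count rigorous.
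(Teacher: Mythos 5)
Your proposal is correct, and it supplies an actual argument where the paper gives none: the printed proof of Theorem \ref{dovesta} is only a pointer to \cite[Proposition 2.5.20, Corollary 2.5.21]{Wi}. Your route --- degenerating the count to a general plane section of $X^{\vee}$ --- is the classical one, and it is precisely the route the paper sets up in the sentences immediately preceding the statement, where $D_p=\phi_{\rm{Gauss}}^{-1}(p^{\perp}\cap X^{\vee})$ and the polar-cubic interpretation of the Gauss map are introduced; so in effect you reconstruct the computation that the cited reference performs. The numerics all check: $\phi_{\rm{Gauss}}^{\star}\sO(1)=\sO_X(3h)$ and birationality of the Gauss map give $\deg X^{\vee}=36$, hence $p_a=595$ for a general plane section, while $g(D_{p_0})=1+\tfrac12(3h)^2=19$ by adjunction on the K3, so $\delta=576$; the cusp count $\deg C_{\rm{par}}^{\vee}=(8h)\cdot(3h)=96$ leaves $480$ nodes; and the $2$-to-$1$ pairing on $D_{p_0}\cap C_{\rm{dou}}$ gives $3(h\cdot C_{\rm{dou}})=960$, forcing $C_{\rm{dou}}\in|80h|$ since $\Pic X=\mZ h$. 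The points you flag as delicate are genuine but are all covered by facts the paper already quotes: birationality of $C_{\rm{par}}\to C_{\rm{par}}^{\vee}$ requires that no tangent section has two cusps, which is Proposition \ref{tangentcurves} (4); the assertion that a general plane section of $X^{\vee}$ has only transverse nodes along $C_{\rm{dou}}^{\vee}$ and ordinary cusps along $C_{\rm{par}}^{\vee}$, each of delta-invariant one and nothing else, follows from the classification of tangent sections in Proposition \ref{classification of tangentsurves} together with biduality (a general $p_0\notin X$ yields a plane nowhere tangent to $X^{\vee}$ at its smooth points, so the section is singular exactly along $p_0^{\perp}\cap\Sing X^{\vee}$); and the reducedness of $D_{p_0}\cap C_{\rm{dou}}$ follows because the polar system is base-point free and restricts to a non-constant system on $C_{\rm{dou}}$. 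With those references made explicit, your argument is a complete and self-contained proof of the statement.
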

\begin{proof} See c.f. \cite[Proposition 2.5.20, Corollary 2.5.21]{Wi}.
\end{proof}

The curve $C_{\rm{dou}}$ is singular. We sketch the local analysis necessary to understand the local geometry of $C_{\rm{dou}}$, but we stress that it requires the deep Yau-Zaslov formula which says that there are exactly $3200$ nodal rational curves inside the linear system $|\sO_X(1)|$ if $X$ is general. By Proposition \ref{classification of tangentsurves} $(7)$ we know that each one of these rational nodal curves is a tangent section with three nodes. Each node determines the tangent section and since there are only nodes then there are exactly $3$ nodes. By our previous notation this means that these 3200 nodal rational curves are exactly the hyperplane sections with Gauss triple points. This implies that there are exactly 9600 Gauss triple points. By the local analysis which uses the local stability of the Gauss morphism we have that $C_{\rm{dou}}$ and $C_{\rm{par}}$ intersect only at the 320 Gauss swallowtails, see: Corollary \ref{sontrecentoventi}, with multiplicity two and with multiplicity one at the parabolic Gauss double point, see: \cite[Proposition 2.5.15]{Wi}. Hence we obtain that there are 1920 parabolic Gauss double points and so there are also 1920 dual to a parabolic Gauss double point. The above analysis leads to the following:

\begin{prop}\label{propietanumeriche} The double cover curve $C_{\rm{dou}}\subset X$ is irreducible and it has only ordinary singularities. More precisely 
\begin{enumerate}
\item $C_{\rm{dou}}$ has a node at each point of any Gauss triple;
\item $C_{\rm{dou}}$ has a cusp at each  dual to a parabolic Gauss double point; 
\item Gauss swallowtails are smooth points of $C_{\rm{dou}}\subset X$;
\item $C_{\rm{dou}}$ is smooth at any parabolic Gauss double point.
\end{enumerate}
Finally there are precisely $9.600$ Gauss triple points and 1920 parabolic Gauss double points. In particular the genus of $C_{\rm{dou}}$ is $1281.$
\end{prop}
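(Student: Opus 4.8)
The plan is to separate the statement into three essentially independent tasks: (i) determining the local analytic type of $C_{\rm{dou}}$ at each distinguished point of $X$; (ii) counting the distinguished points; and (iii) deducing the geometric genus by adjunction on $X$ corrected by the local $\delta$-invariants. Irreducibility is already granted by Proposition \ref{doppioricoprimento}, so only the singularity structure and the numerology remain to be settled.

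For (i), the idea is to run the local analytic normal forms of the Gauss map supplied by the stability theory recalled from \cite{Wi}, reading off the branches of $C_{\rm{dou}}$ through each of the seven point types of Proposition \ref{classification of tangentsurves}. At a Gauss triple point the three nodes of $X_p$ all share the common tangent hyperplane and are exchanged under the involution $j_{\rm{dou}}$; locally two branches of $C_{\rm{dou}}$ cross transversally, producing a node, which gives (1). At a dual-to-parabolic Gauss double point the collision of a node with the ramification direction of $\phi_{\rm{Gauss}}$ forces a cuspidal branch, giving (2), while at a swallowtail and at a parabolic Gauss double point the same local computation yields a single smooth branch, giving (3) and (4). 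The substantive content here is entirely the local stability analysis, which I would import from \cite{Wi} (the normal forms around its Proposition 2.5.15) rather than redo; in particular this is what guarantees that $C_{\rm{dou}}$ has no worse singularities than the listed nodes and cusps.

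For (ii), I would first invoke the Yau--Zaslow formula (see \cite{Hu}) as recorded in the discussion above: a very general quartic carries exactly $3200$ rational members of $|\sO_X(1)|$, and by Proposition \ref{tangentcurves}(3) together with Proposition \ref{classification of tangentsurves}(7) every such member is a three-nodal tangent section whose nodes are Gauss triple points. Since each Gauss triple point determines its section uniquely through $T_pX$, the correspondence is three-to-one, giving $3\cdot 3200=9600$ Gauss triple points. To count the parabolic Gauss double points I would intersect $C_{\rm{dou}}$ with $C_{\rm{par}}$ on the K3 surface $X$: by Theorem \ref{dovesta} and Proposition \ref{parab} one has $C_{\rm{dou}}\cdot C_{\rm{par}}=(80h)\cdot(8h)=80\cdot 8\cdot 4=2560$, and by the local analysis these curves meet only along the $320$ swallowtails (Corollary \ref{sontrecentoventi}), each with multiplicity $2$, and along the parabolic Gauss double points with multiplicity $1$; subtracting $320\cdot 2=640$ leaves $1920$ parabolic Gauss double points, and dually $1920$ points of the dual type.

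For (iii), since $X$ is a smooth quartic, hence a K3 surface with $K_X=0$ and $h^2=4$, adjunction gives $p_a(C_{\rm{dou}})=1+\tfrac12(80h)^2=1+\tfrac12\cdot 25600=12801$. The geometric genus is then obtained by subtracting the sum of the local $\delta$-invariants: by (i) the only singularities are the $9600$ nodes and the $1920$ cusps, each with $\delta=1$, whereas swallowtails and parabolic Gauss double points are smooth and contribute nothing. Hence $g(C_{\rm{dou}})=12801-9600-1920=1281$. The main obstacle is step (i): everything downstream is bookkeeping once the local types are fixed, but pinning those types down rigorously requires the full local stability theory of the Gauss map and its interaction with the involution $j_{\rm{dou}}$ near the coincidence and ramification loci. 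I would treat this as the crux, leaning on the detailed computations of \cite{Wi}; the only genuinely new verification is that the intersection multiplicities used in (ii) agree with those predicted by these normal forms.
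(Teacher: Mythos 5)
Your proposal is correct and follows essentially the same route as the paper: the paper's own argument is precisely the sketch preceding the proposition (Yau--Zaslow giving $3\cdot 3200=9600$ Gauss triple points, the intersection $C_{\rm{dou}}\cdot C_{\rm{par}}=2560=2\cdot 320+1920$ distributed over swallowtails and parabolic Gauss double points) combined with a citation of the local stability analysis in \cite[Section 2.5]{Wi} for the singularity types, exactly as you propose. Your adjunction computation $g=1+\tfrac12(80h)^2-9600-1920=1281$ makes explicit the genus count the paper leaves implicit, and the numbers all check.
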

\begin{proof}It follows by the detailed analysis on the geometry of $C_{\rm{dou}}$ done in \cite [Section 2.5]{Wi}. 
\end{proof}

It also holds:
\begin{prop}\label{dualedouble} The dual curve $C^{\vee}_{\rm{dou}}$ has a three branched node in correspondence to the triplet of Gauss triple points and a cusp at each one of the point corresponding to parabolic Gauss double points. Hence it has genus 561.
\end{prop}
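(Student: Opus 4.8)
The plan is to exploit the degree two branched covering $\pi_{\rm{dou}}\colon \widetilde{C_{\rm{dou}}}\to\widetilde{C_{\rm{dou}}^{\vee}}$ furnished by Proposition \ref{normalizzodouble}, to compute $g(C_{\rm{dou}}^{\vee})$ by Riemann--Hurwitz, and to read off the singularities of $C_{\rm{dou}}^{\vee}$ from the local behaviour of $\phi_{\rm{Gauss}}$ along $C_{\rm{dou}}$. First I would recall that a general point of $C_{\rm{dou}}$ is a simple Gauss double point $p$, for which $X_p$ has two nodes $p,p'$ with $T_pX=T_{p'}X$; hence $\phi_{\rm{Gauss}}(p)=\phi_{\rm{Gauss}}(p')$ and $\phi_{\rm{Gauss}}|_{C_{\rm{dou}}}$ identifies $p$ with $p'=j_{\rm{dou}}(p)$. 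Thus $\phi_{\rm{Gauss}}|_{C_{\rm{dou}}}$ is generically two-to-one onto $C_{\rm{dou}}^{\vee}$, and on normalisations it is exactly $\pi_{\rm{dou}}$, whose deck involution is the lift $\tilde\jmath$ of $j_{\rm{dou}}$.

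The singularities of $C_{\rm{dou}}^{\vee}$ are localised over the special points of Proposition \ref{classification of tangentsurves}. Over a Gauss triple point the three nodes $p_1,p_2,p_3$ of $C_{\rm{dou}}$ (Proposition \ref{propietanumeriche}(1)) share the same tangent plane, so they collapse to a single $p^{\vee}\in C_{\rm{dou}}^{\vee}$; none of the $p_i$ is parabolic, so $\phi_{\rm{Gauss}}$ is a local isomorphism near each and the six local branches (two at every node) go to smooth arcs. The involution $\tilde\jmath$ pairs the branch through $p_i$ toward $p_j$ with the branch through $p_j$ toward $p_i$, leaving three image branches through $p^{\vee}$, pairwise transverse as one checks from the three distinct asymptotic directions; hence $C_{\rm{dou}}^{\vee}$ acquires an ordinary triple point, a three branched node, and there are $3200$ of them. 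Over a parabolic Gauss double point $p_0$ the partner $p_0'$ is the dual to parabolic Gauss double point, at which $C_{\rm{dou}}$ has a cusp (Proposition \ref{propietanumeriche}(2)), while $C_{\rm{dou}}$ is smooth at $p_0$ (Proposition \ref{propietanumeriche}(4)); since $\tilde\jmath$ glues $\tilde p_0$ to $\tilde p_0'$, there is a single point of $\widetilde{C_{\rm{dou}}^{\vee}}$ over $p_0^{\vee}$, so $C_{\rm{dou}}^{\vee}$ is unibranch there. As $p_0'$ is a node of $X_{p_0'}$ and hence not parabolic, $\phi_{\rm{Gauss}}$ is a local isomorphism at $p_0'$ and carries the cusp of $C_{\rm{dou}}$ isomorphically onto a cusp of $C_{\rm{dou}}^{\vee}$ at $p_0^{\vee}$; by Proposition \ref{propietanumeriche} these number $1920$.

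It remains to compute the genus. The branch points of $\pi_{\rm{dou}}$ are the fixed points of $\tilde\jmath$, that is the points where the two contact points of the bitangent plane coincide; these are exactly the $320$ Gauss swallowtails of Corollary \ref{sontrecentoventi}, at each of which $C_{\rm{dou}}$ is smooth (Proposition \ref{propietanumeriche}(3)) and the covering is simply ramified. Riemann--Hurwitz for the degree two map $\pi_{\rm{dou}}$ then reads
\begin{equation*}
2\,g(\widetilde{C_{\rm{dou}}})-2=2\bigl(2\,g(\widetilde{C_{\rm{dou}}^{\vee}})-2\bigr)+320,
\end{equation*}
and substituting $g(\widetilde{C_{\rm{dou}}})=1281$ from Proposition \ref{propietanumeriche} gives $2560=4\,g(\widetilde{C_{\rm{dou}}^{\vee}})+316$, whence $g(C_{\rm{dou}}^{\vee})=g(\widetilde{C_{\rm{dou}}^{\vee}})=561$.

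The main obstacle is the local analysis at the parabolic Gauss double points: the branch of $C_{\rm{dou}}^{\vee}$ must be a cusp even though $C_{\rm{dou}}$ is smooth at $p_0$ and $\phi_{\rm{Gauss}}$ is there only a fold, so the naive fold computation at $p_0$ does not by itself exhibit a cusp; the cusp has to be read off from the dual point $p_0'$, where $\phi_{\rm{Gauss}}$ is immersive and $C_{\rm{dou}}$ is genuinely cuspidal, using that $\tilde\jmath$ leaves a single branch over $p_0^{\vee}$. Making this precise, identifying the branch pairing at a triple point so that exactly three transverse branches survive, and ruling out fixed points of $\tilde\jmath$ other than the swallowtails, so that the ramification degree in Riemann--Hurwitz is exactly $320$, all rest on the local stability description of the Gauss map of \cite{Wi}.
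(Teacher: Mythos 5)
Your argument is correct and the arithmetic checks out ($2\cdot 1281-2=2560=4\cdot 561+316$), but it takes a genuinely different route from the paper, which proves nothing here and simply defers the whole statement to \cite[Section 2.7]{Wi}. Your reconstruction via the degree-two quotient and Riemann--Hurwitz is the natural one, and the branch count is right: fixed points of the lifted involution occur exactly where the two contact points coalesce, i.e.\ at tacnodal tangent sections, and since by Proposition \ref{tangentcurves} the tangent curves have no singularity of multiplicity $3$, these are precisely the $320$ Gauss swallowtails of Corollary \ref{sontrecentoventi}, at each of which $C_{\rm{dou}}$ is smooth and hence unibranch, so the ramification degree is exactly $320$. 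One caveat you should fix: you invoke Proposition \ref{normalizzodouble} to obtain the covering $\pi_{\rm{dou}}$, but in the paper Proposition \ref{normalizzodouble} is itself proved (Subsection \ref{subs.proposizionegeometrica}) \emph{using} the genus $561$ from the present Proposition, so that citation is circular as the paper is organised. What you actually need is only Lemma \ref{gaussristrettino} (the Gauss map is \'etale of degree $2$ on $C_{\rm{dou}}^{0}$) together with the universal property of normalisation, which produces $\pi_{\rm{dou}}$ with no genus input; cite that instead. The remaining local statements you defer --- that the three image branches over a tritangent plane are pairwise transverse, and that the unibranch image at a parabolic Gauss double point is a genuine cusp, correctly read off at the immersive point $p_0'$ rather than at the fold $p_0$ --- rest on the local stability analysis of \cite{Wi}, which is exactly what the paper itself cites; so your version matches the paper's level of rigor while making the singularity bookkeeping and the genus count explicit and internally verifiable.
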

\begin{proof} See \cite[Section 2.7]{Wi}.
\end{proof}

\section{The almost ruled surface}

\subsection{Numerical invariants associated to the surfaces of bitangents} We need to recall briefly some results mainly taken from \cite{T} and \cite{W}. We stress that we have set $\mP^3=\mP(V^\vee)$ and that $X\subset \mP^3=\mP(V^\vee)$ is a very general quartic surface. In particular there are no lines contained inside $X$. The next Proposition is well-known, possibly since very long time ago, but we include a proof of it because in the sequel we need analogue techniques and notation to write the ramification divisor $R(f)$.
\begin{prop}\label{lisciezza} The scheme $S\subset\mathbb G$ which parameterises bitangents to a smooth quartic surface $X\subset \mP^3$ with no lines is a smooth surface.
\end{prop}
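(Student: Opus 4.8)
The plan is to reduce the bitangent condition to a statement about binary quartics and then carry out an infinitesimal computation at each point of $S$, showing that the Zariski tangent space has dimension exactly $2$. Writing a line as $l=\mP(U)$ with $U\subset V^\vee$ two-dimensional, so that $\sQ_{[l]}=U^*$ by the universal sequences, restriction of $F$ produces a section $s_F$ of the rank-$5$ bundle $\mathrm{Sym}^4\sQ$ whose value at $[l]$ is the binary quartic $F|_l\in\mathrm{Sym}^4U^*$. Since $X$ contains no lines, $F|_l\neq 0$ for every $l$, so by Definition \ref{definizionedibitangente} a point $[l]$ lies in $S$ precisely when every root of $F|_l$ has multiplicity at least $2$; over $\mC$ this means $F|_l=q^2$ for a binary quadratic $q$, i.e. $F|_l$ lies in the image of the squaring map $\sigma\colon\mathrm{Sym}^2U^*\to\mathrm{Sym}^4U^*$, $q\mapsto q^2$. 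The two possibilities $q=M_1M_2$ (distinct factors) and $q=M^2$ correspond respectively to honest bitangents and to hyperflex lines. The locus of squares is a surface whose tangent space at $q_0^2$ is the image of $d\sigma_{q_0}=2q_0\cdot(-)$, namely $q_0\cdot\mathrm{Sym}^2U^*$, of dimension $3$ and hence of codimension $2$ in the $5$-dimensional space of quartics.

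Next I would identify $T_{[l_0]}S$. With $T_{[l_0]}\mathbb G=\mathrm{Hom}(U,V^\vee/U)$, the composite of $d(s_F)$ with the projection to the normal space $\mathrm{Sym}^4U^*/(q_0\cdot\mathrm{Sym}^2U^*)\cong\mC^2$ yields a linear map $\Psi$, and $T_{[l_0]}S=\Ker\Psi$; thus $S$ is smooth of dimension $2$ at $[l_0]$ exactly when $\Psi$ is surjective, for then $\dim\Ker\Psi=4-2=2$. Choosing coordinates $x_0,\dots,x_3$ on $\mP^3$ with $l_0=\{x_2=x_3=0\}$, a tangent vector is a pair of linear forms $(\ell_2,\ell_3)$ in $x_0,x_1$, and a first-order expansion of $F(x_0,x_1,\epsilon\ell_2,\epsilon\ell_3)$ gives $\Psi(\ell_2,\ell_3)=\overline{\ell_2\,F_{x_2}|_{l_0}+\ell_3\,F_{x_3}|_{l_0}}$, where $F_{x_j}|_{l_0}$ are binary cubics. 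The geometric input is that at each contact point $p$ the tangency $l_0\subset T_pX$ forces $F_{x_0}(p)=F_{x_1}(p)=0$, while smoothness of $X$ forces the gradient, and hence the vector $(F_{x_2}(p),F_{x_3}(p))$, to be nonzero.

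Finally I would prove surjectivity of $\Psi$ by evaluating in the two cases. In the honest-bitangent case $q_0=M_1M_2$ with distinct contact points $p_1\neq p_2$, the quotient is identified via $h\mapsto(h(p_1),h(p_2))$, so that $\Psi(\ell_2,\ell_3)=\big(\ell_2(p_1)F_{x_2}(p_1)+\ell_3(p_1)F_{x_3}(p_1),\ \ell_2(p_2)F_{x_2}(p_2)+\ell_3(p_2)F_{x_3}(p_2)\big)$; since the evaluation of $(\ell_2,\ell_3)$ at the two distinct points is free and each coefficient vector $(F_{x_2}(p_i),F_{x_3}(p_i))$ is nonzero, $\Psi$ is onto. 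In the hyperflex case $q_0=M^2$ the quotient is the $1$-jet at the single point $p$, via $h\mapsto(h(p),h'(p))$, and in the jet-coordinates $(\ell_2(p),\ell_3(p),\ell_2'(p),\ell_3'(p))$ the resulting $2\times 4$ matrix carries the nonzero block $(F_{x_2}(p),F_{x_3}(p))$ in the last two columns of its second row and zeros there in its first row, so it again has rank $2$. I expect the hyperflex case to be the main obstacle: there the contact scheme is a single fat point, so the normal-space identification and the Leibniz term appearing in the derivative functional must be tracked carefully, whereas the two-point case collapses at once to independent evaluations at $p_1$ and $p_2$. Since these two cases exhaust the points of $S$ (using that $X$ has no lines), $S$ is smooth of pure dimension $2$.
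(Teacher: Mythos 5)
Your proof is correct and is essentially the paper's own argument: the same first-order computation of the tangent space to $S$ at a bitangent line, the same case division between genuine bitangents and hyperflexes, and the same conclusion that the resulting $2\times 4$ matrix has rank $2$ because smoothness of $X$ together with tangency forces $(F_{x_2}(p),F_{x_3}(p))\neq(0,0)$ at each contact point. You merely package the computation more intrinsically, as transversality of the section $s_F$ of $\mathrm{Sym}^4\sQ$ to the cone of squares, where the paper works directly with dual numbers and explicit coordinates $u_0,\dots,u_3$ on $\mathbb G$.
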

\begin{proof} We fix a line $l\subset \mP^3$ which is bitangent to $X$. W.l.o.g. we can assume that $l:=(x_2=x_3=0)$ and that the two points in $X\cap l$ are $P=(1:0:0:0)$, $P_{\lambda}=(1:\lambda:0:0)$ where we do not assume $\lambda\neq 0$. Then 
$$
F(x_0:x_1:x_2:x_3)=x_{1}^2(x_1-\lambda x_0)^2 +x_2G(x_0:x_1:x_2:x_3)+x_3H(x_0:x_1:x_2:x_3)
$$
where $G,H\in \mathbb C[x_0,x_1,x_2,x_3]$ are homogeneous forms of degree $3$. We recall that by generality $l\not\subset X$.

We consider an open neighbourhood $U'\subset \mathbb G$ of $[l]$ and let $(u_0,u_1,u_2,u_3)$ be a regular parameterisation of $U'$ of $[l]$ inside $\mathbb G$; this means that for points $[r]$ close to $[l]$ inside $U'$ we can write
$$
r:=\{ (x_0:x_1: x_0u_0+x_1u_1:x_0u_2+x_1u_3) \mid (x_0:x_1)\in\mP^1\}\subset\mP^3.
$$
We look for conditions on the tangent vector $v:=(u_0 ,u_1,u_2, u_3)\in T_{[l]}\mathbb G$ to be inside the Zariski tangent space $(m_{S,[l]}/m_{S,[l]}^2)^{\vee}$ of $S$ at $[l]$. This means that if in $\mathbb C[x_0, x_1, u_0,u_1,u_2,u_3,\epsilon]$, where $\epsilon^2=0$, we write $$f(x_0:x_1; u_0,u_1,u_2,u_3,\epsilon)=F(x_0:x_1: \epsilon (x_0u_0+x_1u_1):\epsilon(x_0u_2+x_1u_3))$$ then it must exist a polynomial $q\in \mathbb C[x_0, x_1, u_0,u_1,u_2,u_3,\epsilon]$  of degree at most $2$ in the variables $x_0, x_1$ with $f=q^2$. Since $f(x_0 :x_1; u_0, u_1,u_2, u_3, \epsilon)=x_{1}^{2}(x_{1}-\lambda x_{0})^2 + \epsilon ( (x_0u_0+x_1 u_1)g(x_0:x_1) +( x_0u_2+x_1u_3) h(x_0:x_1))$ where $g(x_0:x_1):= G(x_0:x_1:0:0)$ and $h(x_0:x_1):= H(x_0:x_1:0:0)$ this is possible iff $x_{1} (x_{1}-\lambda x_{0})$ is a factor of  $(x_0u_0+x_1u_1)g(x_0:x_1)+(x_0u_2+x_1u_3)h(x_0:x_1)$, . We distinguish now two cases: $\lambda=0$ or $\lambda\neq 0$. If  $\lambda\neq 0$ then we obtain that $v\in (m_{S,[l]}/m_{S,[l]}^2)^{\vee}$ iff

\[
\begin{cases}
u_0g(1:0)+u_2h(1:0)=0 \\
(u_0+\lambda u_1)g(1:\lambda)+(u_2+\lambda u_3)h(1:\lambda)=0
\end{cases}
\]
The above linear system has rank $\leq 1$ iff $P$ or $P_{\lambda}$ is a singular point of $X$. If $\lambda=0$ the condition is equivalent to

\[
\begin{cases}
u_0g(1:0)+u_2h(1:0)=0 \\
u_0\frac{\partial}{\partial x_1}g(1:0)+u_1g(1:0)+u_2\frac{\partial}{\partial x_1}h(1:0)+u_3h(1:0)=0
\end{cases}
\]
and again the rank is less or equal to $1$ iff $P$ is a singular point.
\end{proof}

\subsubsection{Basic diagrams} 
We need a description of $S\subset\mathbb G$ and of its invariants. Denote by $H_{\mathbb G}$ the hyperplane section of the Pl\"ucker embedding $\mathbb G\hookrightarrow \mP(\bigwedge^2V^{\vee})$. We have the universal exact sequence, its restriction over $S$ and we stress that $\sO_\mathbb G(H_{\mathbb G})={\rm{det}}\sQ={\rm{det}}\sS$.

\noindent

Now we consider the standard conormal sequence of $X$ inside $\mP^3$:
\begin{equation}\label{conormal}
0\to \sO_X(-4)\to \Omega^1_{\mP^3|X}\to \Omega^1_X\to 0.
\end{equation}
\noindent
We partially maintain the notation of \cite{T} to help the reader to check some of our assertions. Following \cite{T} we can build the following diagram:

\begin{equation}\label{diagrammasullasuperficie}
\xymatrix{
&J_X\colon \mP(\Omega^1_X(1))\ar[r]\ar[dr]^{\rho_X}&\mP(\Omega^1_{\mP^3|X}(1))\ar[d]^{\rho'}\leftarrow \mP(\sQ_S) \ar[dr]^{\pi_S}&\\
&&X\subset\mP(V^\vee)&S&
}
\end{equation}
\noindent 
where the inclusion $J_X\colon \mP(\Omega^1_X(1))\to \mP(\Omega^1_{\mP^3|X}(1))$ is given by the sequence (\ref{conormal}) and the morphism 
$\mP(\sQ_S)\to \mP(\Omega^1_{\mP^3|X}(1))$ is the restriction over $S$ of the standard diagram:

\begin{equation}\label{diagrammasullospazio}
\xymatrix{
&&\mP(\Omega^1_{\mP^3}(1))\cong\ar[d]^{\rho}\mP(\sQ) \ar[dr]^{\pi_{\mathbb G}}&\\
&&\mP(V^\vee)&\, \,  \mathbb G&
}
\end{equation}
and $\rho'\colon \mP(\Omega^1_{\mP^3|X}(1))\to \mP(V^\vee)$ is the obvious restriction.

\subsubsection{Geometrical interpretation}
By construction the $\mP^1$-bundle $\pi_{\mathbb G}\colon \mP(\sQ)\to\mathbb G$ is the universal family of $\mathbb G$, and the $\mP^2$-bundle $\rho\colon \mP(\Omega^1_{\mP^3}(1))\to\mP^3$ is the projective bundle of the tangent directions on $\mP^3$; that is: $\rho^{-1}(p)=\mP(T_{\mP^3,p})$ where $T_{\mP^3,p}$ is the vector space given by the tangent space to $\mP^3$ at the point $p$. The isomorphism $\mP(\Omega^1_{\mP^3}(1))\cong \mP(\sQ)$ is well-known.

We denote by $N$ the divisor on $\mP(\Omega^1_{\mP^3}(1))$ and by $R$ the divisor on $\mathbb P(\sQ)$ such that 
$$
\rho_\star\sO_{\mP(\Omega^1_{\mP^3}(1))}(N)=\Omega^1_{\mP^3}(1),\, \pi_{\mathbb G\star} (\sO_{\mathbb G}(R)=\sQ
$$
Since no confusion can arise we denote by $R$ also the restriction to $\mP(\sQ_S)$ of $R$, hence $\pi_{S\star}\sO_{\mP(\sQ_S)}(R)=\sQ_S$
We also denote by $T$ the divisor on $\mP(\Omega^1_X(1))=\mP(\Omega^1_X)$ such that:

\begin{equation}\label{notazionesemplice}
\rho_{X\star}(\sO_{\mP(\Omega^1_X)}(T))=\Omega^1_X.
\end{equation}

\begin{lem}\label{projcotangent} For the $3$-fold $\mP(\Omega^1_X(1))$ it holds:
$$\mP(\Omega^1_X(1))\cong \{(p,[l])\in X\times \mathbb G\mid l\in \mP( T_{p} X) \}$$
\end{lem}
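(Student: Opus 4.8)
The plan is to obtain the asserted description by restricting to $X$ the geometric picture of the ambient bundle $\mP(\Omega^1_{\mP^3}(1))$ recorded in diagram (\ref{diagrammasullospazio}), and then cutting out, fibre by fibre, the $\mP^1$ singled out by the cotangent bundle of $X$ through the conormal sequence (\ref{conormal}).

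First I would use the identification $\mP(\Omega^1_{\mP^3}(1))\cong\mP(\sQ)$ together with $\mP(\sQ)=\{(p,[l])\in\mP^3\times\mathbb G\mid p\in l\}$ recalled above. Under it the fibre $\rho^{-1}(p)=\mP(T_{\mP^3,p})$ is the $\mP^2$ of tangent directions at $p$, and a direction $v\in\mP(T_{\mP^3,p})$ is matched with the unique line $l\ni p$ whose tangent direction at $p$ is $v$. Restricting the bundle along $X\hookrightarrow\mP^3$ gives $\mP(\Omega^1_{\mP^3|X}(1))=(\rho')^{-1}(X)$, which under the same identification is the incidence variety $\{(p,[l]):p\in X,\ p\in l\}$ of all lines passing through a point of $X$.

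Next I would twist the conormal sequence (\ref{conormal}) by $\sO_X(1)$ to produce the surjection $\Omega^1_{\mP^3|X}(1)\twoheadrightarrow\Omega^1_X(1)$; this is exactly the map inducing the closed embedding $J_X$ of diagram (\ref{diagrammasullasuperficie}). Since in the Grothendieck convention $\pi_\star\sO(1)=\mathcal{E}$ a surjection of bundles induces a closed embedding of the associated projective bundles, $\mP(\Omega^1_X(1))$ is realised as a sub-$\mP^1$-bundle of $\mP(\Omega^1_{\mP^3|X}(1))$. Dualising the surjection yields the inclusion of tangent spaces $T_{X,p}\hookrightarrow T_{\mP^3,p}$, so that the fibre of $\mP(\Omega^1_X(1))$ over $p$ is precisely the locus of those directions $v\in\mP(T_{\mP^3,p})$ lying in the subspace $\mP(T_{X,p})$, i.e. the tangent directions to $X$ at $p$.

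Finally I would translate this through the line identification of the first step: a direction $v\in\mP(T_{X,p})$ corresponds to the line $l$ through $p$ contained in $T_pX$ with tangent direction $v$, i.e. to a tangent line of $X$ at $p$, which is exactly the condition $l\in\mP(T_pX)$ of the statement. Thus $J_X$ identifies $\mP(\Omega^1_X(1))$ with $\{(p,[l])\in X\times\mathbb G\mid l\in\mP(T_pX)\}$; as both sides are $\mP^1$-bundles over $X$ and the map is a fibrewise isomorphism over $X$, this is an isomorphism of $3$-folds. The one delicate point is the bookkeeping of the projectivisation convention and of the dualisations, so that a ``$1$-dimensional quotient of $\Omega^1_X$'' is read correctly as a ``$1$-dimensional subspace of $T_{X,p}$'', i.e. as a genuine tangent direction; granting this, the rest is formal.
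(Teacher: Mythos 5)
Your argument is correct and is exactly the standard reasoning the paper has in mind: the paper's own proof is just ``Trivial since $X$ is smooth,'' and your use of the twisted conormal sequence to embed $\mP(\Omega^1_X(1))$ as the sub-$\mP^1$-bundle of tangent directions inside $\mP(\Omega^1_{\mP^3|X}(1))\cong\{(p,[l])\mid p\in X,\ p\in l\}$ is the natural way to fill in that omitted detail. Your care with the Grothendieck convention (rank-one quotients of $\Omega^1_X$ corresponding to lines in $T_{X,p}$) is precisely where the smoothness hypothesis enters, so nothing is missing.
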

\begin{proof} Trivial since $X$ is smooth.
\end{proof}

\subsubsection{Useful divisor classes}
We can relate classes which are easily seen by the geometry of $\rho_X\colon \mP(\Omega^1_X(1))\to X$ to ones which can be better seen via the morphism $\pi_S\circ J_X\colon  \mP(\Omega^1_X(1))\to S$. We think it does not create any confusion to write $R$ for the class 
$J_X^{\star}(R_{\mid \mP(\Omega ^1_{\mP^3|X})}  )   \in {\rm{Pic}}(\mP(\Omega^1_X(1))$ which comes from the divisor $R$ on $\mP(\sQ_S)$ via the inclusion 
$\mP(\sQ_S)\hookrightarrow \mP(\Omega^1_{\mP^3|X}(1))$. Finally we set ${\rm{Pic}}(\mP(\Omega^1_X(1))\ni H_X:= (\pi_S\circ J_X)^{\star}(H_{\mathbb G|S})$ and 
$h:=H_{\mP^3\mid X}\in {\rm{Pic}}(X)$.

\begin{lem}\label{relativeonX}  It holds on ${\rm{Pic}}(\mP(\Omega^1_X(1))):$
\begin{enumerate} 
\item $R\sim \rho_{X}^{\star}h$;
\item $N_{|\mP(\Omega^1_X(1))}\sim T +\rho_{X}^{\star}h$;
\item $H_X\sim T+2\rho_{X}^{\star}h$.
\end{enumerate}
\end{lem}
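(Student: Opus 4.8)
The plan is to compute everything in $\Pic(\mP(\Omega^1_X(1)))$. Since $\rho_X\colon\mP(\Omega^1_X(1))\to X$ is a $\mP^1$-bundle and $X$ is a very general quartic, hence a K3 surface with $\Pic(X)=\mZ h$, we have $\Pic(\mP(\Omega^1_X(1)))=\mZ\,T\oplus\mZ\,\rho_X^{\star}h$. Thus each of $R$, $N_{|\mP(\Omega^1_X(1))}$ and $H_X$ equals $a\,T+b\,\rho_X^{\star}h$ for integers $a,b$, and the lemma reduces to fixing these two coefficients in each case. The coefficient $a$ is the degree of the class on a fibre $F\cong\mP^1$ of $\rho_X$, on which $T_{|F}=\sO_F(1)$ and $\rho_X^{\star}h_{|F}=0$; the coefficient $b$ is then determined by a single further identification of line bundles.

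For $(1)$ I would work first on the universal line $\mP(\sQ)\to\mathbb{G}$. Over $([l],p)$ the tautological sub of $\sQ^{\vee}\hookrightarrow V^{\vee}\otimes\sO_{\mathbb G}$ is exactly the line $\ell_p\subset V^{\vee}$ representing $p$, so the tautological quotient $\sO_{\mP(\sQ)}(R)$ is canonically $\rho^{\star}\sO_{\mP^3}(1)$, an isomorphism of line bundles on all of $\mP(\sQ)$. Restricting this identity over $X$ and pulling back along $J_X$, using $\rho\circ J_X=\rho_X$, gives $R\sim\rho_X^{\star}h$ (so $a=0$, $b=1$).

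For $(2)$ I would use that the last map of the conormal sequence (\ref{conormal}) is a surjection $\Omega^1_{\mP^3|X}\twoheadrightarrow\Omega^1_X$; twisting by $\sO_X(1)$ gives $\Omega^1_{\mP^3|X}(1)\twoheadrightarrow\Omega^1_X(1)$. In Grothendieck's convention a bundle quotient produces the closed immersion $J_X\colon\mP(\Omega^1_X(1))\hookrightarrow\mP(\Omega^1_{\mP^3|X}(1))$ with the compatibility $N_{|\mP(\Omega^1_X(1))}=\sO_{\mP(\Omega^1_X(1))}(1)$, the tautological divisor of $\Omega^1_X(1)$. Finally $\mP(\Omega^1_X(1))=\mP(\Omega^1_X)$, with tautological divisors differing by the twisting bundle, so $\sO_{\mP(\Omega^1_X(1))}(1)=\sO(T)\otimes\rho_X^{\star}\sO_X(1)$ and therefore $N_{|\mP(\Omega^1_X(1))}\sim T+\rho_X^{\star}h$.

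For $(3)$, observe that $H_X$ is the pullback of the Pl\"ucker class $H_{\mathbb G}$ under the morphism $\mP(\Omega^1_X(1))\to\mathbb{G}$, $(p,[l])\mapsto[l]$ (the map written $\pi_S\circ J_X$), and $\sO_{\mathbb G}(H_{\mathbb G})=\det\sQ$. From the tautological sequence $0\to M\to\pi_{\mathbb G}^{\star}\sQ\to\sO(R)\to0$ on $\mP(\sQ)$ one gets $\pi_{\mathbb G}^{\star}\det\sQ\sim R+c_1(M)$, so after restricting to $\mP(\Omega^1_X(1))$ it remains to identify the tautological sub-line-bundle $M$. Over $(p,[l])$ its dual is $W_l/\ell_p$, with $W_l\subset V^{\vee}$ the $2$-plane of $l$; writing $T_pX=\Hom(\ell_p,U/\ell_p)=\ell_p^{\vee}\otimes(U/\ell_p)$, with $U\subset V^{\vee}$ the $3$-space of the projective tangent plane, identifies the tautological tangent direction $\lambda\subset T_pX$ with $\ell_p^{\vee}\otimes(W_l/\ell_p)$. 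Hence $M^{\vee}=W_l/\ell_p=\lambda\otimes\ell_p$ is $\sO(-T)\otimes\rho_X^{\star}\sO_X(-1)$, so $c_1(M)_{|}\sim T+\rho_X^{\star}h$ and $H_X\sim R+T+\rho_X^{\star}h\sim T+2\,\rho_X^{\star}h$; as a check, a fibre $F$ of $\rho_X$ maps isomorphically to a pencil of lines, a line of the Pl\"ucker embedding, so $H_X\cdot F=1$, confirming $a=1$. The step I expect to be delicate is exactly this twist bookkeeping: the identity $M^{\vee}=W_l/\ell_p$ is natural in the Grassmannian picture, but passing to the classes $T$ and $\rho_X^{\star}h$ requires the factor $\ell_p^{\vee}=\sO_X(1)$ coming from $T_pX\cong\Hom(\ell_p,U/\ell_p)$; omitting it would give the wrong value $H_X\sim T+\rho_X^{\star}h$. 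Parts $(1)$ and $(2)$ are formal once the tautological bundles are matched on the nose.
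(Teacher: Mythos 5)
Your proposal is correct: all three identifications (the tautological quotient of $\pi_{\mathbb G}^{\star}\sQ$ being $\rho^{\star}\sO_{\mP^3}(1)$, the compatibility of tautological divisors under the surjection $\Omega^1_{\mP^3|X}(1)\twoheadrightarrow\Omega^1_X(1)$ and the twist $\mP(\Omega^1_X(1))=\mP(\Omega^1_X)$, and the determinant computation $\pi^{\star}\det\sQ\sim R+c_1(M)$ with $M^{\vee}=W_l/\ell_p=\lambda\otimes\ell_p$) check out, including the extra factor $\ell_p^{\vee}$ you flag in part (3). The paper's own proof is just the word ``Easy,'' so your argument is precisely the standard bookkeeping the authors are implicitly invoking; there is no divergence of approach to report.
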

\begin{proof} Easy.
\end{proof}

\subsubsection{The surface of contact points as a double cover}

Inside $S$ there is the subscheme $B_{\rm{hf}}\hookrightarrow S$ which parameterises the hyperflex lines. In Proposition \ref{iperflessssi} we have recalled a description of the corresponding curve $C_{\rm{hf}}\subset X$. Using the surface of contact points $Y$; see Definition\ref{superficiebitangentipuntate}, and $B_{\rm{hf}}$ we have an important information on $S$:
\begin{prop}\label{ilrivestimento doppio ramificato}
There exists a non trivial $2$-torsion element $\sigma\in {\rm{Div}}(S)$ such that the surface of contact points $Y$ can be realised as a subscheme of $\mP(\sO_S\oplus\sO_S(\sigma+H_{\mathbb G|S}))$. The restriction of the natural projection $\mP(\sO_S\oplus\sO_S(\sigma+H_{|S}))\to S$ induces a $2$-to-$1$ cover $\pi\colon Y\to S$ branched over $B_{\rm{hf}}\in |2H_{\mathbb G|S}|$. In particular $Y$ is a smooth surface. Moreover as a class inside 
$\mP(\sQ_S)$ we have that $Y\in |2R+\pi_S^{\star}(\sigma)|$.
\end{prop}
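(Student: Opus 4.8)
The plan is to exhibit $Y$ as the zero locus of a universal square root of the quartic restricted to the bitangent lines, and then to read off the cover structure by relative duality. First I would globalise $F$: the quotient $V\otimes\sO_{\mathbb G}\to\sQ$ induces ${\rm Sym}^4V\otimes\sO_{\mathbb G}\to{\rm Sym}^4\sQ$, so $F$ yields a section $\widetilde F\in H^0(\mathbb G,{\rm Sym}^4\sQ)=H^0(\mP(\sQ),\sO(4R))$ whose divisor is the universal section $\{([l],p):p\in X\cap l\}$. By Definition \ref{definizionedibitangente}, over every $[l]\in S$ the restriction $\widetilde F_{|l}$ is twice the contact divisor, so $\mathrm{div}(\widetilde F_{|S})=2Y$ as divisors on $\mP(\sQ_S)$ (there are no vertical components, since $X$ contains no lines). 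As $\Pic(\mP(\sQ_S))=\mZ R\oplus\pi_S^{\star}\Pic(S)$ and $Y$ has relative degree $2$, one has $Y\sim 2R+\pi_S^{\star}D$ for a unique $D\in\Pic(S)$; then $2Y\sim 4R$ gives $2\pi_S^{\star}D\sim 0$, and injectivity of $\pi_S^{\star}$ forces $\sigma:=D$ to be $2$-torsion. This is the last assertion $Y\in|2R+\pi_S^{\star}\sigma|$, and it produces a section $\widetilde q\in H^0(\mP(\sQ_S),\sO(2R+\pi_S^{\star}\sigma))=H^0(S,{\rm Sym}^2\sQ_S\otimes\sO_S(\sigma))$ with $Y=\mathrm{div}(\widetilde q)$ and $\widetilde q^{2}=\widetilde F_{|S}$.

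Next I would compute $\pi_\star\sO_Y$ from $0\to\sO(-2R-\pi_S^{\star}\sigma)\to\sO\to\sO_Y\to 0$ on $\mP(\sQ_S)$. Here $\pi_{S\star}\sO=\sO_S$, $R^1\pi_{S\star}\sO=0$ and $\pi_{S\star}\sO(-2R-\pi_S^{\star}\sigma)=0$, while relative Serre duality on the $\mP^1$-bundle, with $\omega_{\mP(\sQ_S)/S}=\sO(-2R)\otimes\pi_S^{\star}\det\sQ_S$ and $\det\sQ_S=\sO_S(H_{\mathbb G|S})$, gives $R^1\pi_{S\star}\sO(-2R-\pi_S^{\star}\sigma)=\sO_S(-\sigma-H_{\mathbb G|S})$. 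Hence $\pi_\star\sO_Y=\sO_S\oplus\sO_S(-\sigma-H_{\mathbb G|S})$ (the extension splits in characteristic $0$), which is exactly the algebra of the double cover attached to $\sM:=\sO_S(\sigma+H_{\mathbb G|S})$; this embeds $Y$ in $\mP(\sO_S\oplus\sM)$ as stated. The branch divisor is the double-root locus of $\widetilde q$, i.e. the zero of its discriminant, a section of $(\det\sQ_S)^{\otimes 2}\otimes\sO_S(2\sigma)$ of class $2H_{\mathbb G|S}+2\sigma\sim 2H_{\mathbb G|S}$. Geometrically a double root means $X_{|l}$ is supported at one point, i.e. $l$ is a hyperflex line; hence the branch locus is $B_{\rm hf}$ and $B_{\rm hf}\in|2H_{\mathbb G|S}|$.

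For smoothness I would argue that $Y=\mathrm{div}(\widetilde q)$ inside the smooth threefold $\mP(\sQ_S)$, so $Y$ is singular at $([l],p)$ only if $d\widetilde q=0$ there. The fibrewise derivative is nonzero exactly when $p$ is a simple root of $\widetilde q_{|l}$, so $Y$ is smooth over $S\setminus B_{\rm hf}$; over $B_{\rm hf}$ the vertical derivative drops and, in the local model $z^{2}=s$, smoothness of $Y$ becomes equivalent to $B_{\rm hf}=Z(s)$ being a reduced smooth divisor on $S$. I expect this to be the main technical obstacle: it has to be checked through the local analytic behaviour of the Gauss map at a hyperflex line (the stability analysis of \cite{Wi}), noting in particular that a point of $C_{\rm hf}\subset X$ carrying two distinct hyperflexes, which is singular on $C_{\rm hf}$ by Proposition \ref{iperflessssi}, splits into two distinct smooth points of $B_{\rm hf}\subset S$.

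Finally, the nontriviality of $\sigma$. Since $\sO_S(B_{\rm hf})=\sO_S(2H_{\mathbb G|S})$ has the evident square root $\sO_S(H_{\mathbb G|S})$, the assertion $\sigma\neq 0$ says precisely that $Y\to S$ is governed by the \emph{other} square root $\sO_S(\sigma+H_{\mathbb G|S})$; equivalently, the two contact points of a general bitangent cannot be separated by a global section. I would establish this from the monodromy of contact points — the étale double cover $Y_{|S\setminus B_{\rm hf}}\to S\setminus B_{\rm hf}$ is connected, and its class must be pinned down inside $\Pic(S)[2]$ — relying on the classical description of $\Pic(S)$ in \cite{T} and \cite{W}. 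Together with the smoothness of $B_{\rm hf}$, this identification is where the real work lies; the class computations of the first two paragraphs are then routine.
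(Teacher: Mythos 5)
The paper itself offers no proof of this proposition beyond the citation of \cite[Proposition 3.11]{W}, so your reconstruction has to stand on its own. Its formal skeleton is correct and is the standard one: $F$ induces $\widetilde F\in H^0(\mP(\sQ_S),\sO(4R))$ with $\mathrm{div}(\widetilde F)=2Y$ (no vertical components since $X$ contains no lines), whence $Y\sim 2R+\pi_S^{\star}\sigma$ with $2\sigma\sim 0$ by injectivity of $\pi_S^{\star}$; the pushforward of $0\to\sO(-Y)\to\sO\to\sO_Y\to 0$ via relative duality correctly gives $\pi_{\star}\sO_Y=\sO_S\oplus\sO_S(-\sigma-H_{\mathbb G|S})$, hence the embedding into $\mP(\sO_S\oplus\sO_S(\sigma+H_{\mathbb G|S}))$; and the discriminant of $\widetilde q$ identifies the branch locus with $B_{\rm hf}\in|2H_{\mathbb G|S}|$.

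The gaps are exactly at the two assertions that carry real content. First, smoothness of $Y$: you correctly reduce it to $B_{\rm hf}$ being a smooth reduced divisor, but you leave that unproved; it is a genuine generality statement about $X$ requiring the local analysis you only gesture at (the numerical consistency $p_a(B_{\rm hf})=1+\tfrac12(2H_{\mathbb G|S})\cdot(2H_{\mathbb G|S}+K_S)=201$, matching the geometric genus of $C_{\rm hf}$ in Proposition \ref{iperflessssi}, is evidence but not a proof). Second, and more seriously, your proposed mechanism for $\sigma\neq 0$ does not work as stated: since $\pi\colon Y\to S$ is branched along the nonempty divisor $B_{\rm hf}$, the cover is connected for \emph{either} square root of $\sO_S(B_{\rm hf})$, so connectedness (or the monodromy of the two contact points) of the restriction to $S\setminus B_{\rm hf}$ cannot distinguish $\sO_S(H_{\mathbb G|S})$ from $\sO_S(\sigma+H_{\mathbb G|S})$. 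One needs an independent invariant separating the two candidate double covers --- for instance a cohomological computation of $\pi_{\star}\sO_Y$ against known invariants of $Y$, or the identification of $\sigma$ with the $2$-torsion class governing the connected \'etale double cover $S_X\to S$ invoked in Lemma \ref{finitezza} --- and this is precisely the part supplied by the cited analysis of \cite{W}.
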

\begin{proof} See \cite[Proposition 3.11]{W}. See also \cite{CZ}.\end{proof}

By the diagram (\ref{diagrammasullasuperficie}) and by a slight abuse of notation we obtain the basic diagram:

\begin{equation}
\label{oneuniversal&projection}
\xymatrix{ & Y \ar[dl]_{{ \rho}}
\ar[dr]^{\pi}\\
 X\subset\mP^{3} &  & S\subset\mathbb G }
\end{equation}
The fact that $Y$ is a divisor both in $\mP(\Omega^1_X(1))$ than in $\mP(\sQ_S)$ makes possible to link the geometry of $X$ to the geometry of $S$ via the one of $Y$. In particular it is noteworthy that we can try to obtain special curve on $S$ via special curves on $X$ and viceversa.
First we recall that since $X$ is general ${\rm{Pic}}(X)=[h]\mathbb Z$, where we recall that $h:=H_{\mP^3_{|X}}$. Hence 
\begin{equation}\label{picdi}
{\rm{Pic}}(\mP(\Omega^1_X(1)))=[T]\mathbb Z\oplus [R]\mathbb Z.
\end{equation}

\begin{prop}\label{sopraX}
As a class inside ${\rm{Pic}}(\mP(\Omega^1_X(1)))$ it holds that $$Y\in |6T+8R|.$$
\end{prop}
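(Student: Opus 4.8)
The plan is to compute the divisor class of $Y$ inside $\mathrm{Pic}(\mP(\Omega^1_X(1)))=[T]\mZ\oplus[R]\mZ$ by intersecting an unknown expression $Y\sim aT+bR$ with a basis of curve classes and matching the results against geometric data we already control. First I would fix notation for the intersection ring of the $\mP^1$-bundle $\rho_X\colon\mP(\Omega^1_X(1))\to X$: the relative hyperplane class $T$ satisfies the Grothendieck relation $T^2=T\cdot\rho_X^{\star}c_1(\Omega^1_X)-\rho_X^{\star}c_2(\Omega^1_X)$ (up to the normalisation fixed by (\ref{notazionesemplice})), and by Lemma \ref{relativeonX}(1) we have $R\sim\rho_X^{\star}h$, so $R^2=0$ and $R\cdot\rho_X^{\star}(\text{class})$ reduces to intersection numbers on $X$. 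Since $\mathrm{Pic}(X)=[h]\mZ$ with $h^2=4$, the only Chern data needed are $c_1(\Omega^1_X)=K_X=0$ (as $X$ is a K3) and $c_2(\Omega^1_X)=c_2(X)=24$. This pins down the intersection form completely, so the problem reduces to determining the two integers $a,b$.

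The key is to choose two test curves whose intersection with $Y$ can be read off both numerically and geometrically. The natural first test is the fibre $\ell:=\rho_X^{-1}(p)$ of a general point $p\in X$: geometrically $\ell\cong\mP(T_pX)$ parameterises tangent directions at $p$, and $Y$ meets it in the points of $\ell$ lying on bitangent lines through $p$, i.e. the contact points. By Lemma \ref{projcotangent} the fibre is identified with the pencil of lines through $p$ in $T_pX$, and $Y\cap\ell$ counts the asymptotic tangent directions at a general point, which are governed by the second fundamental form; the degree-two behaviour combined with $T\cdot\ell=1$, $R\cdot\ell=0$ (since $R=\rho_X^{\star}h$ pulls back from the base) forces $a=Y\cdot\ell$. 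The second test is $R\cdot Y$, which by the projection formula equals an intersection number on $X$ determined by how many tangent directions over a hyperplane section class lie on $Y$; pairing against $R$ kills the $bR$ term because $R^2=0$, isolating $a$ again as a consistency check, whereas pairing against $T$ (using $T^2=-24\,\rho_X^{\star}[\text{pt}]$) isolates the combination that fixes $b$.

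A cleaner route for the second coefficient is to push down to the base via $\pi_S\circ J_X$ and use the already-established class $Y\in|2R+\pi_S^{\star}\sigma|$ inside $\mP(\sQ_S)$ from Proposition \ref{ilrivestimento doppio ramificato}, together with the comparison of divisors in Lemma \ref{relativeonX}: since $H_X\sim T+2\rho_X^{\star}h=T+2R$ and $N_{|\mP(\Omega^1_X(1))}\sim T+R$, one expresses the restriction of the tautological classes from $\mP(\sQ_S)$ in terms of $T$ and $R$ and transports the relation $Y\sim 2R+\pi_S^{\star}\sigma$ across the inclusion $\mP(\sQ_S)\hookrightarrow\mP(\Omega^1_{\mP^3|X}(1))$. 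The torsion term $\sigma$ contributes via $\pi_S^{\star}\sigma$, which must be rewritten in the basis $\{T,R\}$ using $H_X\sim T+2R$ and the fact that $\sigma$ is $2$-torsion with $2\sigma\sim$ a multiple of $H_{\mathbb G|S}$; tracking this carefully yields the stated $6T+8R$.

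I expect the main obstacle to be precisely the bookkeeping of the two-torsion class $\sigma$ and of the normalisation conventions relating the tautological divisor $T$ on $\mP(\Omega^1_X(1))=\mP(\Omega^1_X)$ to the one on $\mP(\Omega^1_X(1))$ after the twist by $\sO_X(1)$: a sign or twist error there shifts the $R$-coefficient. Verifying that the computed $Y\cdot\ell$ really equals the coefficient $a$ (and matches the geometry of contact points along a fibre) and that the pushforward $\pi_{S\star}\sO(R)=\sQ_S$ is being used with the correct relative degree will be the delicate checks; once the intersection numbers $Y\cdot T\cdot R$, $Y\cdot R^2$, and $Y\cdot T^2$ are computed consistently on $X$ using $h^2=4$ and $c_2(X)=24$, solving the resulting linear system for $(a,b)$ should give $(6,8)$ directly.
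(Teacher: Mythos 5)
The paper does not actually prove this proposition; it simply cites \cite[Proposition 2.3]{T}. Your strategy of undetermined coefficients $Y\sim aT+bR$ evaluated against the intersection ring of the $\mP^1$-bundle $\rho_X\colon\mP(\Omega^1_X(1))\to X$ is therefore a legitimate self-contained alternative, and your Chern-class inputs ($K_X=0$, $c_2(X)=24$, $h^2=4$, the Grothendieck relation $T^2=-\rho_X^{\star}c_2(X)$) are the right setup. But two of your concrete steps fail. First, $R^2\neq 0$: since $R\sim\rho_X^{\star}h$ and the base is a \emph{surface}, $R^2=\rho_X^{\star}(h^2)$ is four times the fibre class, and only $R^3=0$. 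Second, and more seriously, your evaluation of $a$ is wrong. For a fibre $\ell=\rho_X^{-1}(p)$ one has $Y\cdot\ell=a$, and this counts the bitangent lines through a general $p\in X$ having $p$ as a \emph{contact point}; that is the degree of $\rho\colon Y\to X$, which is $6$ (Lemma \ref{contattoX}: the ramification of the degree-$2$ projection of the genus-$2$ normalisation of $X_p$ from its node $p$). The ``asymptotic directions governed by the second fundamental form'' are the two branch tangents of the node of $X_p$ at $p$; they are inflectional directions, not bitangent contact directions, and your ``degree-two behaviour'' would yield $a=2$ instead of $a=6$.

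The coefficient $b$ is also not actually determined by what you wrote. Since $T^2\cdot R=0$ and $T^3=-24$, pairing with $T^2$ gives $Y\cdot T^2=-24a$, which carries no information about $b$; the only monomial isolating $b$ is $Y\cdot T\cdot R=4b$, and you give no independent geometric computation of that number. Your ``cleaner route'' does not work as stated either: $\mP(\sQ_S)$ and $\mP(\Omega^1_X(1))$ are two different threefolds inside the fourfold $\mP(\Omega^1_{\mP^3|X}(1))$, neither contained in the other (their intersection is essentially $Y$ itself), so the relation $Y\in|2R+\pi_S^{\star}\sigma|$ on $\mP(\sQ_S)$ cannot be ``transported'' to a divisor class on $\mP(\Omega^1_X(1))$ without first knowing the class of $\mP(\sQ_S)$ in the ambient fourfold and performing an excess-intersection computation, which you do not set up. To complete your approach you would need an honest enumeration of $Y\cdot T\cdot R=(H_X-2R)\cdot R\cdot Y$ (using $T\sim H_X-2R$ from Lemma \ref{relativeonX}), i.e.\ of the contact points lying over a fixed hyperplane section of $X$ whose bitangent line meets a fixed line in $\mP^3$; that count is where the actual content of $b=8$ lives, and it is missing.
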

\begin{proof} See \cite[Proposition 2.3]{T}.
\end{proof}
\subsubsection{The class of $S$ in the Chow ring of $\mathbb G$}
It is quite natural to introduce the following classes inside the Chow ring $\oplus_{i=1}^{4}{\rm{CH}}^{i}(\mathbb G)$ associated to the fundamental ladder, point, line, plane, $p\in l\subset h\subset \mP^3$:
${\rm{CH}}^1(\mathbb G)\ni\sigma_{l}:=\{[m]\in\mathbb G| m\cap l\neq \emptyset \}$, 
${\rm{CH}}^2(\mathbb G)\ni\sigma_{p}:=\{[l]\in\mathbb G| p\in l\}$, ${\rm{CH}}^2(\mathbb G)\ni \sigma_{h}:=\{[l]\in \mathbb G| l\subset h\}$. It is well known that 
$$\sigma_l^2=\sigma_h+\sigma_p$$
and that the divisorial class of $\sigma_l$ is the class $H_{\mathbb G}$.
\begin{lem}\label{chowclass} The following identity holds in the Chow ring of $\mathbb G$:
$$
{\rm{CH}}^2(\mathbb G)\ni[S]=40\sigma_l^2+28\sigma_h+12\sigma_p.
$$
In particular  $\rm{deg}(S)=H_{\mathbb G}^2\cdot [S]=40.$
\end{lem}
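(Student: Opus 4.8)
The plan is to pin down $[S]\in\mathrm{CH}^2(\mathbb G)$ by its two Schubert coefficients and to read off the degree from them. Recall that $\mathrm{CH}^2(\mathbb G)$ is free of rank two, with basis $\sigma_p,\sigma_h$, intersection pairing $\sigma_p^2=\sigma_h^2=1$, $\sigma_p\cdot\sigma_h=0$, and $\sigma_l^2=\sigma_p+\sigma_h$. Writing $[S]=a\,\sigma_p+b\,\sigma_h$, the coefficients are the enumerative numbers $a=[S]\cdot\sigma_p$, the number of bitangent lines of $X$ through a general point of $\mP^3$, and $b=[S]\cdot\sigma_h$, the number of bitangent lines contained in a general plane; then $\deg S=H_{\mathbb G}^2\cdot[S]=\sigma_l^2\cdot[S]=a+b$. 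So the whole statement reduces to the two counts $a$ and $b$.

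The coefficient $b$ is immediate. A general plane $h\subset\mP^3$ meets $X$ in a smooth plane quartic $X\cap h$ (here we use that $X$ is very general). A line $\ell\subset h$ satisfies $X_{|\ell}=(X\cap h)_{|\ell}$, so $\ell$ is bitangent to $X$ exactly when it is a bitangent of the plane quartic $X\cap h$. By the classical Pl\"ucker count a smooth plane quartic has $28$ bitangents, whence $b=[S]\cdot\sigma_h=28$.

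For $a$ I would use the contact curve of tangent lines from $p$. Fix a general $p\in\mP^3$ and set $D_p:=X\cap\mathrm{Pol}_p(X)$, the curve of points $q\in X$ with $p\in T_qX$, already introduced in the excerpt: it lies in $|3h|$, is a complete intersection of type $(4,3)$, hence has degree $12$ and arithmetic genus $19$. For $q\in D_p$ the line $\overline{pq}$ lies in $T_qX$, so it is tangent to $X$ at $q$; therefore a line through $p$ is bitangent precisely when it joins two distinct points of $D_p$, i.e. is a genuine secant of $D_p$ through $p$. Projecting $D_p$ from $p$ to a plane curve $B_p$ of degree $12$, the bitangents through $p$ are exactly the honest nodes of $B_p$, while the lines from $p$ with a single contact point of order three produce the cusps of $B_p$. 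As the projection is birational onto $B_p$, the total $\delta$-invariant of $B_p$ equals $\binom{11}{2}-19=36$; subtracting the $24=n(n-1)(n-2)$ cusps (the flex, or asymptotic, lines from a general point to a surface of degree $n=4$) leaves $36-24=12$ nodes. Hence $a=[S]\cdot\sigma_p=12$.

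Combining the two counts gives $[S]=12\,\sigma_p+28\,\sigma_h=28\,\sigma_h+12\,\sigma_p$ and $\deg S=\sigma_l^2\cdot[S]=12+28=40$, as asserted. The routine parts are the complete-intersection invariants of $D_p$ and the Schubert pairing. The real content of the lemma, and the step I expect to be delicate, is the analysis of $B_p$: one must check that for very general $X$ and general $p$ the curve $D_p$ is smooth of genus $19$, that its projection from $p$ acquires only nodes and cusps, and that the number of flex lines from $p$ is exactly $24$. These facts rest on the local classification of the sections $X_p$ in Proposition \ref{classification of tangentsurves} and on the enumerative results recalled from \cite{Wi}, \cite{T} and \cite{W}, which also guarantee that these generic values are genuinely attained and so cannot conspire to be absorbed into excess intersection.
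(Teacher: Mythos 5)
Your argument is correct, and it is genuinely different from what the paper offers, because the paper offers nothing: its entire proof of this lemma is the citation ``See \cite[Lemma 3.30]{W}''. You instead give a self-contained classical derivation, and your two counts are exactly the enumerative content of the statement: $[S]\cdot\sigma_h=28$ is Pl\"ucker's count of bitangents of the smooth plane quartic $X\cap h$, and $[S]\cdot\sigma_p=12$ is Salmon's count of nodes of the apparent contour, which you recover correctly from $\delta(B_p)=\binom{11}{2}-g(D_p)=55-19=36$ minus the $n(n-1)(n-2)=24$ cusps coming from inflectional tangents (equivalently, the classical $\tfrac12 n(n-1)(n-2)(n-3)=12$). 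The genericity issues you flag are all routine for a very general $X$ and general $p$: smoothness of $D_p$ follows from Bertini applied to the base-point-free system of polars, and each bad configuration (tritangent line, hyperflex line, tangent-plus-inflectional line, tangent line of $D_p$ through $p$ beyond the expected ones) is cut out by at least three conditions on $\mathbb G$, hence sweeps out at most a surface in $\mathbb{P}^3$ that a general $p$ avoids. Your value $12$ is also independently confirmed by Lemma \ref{finitezza}, since $\deg f$ is precisely the number of bitangents through a general point.

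One point of reconciliation with the statement as printed: what you prove is $[S]=28\sigma_h+12\sigma_p$. The displayed formula, read literally, equals $68\sigma_h+52\sigma_p$ (using $\sigma_l^2=\sigma_h+\sigma_p$) and would give $\deg S=120$, contradicting the lemma's own ``in particular'' clause; the term $40\sigma_l^2$ is evidently a typo (a remnant of the degree computation $\deg S=[S]\cdot\sigma_l^2=28+12=40$), and your version is the intended and correct one.
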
 
\begin{proof}
See \cite[Lemma 3.30]{W}.
\end{proof}

\subsubsection{Numerical invariants }

\begin{thm}\label{formulae} For the surfaces $S$, $Y$ we have the formulae:
\begin{enumerate}
\item  $K_{S}=3H_{\mathbb G|S}+\sigma$, $q(S)=0$, $p_g(S)=45$, $h^1(S_X,\Omega^1_{S}) =100$, $c_2(S)=192$
\item $K_Y=\pi^{\star}(4H_{\mathbb G|S})$, $q(Y)=0$, $p_g(Y)=171$
\end{enumerate}
\end{thm}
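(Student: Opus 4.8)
The plan is to compute each invariant by reducing everything to intersection theory on the surfaces $S$ and $Y$, using the two descriptions of these surfaces already recorded: the Chow class $[S]=40\sigma_l^2+28\sigma_h+12\sigma_p$ of Lemma \ref{chowclass}, and the realization of $Y$ as a double cover $\pi\colon Y\to S$ branched over $B_{\rm{hf}}\in |2H_{\mathbb G|S}|$ from Proposition \ref{ilrivestimento doppio ramificato}. The key input for $S$ is adjunction inside $\mathbb G$: since $K_{\mathbb G}=-4H_{\mathbb G}$ and $S$ is a smooth complete intersection--type surface whose normal data is encoded by its Chow class, one writes $K_S=(K_{\mathbb G}+[S])_{|S}$, and the stated formula $K_S=3H_{\mathbb G|S}+\sigma$ identifies the adjoint class together with the $2$-torsion twist $\sigma$ coming from the double cover structure. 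For $Y$ the covering formula $K_Y=\pi^{\star}(K_S)+R_\pi$, with ramification divisor $R_\pi=\tfrac12\pi^{\star}B_{\rm{hf}}=\pi^{\star}H_{\mathbb G|S}$, gives directly $K_Y=\pi^{\star}(K_S+H_{\mathbb G|S})=\pi^{\star}(4H_{\mathbb G|S}+\sigma)$; since $\sigma$ is $2$-torsion and $\pi$ is the cover it trivializes, one obtains $K_Y=\pi^{\star}(4H_{\mathbb G|S})$ as claimed.

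Next I would compute the numerical invariants. First I would evaluate $H_{\mathbb G|S}^2=\deg S=40$ and the intersection numbers $K_S\cdot H_{\mathbb G|S}$ and $K_S^2$ by expanding $K_S=3H_{\mathbb G|S}+\sigma$ and using Lemma \ref{chowclass} together with $\sigma^2$ and $\sigma\cdot H_{\mathbb G|S}$, which are determined by the torsion nature of $\sigma$ and the self-intersection formulae on $\mathbb G$ via $\sigma_l^2=\sigma_h+\sigma_p$, $\sigma_h^2=\sigma_p^2=1$, $\sigma_h\cdot\sigma_p=0$. From $K_S^2$ and the topological Euler number $c_2(S)$ I would recover $\chi(\sO_S)$ by Noether's formula $12\chi(\sO_S)=K_S^2+c_2(S)$, and then, using $q(S)=0$ (which follows from $h^1(\sO_S)=0$, expected since $S$ is regular as a bitangent surface of a very general quartic), obtain $p_g(S)=\chi(\sO_S)-1=45$. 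The value $h^1(S,\Omega^1_S)=100$ is then read off from $c_2(S)=192$ together with the Hodge numbers via $c_2=2-4q+2p_g+h^{1,1}$, giving $h^{1,1}=192-2-2\cdot45=100$ after setting $q=0$.

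For $Y$ I would push the covering structure through cohomology: $\pi_*\sO_Y=\sO_S\oplus\sO_S(-(\sigma+H_{\mathbb G|S}))$, so that $\chi(\sO_Y)=\chi(\sO_S)+\chi(\sO_S(-\sigma-H_{\mathbb G|S}))$; the second summand I would compute by Riemann--Roch on $S$, using the already-determined intersection numbers $H_{\mathbb G|S}^2$, $K_S\cdot H_{\mathbb G|S}$, and the torsion relations for $\sigma$. Since $\pi$ is finite of degree $2$ and $S$ is regular with $H^1$ of the relevant twisted line bundle vanishing, $q(Y)=0$ follows, and then $p_g(Y)=\chi(\sO_Y)-1=171$. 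The degree--$2$ projection formula $K_Y=\pi^{\star}(4H_{\mathbb G|S})$ also lets me cross-check $K_Y^2=2\cdot(4H_{\mathbb G|S})^2=2\cdot16\cdot40$ against Noether's formula for $Y$ once $c_2(Y)$ is known, providing an independent verification of $p_g(Y)$.

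The main obstacle will be the careful bookkeeping of the $2$-torsion class $\sigma$: its self-intersection $\sigma^2$ and its pairing $\sigma\cdot H_{\mathbb G|S}$ are not forced to vanish merely because $\sigma$ is a torsion \emph{divisor} class (torsion in $\Pic$ does not imply torsion as a numerical class unless one argues so), so I must justify that the relevant intersection numbers involving $\sigma$ either vanish or are controlled by the branch data $B_{\rm{hf}}\in|2H_{\mathbb G|S}|$ and the degree of $C_{\rm{hf}}$. I expect this is handled precisely by the relation $2\sigma\sim 0$ forcing $\sigma\cdot D$ to be an integer that is simultaneously even and bounded, hence determined; the cleanest route is to compute all twisted Euler characteristics on $S$ directly through Riemann--Roch and the known class of $B_{\rm{hf}}$, thereby avoiding any delicate separate evaluation of $\sigma^2$ and instead extracting it from consistency with Noether's formula on both $S$ and $Y$.
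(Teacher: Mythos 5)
The paper itself does not prove Theorem \ref{formulae}; it simply cites \cite[pp.~41--45]{W}, so your proposal is an attempt at a self-contained computation where the paper offers none. Unfortunately it has two genuine gaps, both at the places where the real work lives. First, the derivation of $K_S=3H_{\mathbb{G}|S}+\sigma$ by ``adjunction inside $\mathbb{G}$'' does not work: $S$ has codimension $2$ in $\mathbb{G}(2,4)$, so adjunction reads $K_S=(K_{\mathbb{G}})_{|S}+\det N_{S/\mathbb{G}}$, and $\det N_{S/\mathbb{G}}$ is \emph{not} determined by the Chow class $[S]\in \mathrm{CH}^2(\mathbb{G})$ of Lemma \ref{chowclass} (nor is $S$ a complete intersection). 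Worse, $\Pic\mathbb{G}\cong\mathbb{Z}$ is torsion-free, so no restriction of ambient classes can ever produce the $2$-torsion summand $\sigma$; that summand comes from the \'etale double cover $S_X\to S$ attached to the double solid, which is precisely the construction in the cited pages of \cite{W}. Your sentence ``the stated formula identifies the adjoint class together with the $2$-torsion twist'' assumes the conclusion. Second, your numerical chain is circular: you feed $c_2(S)=192$ and $q(S)=0$ into Noether's formula and the Hodge decomposition to output $p_g(S)=45$ and $h^{1,1}=100$, but $c_2(S)$ and $q(S)$ are themselves assertions of the theorem and you compute neither. Getting $c_2(S)$ requires $c_1$ and $c_2$ of $N_{S/\mathbb{G}}$, i.e.\ again the normal bundle you have not identified; and $q(S)=0$ is genuinely delicate here, since (as recorded in the proof of Lemma \ref{finitezza}) $S$ carries an unramified double cover $S_X$ that embeds into its Albanese --- a regular surface with a highly irregular \'etale cover --- so ``expected since $S$ is regular'' is not an argument.

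Two smaller remarks. Your announced ``main obstacle'' is a non-issue: $2\sigma\sim 0$ forces $2(\sigma\cdot D)=0$ in $\mathbb{Z}$, hence $\sigma\cdot D=0$ for every divisor $D$; torsion in $\Pic$ is automatically numerically trivial, so $K_S^2=360$, $K_S\cdot H_{\mathbb{G}|S}=120$ follow at once from $\deg S=40$. Conversely, your handling of $\sigma$ in $K_Y$ is imprecise: for the double cover defined by $L=\sigma+H_{\mathbb{G}|S}$ one has $R_\pi\sim\pi^{\star}L$, not $\tfrac12\pi^{\star}B_{\mathrm{hf}}=\pi^{\star}H_{\mathbb{G}|S}$, and $\pi\colon Y\to S$ does not trivialize $\sigma$ (the cover that does is $S_X\to S$); the correct computation is $K_Y=\pi^{\star}(K_S+L)=\pi^{\star}(4H_{\mathbb{G}|S}+2\sigma)=\pi^{\star}(4H_{\mathbb{G}|S})$, which lands on the same answer for the right reason. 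Granting $K_S$, $q(S)=0$ and $\chi(\sO_S)=46$, the rest of your plan for $Y$ is sound: $\chi(\sO_Y)=\chi(\sO_S)+\chi(L^{-1})=46+126=172$ by Riemann--Roch, $q(Y)=0$ via Serre duality and Kodaira vanishing applied to $K_S+L\equiv 4H_{\mathbb{G}|S}$, hence $p_g(Y)=171$. So the architecture is reasonable, but as written the proof assumes the two hardest items ($K_S$ including its torsion part, and $q(S)=c_2(S)$ data) rather than proving them.
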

\begin{proof} See \cite[Cohomological study pp. 41-45]{W}.
\end{proof}
\subsection{The dual geometry}

We consider the following open set of $C_{\rm{dou}}$:
$$
C_{\rm{dou}}^{0}:=\{p \in C_{\rm{dou}}\mid p\, {\rm{is\, a\, simple\, Gauss\, double\, point}}\}\setminus (C_{\rm{hf}}\cap C_{\rm{dou}})
$$
We set $(C_{\rm{dou}}^{0})^{\vee} :=\phi_{\rm{Gauss}}(C_{\rm{dou}}^{0})$.
\begin{lem} \label{gaussristrettino} The restriction of the Gauss morphism to $C_{\rm{dou}}^{0}$ induces a $2$-degree \'etale covering $\tau\colon C_{\rm{dou}}^{0}\to (C_{\rm{dou}}^{0})^{\vee}$
\end{lem}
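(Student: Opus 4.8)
The plan is to show that the Gauss map restricted to $C_{\rm{dou}}^{0}$ is (a) everywhere two-to-one and (b) unramified, so that it is an \'etale double cover onto its image. The key geometric input is the classification in Proposition \ref{classification of tangentsurves}: a point $p\in C_{\rm{dou}}^{0}$ is a \emph{simple} Gauss double point, meaning $X_p=T_pX\cap X$ has geometric genus $1$ and exactly two nodes, and moreover $p$ has been removed from the flecnodal locus $C_{\rm{hf}}$, so neither node is a hyperflex. I would first record the involution: by the definition of a simple Gauss double point there is a second node $p'\in X_p$, $p'\neq p$, and since $X_p=T_pX\cap X$ is cut out by a single plane $T_pX$, the symmetry of the node-pair shows $T_{p'}X=T_pX$ as well, so that $\phi_{\rm{Gauss}}(p')=\phi_{\rm{Gauss}}(p)$. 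This is exactly the rational involution $j_{\rm{dou}}$ recalled after Proposition \ref{doppioricoprimento}; on the open set $C_{\rm{dou}}^{0}$ it is a genuine morphism $p\mapsto p'$ because both nodes are simple and lie again in the locus of simple Gauss double points.

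Next I would argue that these are the \emph{only} preimages and that $\tau$ has degree exactly $2$. If $q\in C_{\rm{dou}}^{0}$ satisfies $\phi_{\rm{Gauss}}(q)=\phi_{\rm{Gauss}}(p)$, then $T_qX=T_pX$, so $q$ is a point of $X_p=T_pX\cap X$ whose tangent plane equals $T_pX$; such points are precisely the singular points of the plane quartic $X_p$. By the classification $X_p$ has exactly the two nodes $p,p'$ and no further singularities (the swallowtail, parabolic and triple-point cases are excluded on $C_{\rm{dou}}^{0}$), so $\{q\}=\{p,p'\}$ and the fibre has length $2$. Thus $\tau$ is generically finite of degree $2$, and since every fibre over $(C_{\rm{dou}}^{0})^{\vee}$ consists of the two distinct points $p,p'$, it is finite of degree $2$ everywhere on the open set.

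Finally, for \'etaleness I would use the infinitesimal behaviour of the Gauss map recorded before Proposition \ref{tangentcurves}, namely that $d\phi_{{\rm{Gauss}},p}\neq 0$ for every $p\in X$, together with the analysis of asymptotic directions. Ramification of $\phi_{\rm{Gauss}}$ occurs exactly along the parabolic curve $C_{\rm{par}}$ (Proposition \ref{parab}), i.e. where $X_p$ acquires a cusp or a tacnode; but a node of $X_p$ has two distinct asymptotic directions, so the differential $d\phi_{\rm{Gauss}}$ is injective at such a point, and $p\in C_{\rm{dou}}^{0}$ is by construction disjoint from $C_{\rm{par}}$ (the points removed as swallowtails are exactly $C_{\rm{dou}}\cap C_{\rm{par}}$ by Proposition \ref{codadirondine}, and the parabolic Gauss double points are likewise excluded). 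Hence $\phi_{\rm{Gauss}}$ is a local isomorphism at both $p$ and $p'$, and $\tau$ is unramified; being finite of degree $2$ between smooth curves, it is an \'etale double cover. I expect the main obstacle to be the bookkeeping in the second step: one must verify cleanly that no \emph{additional} singular points of $X_p$ appear over the open set $C_{\rm{dou}}^{0}$ and that the node $p'$ genuinely varies in $C_{\rm{dou}}^{0}$ rather than degenerating, which is where the exclusion of $C_{\rm{hf}}\cap C_{\rm{dou}}$ and of the higher-multiplicity loci from Proposition \ref{classification of tangentsurves} does the real work.
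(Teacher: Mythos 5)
Your proposal is correct and follows essentially the same route as the paper: the paper's own proof is a two-sentence version of your first two paragraphs, observing that a simple Gauss double point $p$ comes with a unique companion node $\hat p$ satisfying $T_{\hat p}X=T_pX$ and that no other points of $C_{\rm{dou}}^{0}$ lie over $\phi_{\rm{Gauss}}(p)$. Your additional explicit check of unramifiedness via the disjointness of $C_{\rm{dou}}^{0}$ from the parabolic curve is left implicit in the paper but is consistent with it.
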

\begin{proof} If $p$ is a simple Gauss double point, it comes together with a unique other point ${\hat{p}} \in T_pX$ where $X_p$ has the other node exactly on $\hat p$ and $T_{\hat p}X=T_pX$; that is 
$\phi_{\rm{Gauss}}(p)=\phi_{\rm{Gauss}}({\hat{p}})$. By definition of the double cover curve there are no other points of $C_{\rm{dou}}^{0}$ over $\phi_{\rm{Gauss}}(p)$.
\end{proof}

We define $\Sigma_{\rm{dou}}^{0}\subset\mP^3$ the open surface swept by the lines $l_{p,\hat p}:=\langle p,\hat p \rangle\subset\mP^3$ where $p\in C_{\rm{dou}}^{0}$. We notice that: $1)$ $[l_{p,\hat p}]\in S$ and $2)$ if $p_1\neq p$ then 
$l_{p_1, {\hat{p_1}}}\cap l_{p,\hat p}=\emptyset$. This implies that we can define an injective morphism $ (j_{\rm{dou}}^{0})^{\vee} \colon  (C_{\rm{dou}}^{0})^{\vee}  \hookrightarrow S$ defined by $ (j_{\rm{dou}}^{0})^{\vee}\colon \phi_{\rm{Gauss}}(p)\mapsto [l_{p,\hat p}]$. By the normalisation morphism $\nu_{{\rm{dou}}}^{\vee}\colon {\widetilde{ C^{\vee}_{{\rm{dou}}}} }\to C_{{\rm{dou}}}^{\vee}$ we obtain a global morphism generically of degree 1:
$$j^{\vee}_{{\rm{dou}}}\colon {\widetilde{ C^{\vee}_{{\rm{dou}}}} }\rightarrow S.
$$
We set $$C:=j^{\vee}_{{\rm{dou}}}( {\widetilde{ C^{\vee}_{{\rm{dou}}}} }  ).$$
In Definition \ref{rigariga} we have called $\Sigma_{\rm{dou} }\hookrightarrow\mP^3$ the almost ruled surface of  Gauss double points.
\begin{lem}\label{chiusura} If ${\overline{\Sigma_{\rm{dou}}^{0}}}\subset\mP^3$ is the projective closure of $\Sigma_{\rm{dou}}^{0}$ then it holds that $\Sigma_{\rm{dou}}={\overline{\Sigma_{\rm{dou}}^{0}}}\subset\mP^3$. Moreover $C_{\rm{dou}}\hookrightarrow \Sigma_{\rm{dou}}$.
\end{lem}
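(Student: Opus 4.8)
The plan is to realise both $\Sigma_{\rm{dou}}$ and $\overline{\Sigma_{\rm{dou}}^{0}}$ as the image under $f$ of one and the same irreducible complete surface, after checking that the locus on which the construction via simple Gauss double points is valid is Zariski-dense, and then to conclude by the elementary fact that a morphism from a complete irreducible variety has closed irreducible image and carries dense subsets to dense subsets. Concretely, set $C:=j^{\vee}_{\rm{dou}}(\widetilde{C^{\vee}_{\rm{dou}}})\subset S$ and let $\Sigma:=\pi_S^{-1}(C)\subset\mP(\sQ_S)$ be the ruled surface of the Main Theorem, so that by the definition preceding Definition \ref{rigariga} one has $\Sigma_{\rm{dou}}=f(\Sigma)$.

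First I would record the irreducibility and completeness that make the argument run. The curve $C^{\vee}_{\rm{dou}}=\phi_{\rm{Gauss}}(C_{\rm{dou}})$ is irreducible, being the image of the irreducible curve $C_{\rm{dou}}$ (Proposition \ref{doppioricoprimento}); hence so is its normalisation $\widetilde{C^{\vee}_{\rm{dou}}}$, and therefore so is its image $C$ under $j^{\vee}_{\rm{dou}}$. Since $\widetilde{C^{\vee}_{\rm{dou}}}$ is a complete curve, $C$ is a complete curve in $S$, and $\Sigma=\pi_S^{-1}(C)$ is an irreducible complete surface, being a $\mP^1$-bundle over $C$. Consequently $\Sigma_{\rm{dou}}=f(\Sigma)$ is closed in $\mP^3$ and irreducible.

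Next I would isolate the dense open locus. By Proposition \ref{propietanumeriche} the curve $C_{\rm{dou}}$ is irreducible, and by the classification of Proposition \ref{classification of tangentsurves} the points that are not simple Gauss double points (the Gauss triple points, the parabolic and dual-to-parabolic Gauss double points, and the Gauss swallowtails) are finite in number; removing them together with the finite set $C_{\rm{hf}}\cap C_{\rm{dou}}$ (an intersection of two distinct irreducible curves on $X$) shows that $C_{\rm{dou}}^{0}$ is a nonempty Zariski-open, hence dense, subset of $C_{\rm{dou}}$. Under the \'etale double cover $\tau$ of Lemma \ref{gaussristrettino} and the embedding $(j^{0}_{\rm{dou}})^{\vee}$, this locus corresponds to a dense open subset $C^{0}:=(j^{0}_{\rm{dou}})^{\vee}((C_{\rm{dou}}^{0})^{\vee})$ of $C$, over which the fibre of $\pi_S$ is exactly the bitangent $l_{p,\hat p}$. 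The point to verify here is that for a simple Gauss double point the second node $\hat p$ coincides with $p'=j_{\rm{dou}}(p)$, so that $[l_{p,\hat p}]\in S$ and the line is a genuine bitangent; this is precisely what is assembled in the discussion following Lemma \ref{gaussristrettino}. By construction $f(\pi_S^{-1}(C^{0}))=\bigcup_{p\in C_{\rm{dou}}^{0}}l_{p,\hat p}=\Sigma_{\rm{dou}}^{0}$.

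Finally I would run the closure argument. Since $C^{0}$ is dense in $C$, its preimage $\pi_S^{-1}(C^{0})$ is dense in $\Sigma$; as $f$ is continuous, $\Sigma_{\rm{dou}}^{0}=f(\pi_S^{-1}(C^{0}))$ is dense in $f(\Sigma)=\Sigma_{\rm{dou}}$, and since $\Sigma_{\rm{dou}}$ is closed this gives $\overline{\Sigma_{\rm{dou}}^{0}}=\Sigma_{\rm{dou}}$, the first assertion. For the inclusion $C_{\rm{dou}}\hookrightarrow\Sigma_{\rm{dou}}$, I would observe that every $p\in C_{\rm{dou}}^{0}$ lies on its own line $l_{p,\hat p}\subset\Sigma_{\rm{dou}}^{0}\subset\Sigma_{\rm{dou}}$, whence $C_{\rm{dou}}^{0}\subset\Sigma_{\rm{dou}}$; taking closures and using that $C_{\rm{dou}}^{0}$ is dense in $C_{\rm{dou}}$ while $\Sigma_{\rm{dou}}$ is closed yields $C_{\rm{dou}}=\overline{C_{\rm{dou}}^{0}}\subseteq\Sigma_{\rm{dou}}$. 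The only genuinely substantive step is the density of $C_{\rm{dou}}^{0}$ in $C_{\rm{dou}}$ together with the identification $p'=\hat p$ over it; everything else is the standard topology of images of complete irreducible varieties.
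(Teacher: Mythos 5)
Your proof is correct and follows essentially the same route as the paper's: the paper's two-line argument (irreducibility of $C_{\rm{dou}}$ plus ``the universal property of $\mathbb G$'' for the first claim, and $C_{\rm{dou}}^{0}\hookrightarrow\Sigma_{\rm{dou}}$ plus taking closures for the second) is exactly what you unpack, with the universal-property step made explicit as the completeness and irreducibility of $\pi_S^{-1}(C)$ and the density of $C_{\rm{dou}}^{0}$ in $C_{\rm{dou}}$. Your version is a faithful, more detailed rendering of the paper's proof rather than a different approach.
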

\begin{proof} By Proposition \ref{propietanumeriche} $C_{\rm{dou}}$ is irreducible, then the first claim follows by the universal property of $\mathbb G$. By construction it holds that $C_{\rm{dou}}^{0}\hookrightarrow \Sigma_{\rm{dou}}$ then  $C_{\rm{dou}}\hookrightarrow \Sigma_{\rm{dou}}$ since $C_{\rm{dou}}$ is the closure inside $\mP^3$ of $C_{\rm{dou}}^{0}$.
\end{proof}
\noindent Let $\pi_C\colon \Sigma\to C$ be the ruled surface obtained by the pull-back of $\pi_S\colon\mP(\sQ_S)\to S$ via the natural inclusion $C\hookrightarrow S$.
\begin{lem}\label{sigmairriducibile} The curve $C$ is irreducible and the surface $\Sigma$ is irreducible.
\end{lem}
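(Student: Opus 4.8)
Let me plan a proof that the curve $C$ is irreducible and that the ruled surface $\Sigma$ is irreducible.

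My plan is to deduce the irreducibility of $C$ directly from the irreducibility of $C_{\rm{dou}}^{\vee}$, and then deduce the irreducibility of $\Sigma$ from that of $C$. First I would observe that, by Proposition~\ref{dualedouble} together with Theorem~\ref{dovesta}, the dual curve $C_{\rm{dou}}^{\vee}=\phi_{\rm{Gauss}}(C_{\rm{dou}})$ is the image of the irreducible curve $C_{\rm{dou}}$ under the morphism $\phi_{\rm{Gauss}}$ (Proposition~\ref{propietanumeriche} gives irreducibility of $C_{\rm{dou}}$, and the continuous image of an irreducible space is irreducible). Hence its normalisation $\widetilde{C^{\vee}_{\rm{dou}}}$ is irreducible as well, being the normalisation of an irreducible reduced curve. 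Now $C$ is by definition the image $j^{\vee}_{\rm{dou}}(\widetilde{C^{\vee}_{\rm{dou}}})\subset S$ of the morphism $j^{\vee}_{\rm{dou}}$ constructed just above, and again the image of an irreducible curve under a morphism is irreducible; therefore $C$ is irreducible.

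For $\Sigma$ I would argue as follows. By definition $\pi_C\colon\Sigma\to C$ is the pull-back of the $\mP^1$-bundle $\pi_S\colon\mP(\sQ_S)\to S$ along the inclusion $C\hookrightarrow S$, so $\Sigma=\pi_S^{-1}(C)$ carries the structure of a $\mP^1$-bundle over $C$ via $\pi_C$. A $\mP^1$-bundle over an irreducible base is irreducible: the total space is locally $C\times\mP^1$ in the Zariski topology (or, more robustly, étale-locally), and every fibre $\pi_C^{-1}(c)\cong\mP^1$ is irreducible of the same dimension $1$, so $\Sigma$ cannot split into two closed pieces without one of them dominating $C$ and meeting every fibre in a proper closed subset, which is impossible since each fibre is irreducible. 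Concretely, if $\Sigma=\Sigma_1\cup\Sigma_2$ with both closed of pure dimension $2$, then each $\pi_C(\Sigma_i)$ would be dense in the irreducible curve $C$, and hence $\Sigma_i$ would meet a general fibre $\mP^1$ in a nonempty proper closed subset; since $\mP^1$ is irreducible this forces one $\Sigma_i$ to contain the general fibre and therefore (by taking closures) all of $\Sigma$. Thus $\Sigma$ is irreducible.

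The only genuinely non-formal point is the irreducibility of $C$, which rests on the fact that $C_{\rm{dou}}$ is irreducible; but this is precisely Proposition~\ref{propietanumeriche}, so no new input is needed. The rest is the standard observation that morphic images and projective-bundle total spaces preserve irreducibility. I expect the main (mild) obstacle to be purely bookkeeping: one must make sure that $j^{\vee}_{\rm{dou}}$ is genuinely a morphism defined on the whole of $\widetilde{C^{\vee}_{\rm{dou}}}$ with image $C$ — which is exactly the content of the construction preceding the statement — so that the image is a well-defined irreducible curve rather than merely a constructible set, and that $\Sigma$ is literally the restricted $\mP^1$-bundle rather than some proper subscheme of it.
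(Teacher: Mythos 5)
Your proof is correct and follows essentially the same route as the paper: the irreducibility of $C$ is traced back through the construction to the irreducibility of $C_{\rm{dou}}$ (Proposition~\ref{propietanumeriche}), and the irreducibility of $\Sigma$ follows from $\pi_S\colon\mP(\sQ_S)\to S$ being a $\mP^1$-bundle over the irreducible base $C$. You merely spell out in more detail the two standard facts (images of irreducible varieties under morphisms are irreducible; projective-bundle total spaces over irreducible bases are irreducible) that the paper invokes implicitly.
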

\begin{proof} By Proposition \ref{propietanumeriche} and by its construction the curve $C$ is irreducible. Since $\pi_S\colon\mP(\sQ_S)\to S$ is a fiber bundle then the claim follows.
\end{proof}

\noindent 
Consider again the surface $\Sigma_{\rm{dou} }\hookrightarrow\mP^3$ of  Gauss double points. Note that the pull-back $\Sigma^{0}\to (C_{\rm{dou}}^{0})^{\vee}$ of $\pi_C\colon \Sigma\to C$ via the inclusion $(j_{\rm{dou}}^{0})^{\vee}\colon (C_{\rm{dou}}^{0})^ {\vee}\hookrightarrow S$ is a smooth open ruled surface. Now we obviously have:
$$\Sigma_{\rm{dou}}= f(\Sigma)$$

\begin{cor} There exists a rational map $\mu\colon \Sigma_{\rm{dou} }\dashrightarrow {\widetilde{ C^{\vee}_{{\rm{dou}}}} }$ which induces the natural structure of smooth ruled surface on 
$\Sigma^{0}\to ( C_{\rm{dou}}^{0}) ^{\vee}$
\end{cor}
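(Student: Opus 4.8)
The plan is to build $\mu$ by inverting, as rational maps, the two birational morphisms that already connect $\Sigma_{\rm{dou}}$ to $C$ and $C$ to ${\widetilde{ C^{\vee}_{{\rm{dou}}}} }$, and composing with the ruling $\pi_C$. Concretely I will produce the chain
\[
\Sigma_{\rm{dou}} \stackrel{(f_{|\Sigma})^{-1}}{\dashrightarrow} \Sigma \stackrel{\pi_C}{\longrightarrow} C \stackrel{(j^{\vee}_{{\rm{dou}}})^{-1}}{\dashrightarrow} {\widetilde{ C^{\vee}_{{\rm{dou}}}} }
\]
and set $\mu := (j^{\vee}_{{\rm{dou}}})^{-1}\circ \pi_C \circ (f_{|\Sigma})^{-1}$.

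First I would show that $f_{|\Sigma}\colon \Sigma\to\Sigma_{\rm{dou}}$ is birational. By construction the fiber of $\Sigma^{0}\to (C_{\rm{dou}}^{0})^{\vee}$ over $\phi_{\rm{Gauss}}(p)$ is the line $l_{p,\hat p}$, and $f$ sends $([l_{p,\hat p}],q)$ to $q\in l_{p,\hat p}\subset\mP^3$. Since we have already observed that for $p_1\neq p$ the rulings $l_{p_1,\hat{p_1}}$ and $l_{p,\hat p}$ are disjoint, the morphism $f$ is injective on $\Sigma^{0}$; being a bijective morphism onto $\Sigma_{\rm{dou}}^{0}$ between varieties over $\mC$, $f_{|\Sigma^{0}}$ is birational onto the dense open $\Sigma_{\rm{dou}}^{0}\subset\Sigma_{\rm{dou}}$ (Lemma \ref{chiusura}), hence $f_{|\Sigma}$ is birational and $(f_{|\Sigma})^{-1}$ exists as a rational map restricting to the isomorphism $\Sigma_{\rm{dou}}^{0}\xrightarrow{\sim}\Sigma^{0}$.

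Next I would record that $j^{\vee}_{{\rm{dou}}}\colon {\widetilde{ C^{\vee}_{{\rm{dou}}}} }\to S$ is generically of degree $1$ with image $C$, so that $j^{\vee}_{{\rm{dou}}}\colon {\widetilde{ C^{\vee}_{{\rm{dou}}}} }\to C$ is birational and its rational inverse is defined. Assembling the chain above then gives the rational map $\mu$. For the final assertion I would check compatibility on the open locus: on $\Sigma_{\rm{dou}}^{0}$ the map $(f_{|\Sigma})^{-1}$ is the inverse of the isomorphism of the first step, $\pi_C$ restricts to the ruling $\Sigma^{0}\to (C_{\rm{dou}}^{0})^{\vee}$, and since $(C_{\rm{dou}}^{0})^{\vee}$ consists of images of simple Gauss double points it avoids the nodes and cusps of $C^{\vee}_{\rm{dou}}$ classified in Proposition \ref{dualedouble}; thus $\nu^{\vee}_{{\rm{dou}}}$, and with it $j^{\vee}_{{\rm{dou}}}$, is an isomorphism over it and identifies $(C_{\rm{dou}}^{0})^{\vee}$ with an open subscheme of ${\widetilde{ C^{\vee}_{{\rm{dou}}}} }$. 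Reading $\mu$ through these identifications recovers exactly the structure morphism $\Sigma^{0}\to (C_{\rm{dou}}^{0})^{\vee}$, as claimed.

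The main obstacle is the first step: the degree-one statement for $f_{|\Sigma}$ rests entirely on the disjointness of the rulings $l_{p,\hat p}$, and one must make sure this holds generically, so that the fiber of $f$ over a general point of $\Sigma_{\rm{dou}}^{0}$ is a single reduced point, i.e. no line of the ruling meets $\Sigma_{\rm{dou}}$ in the contact configuration of a different pair $(p_1,\hat{p_1})$. A secondary, more routine point is confirming via the classification (Proposition \ref{propietanumeriche}, Proposition \ref{dualedouble} and Lemma \ref{gaussristrettino}) that $(C_{\rm{dou}}^{0})^{\vee}$ lies in the smooth locus of $C^{\vee}_{\rm{dou}}$, so that the normalization is an isomorphism there.
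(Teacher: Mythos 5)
Your proposal is correct and follows essentially the same route as the paper: both obtain $\mu$ by inverting the birational map from the $\mP^1$-bundle over (the normalization of) $C^{\vee}_{\rm{dou}}$ onto $\Sigma_{\rm{dou}}$ and composing with the ruling, the only cosmetic difference being that the paper pulls back $\pi_S$ directly over ${\widetilde{ C^{\vee}_{{\rm{dou}}}} }$ to get a smooth surface $\widetilde{\Sigma}$, whereas you factor through $\Sigma$ over $C$ and invert $j^{\vee}_{{\rm{dou}}}$ separately. The disjointness of distinct ruling lines that you flag as the main obstacle is exactly the fact the paper records (without further proof) just before Lemma \ref{chiusura}, so your argument rests on the same input as the paper's.
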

\begin{proof} The pull-back $\pi'\colon{\widetilde{\Sigma}} \to {\widetilde{ C^{\vee}_{{\rm{dou}}}} }$ of $\pi_S\colon\mP(\sQ_S)\to S$
via the morphism $j^{\vee}_{{\rm{dou}}}\colon {\widetilde{ C^{\vee}_{{\rm{dou}}}} }\rightarrow S$ is a smooth surface which is mapped birationally onto $\Sigma_{\rm{dou}}$ in a way compatible with the morphism $\phi_{\rm{Gauss}|C_{\rm{dou}}}\colon C_{\rm{dou}}\to C^{\vee}_{\rm{dou}}$.
\end{proof}

\begin{lem}\label{osculanza} The almost ruled surface of  Gauss double points is irreducible and it osculates $X$ along $C_{\rm{dou}}$; that is the restriction $\Sigma_{{\rm{dou}}|X}$ is a subscheme of $X$ which contains $2C_{\rm{dou}}$.
\end{lem}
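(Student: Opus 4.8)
The plan is to prove the two assertions in turn, the irreducibility being formal and the osculation being the substantial point. For irreducibility I would simply invoke $\Sigma_{\rm{dou}}=f(\Sigma)$ together with Lemma~\ref{sigmairriducibile}: the image of the irreducible surface $\Sigma$ under the morphism $f$ is irreducible, and it is genuinely two-dimensional because the smooth open ruled surface $\Sigma^{0}\to (C_{\rm{dou}}^{0})^{\vee}$ maps into it non-degenerately. I would also record once and for all that $\Sigma_{\rm{dou}}\neq X$: being ruled, $\Sigma_{\rm{dou}}$ contains lines, whereas the very general quartic $X$ contains none; hence $X$ is not a component of $\Sigma_{\rm{dou}}$, so $\Sigma_{\rm{dou}}\cap X$ is a proper intersection and the scheme-theoretic restriction $\Sigma_{{\rm{dou}}|X}$ is a genuine effective divisor on $X$.

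For the osculation, note first that $C_{\rm{dou}}\hookrightarrow\Sigma_{\rm{dou}}$ by Lemma~\ref{chiusura} and $C_{\rm{dou}}\subset X$ by definition, so $C_{\rm{dou}}$ is contained in $\Sigma_{{\rm{dou}}|X}$; what must be shown is that it occurs with multiplicity at least two. Since $C_{\rm{dou}}$ is irreducible (Proposition~\ref{propietanumeriche}) this multiplicity may be read at the generic point, i.e. at a general simple Gauss double point $p\in C_{\rm{dou}}^{0}$, and the statement to establish is the tangency $T_{p}\Sigma_{\rm{dou}}=T_{p}X$. Indeed, two surfaces sharing their tangent plane along a curve meet with multiplicity at least two there; concretely, writing $g$ for a local equation of $\Sigma_{\rm{dou}}$, tangency forces the differential of $g_{|X}$ to vanish identically along $C_{\rm{dou}}$, whence $g_{|X}$ vanishes to order at least two along $C_{\rm{dou}}$, which is precisely $\Sigma_{{\rm{dou}}|X}\supseteq 2C_{\rm{dou}}$.

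To produce this tangency I would parameterise $\Sigma_{\rm{dou}}$ locally near $p$ by $\Phi(t,s)=p(t)+s\,v(t)$, where $t\mapsto p(t)$ is the local branch of $C_{\rm{dou}}$ through $p=p(0)$, used as a section of the ruling via $C_{\rm{dou}}\hookrightarrow\Sigma_{\rm{dou}}$, and $v(t)$ is the direction of the bitangent line $l_{p(t),\hat p(t)}$. Then $\partial_{s}\Phi|_{0}=v(0)$ is the direction of $l_{p,\hat p}$, which lies in $T_{p}X$ because, by Definition~\ref{definizionedibitangente}, the bitangent is tangent to $X$ at $p$; and $\partial_{t}\Phi|_{0}=p'(0)$ is tangent to $C_{\rm{dou}}\subset X$, hence also lies in $T_{p}X$. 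For general $p$ these two vectors are independent, so $\Phi$ is an immersion at the origin, $p$ is a smooth point of $\Sigma_{\rm{dou}}$, and $T_{p}\Sigma_{\rm{dou}}$ is the plane they span; being contained in the two-plane $T_{p}X$, it coincides with it.

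The hard part will be the genericity bookkeeping: verifying that at a general point of $C_{\rm{dou}}$ the ruling direction $v(0)$ and the tangent $p'(0)$ to $C_{\rm{dou}}$ are indeed linearly independent --- equivalently that the bitangent line is not tangent to $C_{\rm{dou}}$, which should hold off a finite set since $C_{\rm{dou}}$ meets the generic ruling transversally as a bisection --- and then propagating the pointwise tangency to the scheme-theoretic inequality along all of $C_{\rm{dou}}$ by irreducibility of $C_{\rm{dou}}$ and passage to the closure. Everything else is the local differential-geometric computation of the two tangent directions, which is routine once the local section $p(t)$ and the ruling direction $v(t)$ are in place.
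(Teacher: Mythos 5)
Your proposal is correct, but it proves the osculation by a genuinely different mechanism than the paper. The paper's argument is a one-line fiberwise computation on the ruled surface: for every ruling line $l=l_{p,\hat p}$ of $\Sigma^{0}\to (C_{\rm{dou}}^{0})^{\vee}$ one has $X_{|l}=2p+2\hat p$ by the very definition of a bitangent at two simple Gauss double points, so the divisor $f^{\star}X$ on $\Sigma^{0}$ restricts to $2p+2\hat p$ on each fiber and hence contains $2\widetilde{C_{\rm{dou}}}$ (no fiber can be a component since $X$ contains no lines); pushing forward gives $\Sigma_{{\rm{dou}}|X}\supseteq 2C_{\rm{dou}}$. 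In other words, the paper measures the order of contact \emph{along the ruling direction}, where it is $2$ by definition and no transversality hypothesis is needed. You instead measure it \emph{transversally to $C_{\rm{dou}}$ inside $X$}, via the identity $T_{p}\Sigma_{\rm{dou}}=T_{p}X$ at a general $p\in C_{\rm{dou}}^{0}$ and the classical tangency-implies-multiplicity-two criterion; this is sound, but it obliges you to verify that the ruling direction $v(0)$ and the tangent $p'(0)$ to $C_{\rm{dou}}$ are independent and that $\Sigma_{\rm{dou}}$ has a smooth branch at $p$ spanned by them. The genericity check you flag as "the hard part" can be closed cleanly using the paper's own structure: independence at the general point is exactly the statement that the degree-$2$ covering $\pi_{\rm{dou}}\colon \widetilde{C_{\rm{dou}}}\to \widetilde{C^{\vee}_{\rm{dou}}}$ of Proposition \ref{normalizzodouble} is not everywhere ramified, which is automatic in characteristic zero (and is quantified by the $320$ swallowtail branch points of Corollary \ref{sontrecentoventi}). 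Your observation that $\Sigma_{\rm{dou}}\neq X$ because $\Sigma_{\rm{dou}}$ is ruled while the very general quartic contains no lines is a worthwhile addition that the paper leaves implicit. On balance the paper's route is shorter and more robust, while yours yields the slightly finer geometric statement that $\Sigma_{\rm{dou}}$ and $X$ share tangent planes along $C_{\rm{dou}}$; the irreducibility argument is identical in both.
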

\begin{proof} By construction we have seen that $\Sigma_{\rm{dou}}= f(\Sigma)$. Then the first claim follows by Lemma \ref{sigmairriducibile}.
We show that the open ruled surface $\Sigma^{0}$ osculates $X$ along $C_{\rm{dou}}^{0}$; but this follows by definition since any fiber $l$ of $\Sigma^{0}\to (C_{\rm{dou}}^{0} )^{\vee}$ osculates $X$ along the corresponding two simple Gauss points $p,\hat p$ such that $X_{|l}=2p+2{ \hat p}\in{\rm{Div}} (l)$.
\end{proof}

\subsection{The geometry of the morphism of bitangents}
Now we start the study of the morphism $f\colon\mP(\sQ_S)\to\mP^3$.

\begin{lem}\label{finitezza} The morphism $f\colon\mP(\sQ_S)\to\mP^3$ is finite of degree $12$
\end{lem}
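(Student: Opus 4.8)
The plan is to exploit that $f\colon\mP(\sQ_S)\to\mP^3$ is a proper morphism between irreducible projective $3$-folds, $\mP(\sQ_S)$ being a $\mP^1$-bundle over the smooth surface $S$ (Proposition \ref{lisciezza}). Hence $f$ is finite as soon as it is quasi-finite, and in that case its degree equals the length of a general fibre. Geometrically the fibre over $p\in\mP^3$ is
\[
f^{-1}(p)=\{([l],p)\mid [l]\in S,\ p\in l\}\cong S\cap\sigma_p,
\]
the scheme of bitangent lines of $X$ passing through $p$. Thus $\deg f$ is the number of bitangents to $X$ through a general point of $\mP^3$, and I must also verify that no fibre is positive-dimensional.

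For finiteness I would argue fibrewise. If $p\notin X$, a positive-dimensional fibre would produce a one-parameter family of lines through $p$ bitangent to $X$, forcing the branch curve of the projection $\pi_p\colon X\to\mP^2$ to be non-reduced along a curve; this is excluded for $X$ general. If $p\in X$, then $p$ is a support point of $X_{|l}$ for any bitangent $l$ through it, so by Definition \ref{definizionedibitangente} the scheme $X_{|l}$ is non-reduced at $p$, i.e. $l\subset T_pX$; such an $l$ is then a tangent line drawn from the double point $p$ of the plane quartic $X_p=T_pX\cap X$ (Proposition \ref{classification of tangentsurves}), and these are finite in number because $X$ contains no lines. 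Hence every fibre is finite and $f$ is finite.

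To compute the degree I would count the bitangents through a general $p\in\mP^3$ by projecting from $p$. The map $\pi_p\colon X\to\mP^2$ is finite of degree $4$, and its ramification curve is $R=X\cap\mathrm{Pol}_p(X)\in|3h|$, which for general $p$ is smooth of degree $12$; since $X$ is a K3 surface ($K_X=0$), adjunction gives $2g(R)-2=R^2=9h^2=36$, so $g(R)=19$. The branch curve $B=\pi_p(R)\subset\mP^2$ has degree $12$ and, being the birational image of $R$, geometric genus $19$. For general $p$ its only singularities are nodes and cusps: the nodes are exactly the bitangents through $p$ (two distinct simple contact points with a common image), while the cusps are the inflectional tangents, i.e. the lines from $p$ meeting $X$ with contact of order $\geq 3$. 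The latter form the scheme $R\cap\bigl(X\cap\mathrm{Pol}^2_p(X)\bigr)$, cut on $X$ by the second polar in $|2h|$, of length $[3h]\cdot[2h]=6h^2=24$; hence $B$ has $24$ cusps.

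Finally, the genus formula for a plane curve of degree $12$ with $\delta$ nodes and $\kappa=24$ cusps reads $19=\binom{11}{2}-\delta-\kappa=55-\delta-24$, whence $\delta=12$. Therefore there are precisely $12$ bitangents through a general point, so $\deg f=12$. As a cross-check, writing the class of $S$ in the Schubert basis $\{\sigma_p,\sigma_h\}$ of $\mathrm{CH}^2(\mathbb{G})$ from Lemma \ref{chowclass}, one has $\deg f=[S]\cdot\sigma_p$, and using $\sigma_p^2=1$, $\sigma_h\cdot\sigma_p=0$ this intersection number is again $12$. The delicate part of the argument is the finiteness step, together with the verification that for general $p$ the branch curve $B$ acquires no singularities worse than ordinary nodes and cusps; this is exactly where the generality of $X$ (and of $p$) enters.
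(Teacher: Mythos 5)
Your degree count is correct and is genuinely different from the paper's treatment: the paper never computes the number $12$ inside its proof (it defers to \cite{T}, \cite{W} and, implicitly, to the class $[S]\cdot\sigma_p=12$ of Lemma \ref{chowclass}), whereas your projection-from-$p$ argument --- $R=X\cap\mathrm{Pol}_p(X)\in|3h|$ of genus $19$, branch sextic replaced by a degree-$12$ discriminant with $24$ cusps counted by the second polar, and $\delta=55-24-19=12$ nodes --- is a self-contained classical derivation whose numerics all check out. Your handling of the fibres over $p\in X$ (every bitangent through $p\in X$ has $p$ as a contact point, hence lies in $T_pX$, and these are finitely many since $X_p$ is an irreducible quartic containing no line) is also sound and is not spelled out in the paper.

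The genuine gap is in the finiteness step for $p\notin X$, which is the entire content of the paper's own proof. You reduce a positive-dimensional fibre to the statement that the discriminant of $\pi_p\colon X\to\mP^2$ is non-reduced along a curve, and then dismiss this as ``excluded for $X$ general.'' That dismissal does not work as stated: the lemma asserts finiteness of \emph{every} fibre of $f$ for a \emph{fixed} very general $X$, so you cannot invoke generality of $p$, and generality of $X$ cannot be applied pointwise in $p$ without an incidence-variety dimension count that you do not supply. The paper closes exactly this hole with two specific inputs. First, a non-reduced restriction $\mathrm{Pol}_p(X)|_X=2D+A$ of total degree $12$ forces, via $\mathrm{Pic}(X)=\mathbb{Z}h$ with $h^2=4$, that $D$ is a hyperplane section; this would make $S$ contain the rational curve parameterising a pencil of lines in a plane through $p$. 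Second, $S$ contains no rational curves at all, because the \'etale cover $S_X\to S$ of \cite{T} has Albanese morphism a closed immersion (\cite[Corollary A.3]{W}), and a rational curve on $S$ would lift to a rational curve in an abelian variety. Without some substitute for these two facts your finiteness claim is an assertion, not a proof; everything downstream (properness $+$ quasi-finiteness $\Rightarrow$ finite, and the identification of $\deg f$ with the general fibre count) is fine once finiteness is secured.
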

\begin{proof} Let $p\in\mP^3$ such that the $f$-fiber is of positive dimension. This means that there are infinite bitangent lines through $p$. Then the polar cubic ${\rm{Pol}}_{p}(X)$ has at least a component swept by lines through $p$. The restriction of ${\rm{Pol}}_{p}(X)$ to $X$ is non reduced. This implies that this restriction is a divisor of type $2D+A$. Since we are assuming that ${\rm{Pic}}(X)=[H_{ \mP^{3}_{\mid X} }] \mathbb Z$, $X$ does not contain curves of degree $\leq 3$. Then the only possibility is that $D$ is an hyperplane section. This  implies that $S$ contains the rational curve which parameterises the pencil of lines contained in a plane and passing through $p$. This is a contradiction. Indeed there exists a unamified covering $S_X\to S$ where $S_X$ is an irregular surface; the details of the proof are in \cite[Proposition 2.4]{T}and in \cite[Proposition 3.1]{T}. 
Another self-contained proof is in \cite[Lemma 1.1]{W}. See also \S 3 of \cite{CZ}. Finally by \cite[Corollary A. 3, p. 53]{W} it is known that the Albanese morphism  ${\rm{alb}}\colon S_{X}\to {\rm{Alb}}(S_X)$ is a closed immersion. In particular there are no rational curves on $S$.
\end{proof}
\subsubsection{The ramification divisor}
\begin{lem}\label{contattoX} The surface of contact points is a subdivisor of $R(f)$of multiplicity $1$. Moreover the induced morphism $\rho\colon Y\to X$ is of degree $6$ and $f^{\star}\sO_{\mP^3}(X)=\sO_{\mP(\sQ_S)}(2Y)$
\end{lem}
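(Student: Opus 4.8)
The plan is to derive all three assertions from the pullback identity $f^{\star}\sO_{\mP^3}(X)=\sO_{\mP(\sQ_S)}(2Y)$, so I establish that first, and in fact in the sharper scheme-theoretic form $f^{-1}(X)=2Y$. Since $f$ maps each fibre $l\cong\mP^1$ of $\pi_S$ isomorphically onto the corresponding line $l\subset\mP^3$, the tautological divisor satisfies $f^{\star}\sO_{\mP^3}(1)=\sO(R)$ (consistent with Lemma \ref{relativeonX}(1)), whence $f^{\star}\sO_{\mP^3}(X)=\sO(4R)$. By Proposition \ref{ilrivestimento doppio ramificato} we have $Y\in|2R+\pi_S^{\star}\sigma|$ with $\sigma$ a $2$-torsion class, so $2Y\in|4R+2\pi_S^{\star}\sigma|=|4R|$ and therefore $f^{\star}\sO_{\mP^3}(X)=\sO(2Y)$ in $\Pic(\mP(\sQ_S))$. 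To upgrade this to $f^{-1}(X)=2Y$ I would compare the two divisors fibrewise: for every bitangent $l$ the scheme $X_{|l}$ is, by Definition \ref{definizionedibitangente}, everywhere non-reduced, hence of the form $2p+2p'$, so $f^{-1}(X)\cap l=2(Y\cap l)$. Thus $f^{-1}(X)-2Y$ is linearly trivial and restricts to $0$ on every fibre, so it equals $\pi_S^{\star}D_0$ for some divisor $D_0$ on $S$; as neither $f^{-1}(X)$ nor $2Y$ contains a fibre of $\pi_S$ (no line $l$ lies on $X$), this pullback has no components, forcing $D_0=0$ and $f^{-1}(X)=2Y$.

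Next I compute $\deg\rho$ for $\rho=f|_Y\colon Y\to X$. The quickest route is via the class $Y\in|6T+8R|$ of Proposition \ref{sopraX}: the degree of $\rho$ equals the intersection of $Y$ with a general fibre of $\rho_X\colon\mP(\Omega^1_X(1))\to X$, which by Lemma \ref{projcotangent} is the pencil $\mP(T_pX)$ of tangent lines at $p$. On this $\mP^1$ the class $T$ restricts to $\sO(1)$ while $R=\rho_X^{\star}h$ restricts trivially, so $Y\cdot(\text{fibre})=(6T+8R)\cdot(\text{fibre})=6$ and $\deg\rho=6$. This matches the geometric count: for general $p\in X$ the section $X_p=T_pX\cap X$ is, by Proposition \ref{classification of tangentsurves}(1), an irreducible plane quartic of geometric genus $2$ with a single node at $p$, and the bitangents of $X$ with contact at $p$ are exactly the lines of $T_pX$ through that node tangent to $X_p$ elsewhere, i.e.\ the ramification points of the degree-$2$ projection $\widetilde{X_p}\to\mP^1$ from the node; Riemann--Hurwitz on the genus-$2$ curve $\widetilde{X_p}$ then yields $6$.

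Finally I treat the ramification statement. That $Y\subseteq R(f)$ follows from a rank count for $df$ along $Y$: writing $T\mP(\sQ_S)=TY\oplus(\text{vertical})$ at a point $([l],p)\in Y$, the horizontal part of $df$ equals $d\rho$ and lands in $T_pX$ because $f(Y)\subseteq X$, while the vertical part has image $T_pl\subseteq T_pX$ because $l$ is tangent to $X$ at $p$; hence $\mathrm{im}\,df\subseteq T_pX$ is $2$-dimensional and $f$ ramifies along $Y$. For the multiplicity, the scheme-theoretic identity $f^{-1}(X)=2Y$ shows that the smooth reduced branch hypersurface $X$ pulls back with coefficient $2$ along the irreducible divisor $Y$ dominating it; in characteristic $0$ this means the generic ramification index of $f$ along $Y$ equals $2$, so by the standard formula $R(f)=\sum_i(e_i-1)R_i$ the divisor $Y$ occurs in $R(f)$ with multiplicity $e_Y-1=1$. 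I expect the only genuinely delicate point to be the upgrade from the Picard-group equality to the scheme-theoretic equality $f^{-1}(X)=2Y$ (the fibrewise non-reducedness together with the absence of vertical components); once this is secured, both the multiplicity-$1$ assertion and its consistency with the degree-$12$ finiteness of Lemma \ref{finitezza} --- namely $\deg f=2\deg\rho=12$ --- follow immediately.
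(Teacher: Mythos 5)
Your argument is correct and proves all three assertions, but it runs in the opposite logical direction from the paper's proof and is in one respect more complete. The paper first establishes $\deg\rho=6$ geometrically --- exactly your second count: the bitangents with contact at a general $p\in X$ are the six ramification points of the degree-$2$ projection $\widetilde{X_p}\to\mP^1$ from the node of the genus-$2$ section $X_p$ --- and only then extracts the coefficient $a$ in $f^{\star}[X]=a[Y]$ from the projection formula $12[X]=f_{\star}f^{\star}[X]=a f_{\star}[Y]=6a[X]$; so there the pullback identity depends on Lemma \ref{finitezza} and on the degree of $\rho$. You instead obtain $f^{\star}\sO_{\mP^3}(X)=\sO_{\mP(\sQ_S)}(2Y)$ directly from $R=f^{\star}H_{\mP^3}$ together with $Y\in|2R+\pi_S^{\star}\sigma|$ and the $2$-torsion of $\sigma$ (Proposition \ref{ilrivestimento doppio ramificato}), and you upgrade it to the scheme-theoretic equality $f^{\star}X=2Y$ via the fibrewise non-reducedness of $X_{|l}$ and the absence of vertical components. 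That upgrade is genuinely extra information, and it is what allows you to prove the multiplicity-one claim honestly as $e_Y-1=1$; the paper's own proof never really addresses that part of the statement (it is only recovered implicitly from Lemmas \ref{ramif} and \ref{ramif2}). Your intersection-theoretic recomputation $\deg\rho=Y\cdot\rho_X^{-1}(p)=(6T+8R)\cdot\rho_X^{-1}(p)=6$ via Proposition \ref{sopraX} and Lemma \ref{relativeonX} is an independent cross-check, legitimate because $\rho$, being a restriction of the finite morphism $f$, is finite, so $Y$ contains no fibre of $\rho_X$. Two minor points to tidy: in the fibrewise step $X_{|l}$ can also equal $4p$ on a hyperflex line, which is still $2(Y\cap l)$ because $\pi\colon Y\to S$ is branched over $B_{\rm{hf}}$, so nothing breaks; and the splitting $T\mP(\sQ_S)=TY\oplus(\mathrm{vertical})$ in your rank count is only valid at points where $Y$ is transverse to the fibre of $\pi_S$, which holds generically on $Y$ and suffices for the divisorial conclusion.
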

\begin{proof} By definition $f^{-1}(X)= Y$. Since any bitangent line through a point $p\in X$  is contained in $T_pX$, any bitangent line through $p$ gives a ramification point for the morphism ${\widetilde{X_p}}\rightarrow \mP^1$ given by the pencil of lines inside $T_pX$ with focal point $p$. This and Lemma \ref{finitezza} imply that the induced morphism $\rho\colon Y\to X$ is finite of degree $6$. Since $f^{-1}(X)= Y$ then Chow groups projection formula implies that $12[X] =f_{\star}f^{\star}[X]=f_{\star}a[Y]=af_{\star}[Y]=6a[X]$. Then $a=2$.
\end{proof}

\begin{lem}\label{sigma} The surface $\Sigma$ belongs to $R(f)$.
\end{lem}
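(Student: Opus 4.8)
The plan is to reduce the claim to a pointwise rank computation for the differential $df$, carried out in exactly the local coordinates used in the proof of Lemma~\ref{lisciezza}. I fix a general point $[l]\in C$, so that $l=l_{p,\hat p}$ joins two simple Gauss double points $p,\hat p$ with $T_pX=T_{\hat p}X$; since $p\neq\hat p$ I may normalise $l=(x_2=x_3=0)$ with contact points $P=(1:0:0:0)$ and $P_\lambda=(1:\lambda:0:0)$, $\lambda\neq 0$, and write $F=x_1^2(x_1-\lambda x_0)^2+x_2G+x_3H$. Parametrising the lines near $[l]$ by a tangent vector $u=(u_0,u_1,u_2,u_3)\in T_{[l]}S$ and the chosen point on the line by the affine coordinate $t=x_1/x_0$, the morphism $f$ reads, in the chart $x_0=1$,
$$f(u,t)=(t,\;u_0+tu_1,\;u_2+tu_3).$$
Differentiating at $q=([l],t_0)$ (that is $u=0$), the $\partial_t$ direction supplies the free first coordinate, so $df_q$ is surjective if and only if the map $\Phi_{t_0}\colon T_{[l]}S\to\mC^2$, $u\mapsto(u_0+t_0u_1,\,u_2+t_0u_3)$, is surjective. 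As $\dim T_{[l]}S=2$ this is a $2\times 2$ rank question and, $f$ being finite by Lemma~\ref{finitezza}, $q\in R(f)$ precisely when $\det\Phi_{t_0}=0$.

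The key step is to exploit the defining condition of $C$. Writing $g_0:=g(1:0)$, $h_0:=h(1:0)$, $g_\lambda:=g(1:\lambda)$, $h_\lambda:=h(1:\lambda)$, the tangent planes at the two contact points computed from $F$ are $\{x_2g_0+x_3h_0=0\}$ and $\{x_2g_\lambda+x_3h_\lambda=0\}$; hence $T_pX=T_{\hat p}X$ is equivalent to the proportionality $(g_0,h_0)\parallel(g_\lambda,h_\lambda)$. Substituting this into the two linear equations that cut out $T_{[l]}S$ in Lemma~\ref{lisciezza} collapses them to the symmetric pair
$$u_0g_0+u_2h_0=0,\qquad u_1g_0+u_3h_0=0.$$
Consequently, for every $u\in T_{[l]}S$ and every $t_0$,
$$g_0(u_0+t_0u_1)+h_0(u_2+t_0u_3)=(u_0g_0+u_2h_0)+t_0(u_1g_0+u_3h_0)=0,$$
so the image of $\Phi_{t_0}$ always lies in the fixed line $\{g_0a+h_0b=0\}\subset\mC^2$. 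Since $X$ is smooth, $(g_0,h_0)\neq(0,0)$, whence $\Phi_{t_0}$ has rank $\leq 1$ and $df_q$ fails to be surjective for every point of $l$.

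This shows that the \emph{entire} ruling $\pi_{S}^{-1}([l])$ is contained in $R(f)$ for every bitangent associated to a simple Gauss double point, that is, for $[l]$ ranging over a dense open subset of $C$. Because $R(f)$ is closed and $\Sigma=\pi_{S}^{-1}(C)$ is irreducible by Lemma~\ref{sigmairriducibile}, this forces $\Sigma\subseteq R(f)$.

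The point requiring care — and the reason the computation yields the two-dimensional $\Sigma$ rather than a curve — is the uniformity in $t_0$: the relation above holds for \emph{all} $t_0$, so the whole line ramifies, in contrast with a generic bitangent $[l]\notin C$, where the same $\Phi_{t_0}$ drops rank only at the two contact values $t_0\in\{0,\lambda\}$ (which is exactly the $Y$-part of $R(f)$). The main obstacle is thus the correct translation of $T_pX=T_{\hat p}X$ into the proportionality $(g_0,h_0)\parallel(g_\lambda,h_\lambda)$, together with the genericity needed to ensure $\lambda\neq 0$ and $(g_0,h_0)\neq(0,0)$.
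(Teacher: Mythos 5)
Your proof is correct and follows essentially the same route as the paper: the local description of $T_{[l]}S$ from Proposition \ref{lisciezza}, the identification of $T_PX=T_{P_\lambda}X$ with the vanishing of $\det\bigl(\begin{smallmatrix} g_0 & h_0\\ g_\lambda & h_\lambda\end{smallmatrix}\bigr)$, and the observation that the resulting rank drop of $df$ holds at every point $t_0$ of the ruling, so that closedness of $R(f)$ and irreducibility of $\Sigma$ finish the argument. Your device of exhibiting the fixed linear form $g_0a+h_0b$ annihilating the image of $\Phi_{t_0}$ neatly sidesteps the case analysis on the vanishing of $g_0,h_0,g_\lambda,h_\lambda$ that the paper carries out on the explicit $3\times 3$ Jacobian, but the substance of the computation is the same.
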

\begin{proof} This follows easily by a local count. We use notation of Proposition \ref{lisciezza}. We consider  a line $l\subset \mP^3$ which is bitangent to $X$. W.l.o.g. we can assume that $l:=(x_2=x_3=0)$ and that the two points where $l$ is tangent to $X$ are $P=(1:0:0:0)$, $P_{\lambda}(1:\lambda:0:0)$. For a while assume $\lambda\neq 0$. The general point $p\in l$ is then given by a parameter $t\in\mathbb C$ and $p=(1:t:0:0)$. We easily can write the tangent space to $\mP(\sQ_S)$ at the point $([l],p)$. Indeed let $w_1=\frac{x_1}{x_0}-t$, $w_2=\frac{x_2}{x_0}$, $w_3=\frac{x_3}{x_0}$ such that $(w_1,w_2,w_3)$ is a system of local coordinates around the point $p\in\mP^3$ and let $(u_0,u_1,u_2,u_3)$ be a local system of coordinates at the point $[l]\in \mathbb G$, that is a line $r$ near to $l$ is parameterised by
\[
\begin{cases}
u_0x_0+u_1w_1+u_1t=w_2\\
u_2+u_3w_1+u_3t=w_3
\end{cases}
\]
Then (locally)  inside $\mP^3\times \mathbb G$, where we take coordinates $(w_1,w_2,w_3,u_0,u_1,u_2,u_3)$ the tangent space $T_{([l],p)}\mP(\sQ_S)$ is (locally) given by 

\[
\begin{cases}
u_0+u_1t-w_2=0\\
u_2+u_3t-w_3=0\\
u_0g(1:0)+u_2h(1:0)=0 \\
(u_0+\lambda u_1)g(1:\lambda)+(u_2+\lambda u_3)h(1:\lambda)=0
\end{cases}
\]
since the proof of Proposition \ref{lisciezza}. The morphism $f\colon\mP(\sQ_S)\to\mP^3$ is given by the restriction to $\mP(\sQ_S)$ of the natural projection $\rho\colon\mP(\sQ)=\mP(\Omega^1_{\mP^3})\to\mP^3$, which is, locally, $(w_1,w_2,w_3,u_0,u_1,u_2,u_3)\mapsto  (w_1,w_2,w_3)$. Now suppose for a while we are in the general case where $h(1:0)=H(1:0:0:0)\neq 0$ and  $h(1:\lambda)=H(1:\lambda:0:0)\neq 0$.
We take $(w_1, u_0,u_1)$ as local coordinates for the threefold $\mP(\sQ_S)$ around the point $([l],p)$ . The matrix of the differential $d_{([l],p)}f$ is then given by

\[ \begin{pmatrix} 1 & 0& 0 \\
0&1& t\\
0& -(\lambda-t)\frac{g(1:0)}{\lambda h(1:0)}-\frac{tg(1:\lambda)}{\lambda h(1:\lambda)}&-\frac{tg(1:\lambda)}{h(1:\lambda)}
\end{pmatrix} \]
Hence its determinant is zero iff
$
0=t(t-\lambda)\cdot {\rm{det}} \bigl( \begin{smallmatrix} g(1:0) & h(1:0) \\
g(1:\lambda)& h(1:\lambda)
\end{smallmatrix}
\bigr)$; we immediately see that $(0=t(t-\lambda))$ is the local equation of $Y\subset \mP(\sQ_S)$. The condition $0={\rm{det}} \bigl( \begin{smallmatrix} g(1:0) & h(1:0) \\
g(1:\lambda)& h(1:\lambda)
\end{smallmatrix}
\bigr)$ is independent of $t$. This last condition means that when it is satisfied this occurs for all the points which belong to the bitangent line $l$. Finally it is a trivial verification on the equation $$
F(x_0:x_1:x_2:x_3)=x_{1}^2(x_1-\lambda x_0)^2 +x_2G(x_0:x_1:x_2:x_3)+x_3H(x_0:x_1:x_2:x_3)
$$
of $X$ to see that generically the condition $0={\rm{det}} \bigl( \begin{smallmatrix} g(1:0) & h(1:0) \\
g(1:\lambda)& h(1:\lambda)
\end{smallmatrix}
\bigr)$ occurs iff $T_PX=T_{P_{\lambda}}X$ and $X_P=T_PX\cap X$ is a quartic with two nodes respectively on $P$ and on $P_{\lambda}$. Analogue computations hold if we are in more special cases where $h(1:\lambda)=0$ and $h(1:0)\neq 0$ or $h(1:\lambda)\neq 0$ and $h(1:0)= 0$ or $g(1:\lambda)=0$ and $g(1:0)\neq 0$ or $g(1:\lambda)\neq 0$ and $g(1:0)= 0$ or where $\lambda=0$. All these verifications are analogue to the one shown above. The condition above means that the open surface $\Sigma_0:=\pi_{S}^{-1}( (j_{\rm{dou}}^{0})^{\vee}  ( (C_{\rm{dou}}^{0})^{\vee} ))$ is inside $R(f)$. This implies the claim.
\end{proof}

\begin{lem}\label{ramif} The divisor $R(f)$ is linearly equivalent to $Y +\pi_{S}^{\star} (4H_{\mathbb G|S})$.
\end{lem}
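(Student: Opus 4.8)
The plan is to read $R(f)$ off the ramification (Hurwitz) formula and then repackage the answer in terms of the class of $Y$. Since $f\colon\mP(\sQ_S)\to\mP^3$ is finite by Lemma \ref{finitezza} and both $\mP(\sQ_S)$ and $\mP^3$ are smooth threefolds, the ramification formula for a finite morphism of smooth varieties gives $K_{\mP(\sQ_S)}\sim f^{\star}K_{\mP^3}+R(f)$, hence $R(f)\sim K_{\mP(\sQ_S)}-f^{\star}K_{\mP^3}$. Thus everything reduces to computing the two canonical classes and the pullback under $f$ of the hyperplane class $H$ of $\mP^3$.

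First I would compute $K_{\mP(\sQ_S)}$. As $\pi_S\colon\mP(\sQ_S)\to S$ is a $\mP^1$-bundle with relative hyperplane class $R$ (so that $\pi_{S\star}\sO_{\mP(\sQ_S)}(R)=\sQ_S$), the relative canonical bundle of a rank-$2$ projective bundle is $\omega_{\mP(\sQ_S)/S}\sim -2R+\pi_S^{\star}\det\sQ_S$. Since $\det\sQ_S=\sO_S(H_{\mathbb{G}|S})$ and, by Theorem \ref{formulae}, $K_S=3H_{\mathbb{G}|S}+\sigma$, adding $\pi_S^{\star}K_S$ yields $K_{\mP(\sQ_S)}\sim -2R+\pi_S^{\star}(4H_{\mathbb{G}|S}+\sigma)$.

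Next I would identify $f^{\star}H$. Because $f$ is the restriction to $\mP(\sQ_S)$ of $\rho_{\mP^3}\colon\mP(\sQ)\to\mP^3$, it suffices to show $\rho_{\mP^3}^{\star}H\sim R$ on the incidence variety $\mP(\sQ)=\{([l],p)\mid p\in l\}$. This I would verify on fibers: restricted to a fiber $l\cong\mP^1$ of $\pi_{\mathbb{G}}$ both $\rho_{\mP^3}^{\star}H$ and $R$ have relative degree $1$, so $\rho_{\mP^3}^{\star}H-R=\pi_{\mathbb{G}}^{\star}D$ for some $D\in\Pic(\mathbb{G})=\mathbb{Z}H_{\mathbb{G}}$; restricting instead to the Schubert surface $\sigma_p=\rho_{\mP^3}^{-1}(p)\cong\mP^2$, on which $\rho_{\mP^3}^{\star}H$ vanishes and $R$ is trivial (the tautological quotient is constant along $\sigma_p$), forces $D=0$. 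Hence $\rho_{\mP^3}^{\star}H\sim R$ and $f^{\star}K_{\mP^3}=-4f^{\star}H\sim -4R$. This convention-sensitive identification of $f^{\star}H$ with $R$, together with getting the sign in $\omega_{\mP(\sQ_S)/S}$ right, is the one place where care is needed and is the main (if minor) obstacle.

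Finally I would combine the three inputs: $R(f)\sim\bigl(-2R+\pi_S^{\star}(4H_{\mathbb{G}|S}+\sigma)\bigr)-(-4R)=2R+\pi_S^{\star}(4H_{\mathbb{G}|S}+\sigma)$. By Proposition \ref{ilrivestimento doppio ramificato} one has $Y\sim 2R+\pi_S^{\star}\sigma$, so $R(f)\sim Y+\pi_S^{\star}(4H_{\mathbb{G}|S})$, which is the assertion. As a consistency check, since $\sigma$ is $2$-torsion the identity is compatible with $f^{\star}\sO_{\mP^3}(4)=\sO_{\mP(\sQ_S)}(2Y)$ from Lemma \ref{contattoX}, and with the fact, established in Lemmas \ref{contattoX} and \ref{sigma}, that both $Y$ (with multiplicity one) and $\Sigma$ are contained in $R(f)$.
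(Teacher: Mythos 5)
Your proposal is correct and follows essentially the same route as the paper: the Hurwitz formula $R(f)\sim K_{\mP(\sQ_S)}-f^{\star}K_{\mP^3}$, the projective-bundle formula giving $K_{\mP(\sQ_S)}\sim -2R+\pi_S^{\star}(4H_{\mathbb G|S}+\sigma)$ via $K_S=3H_{\mathbb G|S}+\sigma$, and the class $Y\sim 2R+\pi_S^{\star}\sigma$ from Proposition \ref{ilrivestimento doppio ramificato}. The only (immaterial) difference is that you evaluate $f^{\star}K_{\mP^3}$ by proving $f^{\star}H\sim R$ directly, whereas the paper writes $f^{\star}K_{\mP^3}\sim -f^{\star}X\sim -2Y$ using Lemma \ref{contattoX}; the two are consistent because $\sigma$ is $2$-torsion.
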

\begin{proof} 
By Theorem \ref{formulae} $K_{S}=3H_{\mathbb G|S}+\sigma$. Since the first Chern class of $\sQ_S$ is $H_{\mathbb G|S}$ then by the standard formula of the canonical class divisor we conclude that
$K_{\mP(\sQ_S)}\sim -2R+\pi_{S}^{\star} (4H_{\mathbb G|S}+\sigma)$. By Proposition \ref{finitezza} we know that $K_{\mP(\sQ_S)} \sim  f^{\star} (K_{\mP^{3}}) +R(f)\sim f^{\star}(-X)+R(f)$ since $X\in |4H_{ \mP^{3} }|$. Then since $\sigma\sim -\sigma$ it holds that 
\begin{equation}\label{vediva}
R(f)\sim  f^{\star}(X)  -(2R+\pi_{S}^{\star} (\sigma)) +\pi_{S}^{\star} (4H_{\mathbb G|S}).
\end{equation}
By Proposition \ref{contattoX} we know that sheaf theoretically $f^{\star}\sO_{\mP^3}(X)=\sO_{\mP(\sQ_S)}(2Y)$ and by Proposition \ref{ilrivestimento doppio ramificato} we know that $Y\in |2R+\pi_S^{\star}(\sigma)|$. This implies that $f^{\star}(X)  -(2R+\pi_{S}^{\star} (\sigma))\sim Y$. Hence by substitution inside Equation (\ref{vediva}) we have
\begin{equation}\label{vedivavadue}
R(f)\sim Y +\pi_{S}^{\star} (4H_{\mathbb G|S}).
\end{equation}
\end{proof}
\subsubsection{The Branch divisor}
We now study the branch divisor of $f\colon\mP(\sQ_S)\to\mP^3$. The claim follows by a delicate computation on Chow groups. Since there is no possibility of misunderstanding we will indicate the Chow class $[A]$ of the cycle $A$ simply by $A$. We recall here that a line $l$ inside $\mP^3$ is given as $l= H_{\mP^3}\cdot H_{\mP^3}=H^2_{\mP^3}$.

\begin{prop}\label{diramazio} $B(f)=X+\Sigma_{\rm{dou}}$
\end{prop}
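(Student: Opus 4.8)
The plan is to compute the branch divisor $B(f)$ as a cycle class in $\mathrm{CH}^1(\mP^3)=\mZ\cdot H_{\mP^3}$ and to match it against the candidate $X+\Sigma_{\rm{dou}}$, using set-theoretic identification of the components together with degree and multiplicity bookkeeping. First I would recall that, by the Riemann--Hurwitz/Hurwitz-type formula for the finite morphism $f\colon\mP(\sQ_S)\to\mP^3$ of degree $12$ (Lemma \ref{finitezza}), the branch divisor is the pushforward of the ramification divisor, namely $B(f)=f_{\star}R(f)$ as divisor classes on $\mP^3$ (more precisely $f^{\star}B(f)\sim R(f)+\text{(contribution)}$; the cleanest bookkeeping is via $f_{\star}$). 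By Lemma \ref{ramif} we already know $R(f)\sim Y+\pi_S^{\star}(4H_{\mathbb G|S})$, and by Lemmata \ref{contattoX} and \ref{sigma} the two irreducible reduced pieces of $R(f)$ are exactly $Y$ and $\Sigma$, each of multiplicity $1$ (the multiplicity-$1$ assertion for $Y$ is Lemma \ref{contattoX}, and the local computation in Lemma \ref{sigma} exhibits $\Sigma$ as a reduced ramification component). Thus set-theoretically $B(f)=f(Y)\cup f(\Sigma)=X\cup\Sigma_{\rm{dou}}$, since $f(Y)=\rho(Y)=X$ and $f(\Sigma)=\Sigma_{\rm{dou}}$ by Lemma \ref{chiusura} and the discussion preceding it.

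Having matched supports, the core of the proof is to pin down the multiplicities with which $X$ and $\Sigma_{\rm{dou}}$ occur in $B(f)$. For the component $X$: along $Y$ the map $f$ is generically simply ramified (the pencil of lines in $T_pX$ through $p$ meets $X_p$ with a ramification point of index $2$, as in the proof of Lemma \ref{contattoX}), so $X$ appears in $B(f)$ with multiplicity $1$. For $\Sigma_{\rm{dou}}$: the local model in Lemma \ref{sigma} shows that the Jacobian of $f$ vanishes to order one along $\Sigma$ transversally to the ruling, so $\Sigma_{\rm{dou}}$ also enters with multiplicity $1$; moreover $\rho_{\rm{dou}}\colon\Sigma\to\Sigma_{\rm{dou}}$ is birational (degree $1$, part of the Main Theorem), so there is no further multiplicity absorbed by the pushforward. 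Hence $B(f)=X+\Sigma_{\rm{dou}}$ as reduced divisors, and it remains only to verify that the degrees are consistent, i.e. that $\deg B(f)=\deg X+\deg\Sigma_{\rm{dou}}=4+160=164$.

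The degree check is where the delicate Chow-ring computation enters, and I expect this to be the main obstacle. The plan is to compute $\deg B(f)=f_{\star}R(f)\cdot H_{\mP^3}^{2}$ by intersecting $R(f)\sim Y+\pi_S^{\star}(4H_{\mathbb G|S})$ with the pullback of a line. Concretely, $\deg B(f)=R(f)\cdot f^{\star}(H_{\mP^3}^{2})=R(f)\cdot (2R)^{?}$—one must use $f^{\star}H_{\mP^3}$ expressed in terms of $R$ and $\pi_S^{\star}H_{\mathbb G|S}$ on $\mP(\sQ_S)$, together with the relation $f^{\star}\sO_{\mP^3}(X)=\sO(2Y)$ from Lemma \ref{contattoX}, so that $f^{\star}H_{\mP^3}=\tfrac14\cdot(2Y)=\tfrac12 Y$ as $\mQ$-divisor classes, whence $f^{\star}H_{\mP^3}^2=\tfrac14 Y^2$. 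Feeding in $Y\in|2R+\pi_S^{\star}\sigma|$ (Proposition \ref{ilrivestimento doppio ramificato}), the standard relation $R^2=\pi_S^{\star}c_1(\sQ_S)\cdot R-\pi_S^{\star}c_2(\sQ_S)=\pi_S^{\star}(H_{\mathbb G|S})\cdot R$ (since $\sQ_S$ has rank $2$ with $c_1=H_{\mathbb G|S}$ and $c_2=0$ on the surface... here one must be careful: $c_2(\sQ_S)$ need not vanish), the numerical invariants of $S$ from Theorem \ref{formulae} and Lemma \ref{chowclass} ($\deg S=40$, $H_{\mathbb G|S}^2=40$, $\sigma^2=-2H_{\mathbb G|S}^2$ or the relevant $2$-torsion computation), reduce everything to intersection numbers on $S$. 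The hard part will be correctly organizing these self-intersections on the ruled threefold $\mP(\sQ_S)$—in particular getting the $c_2(\sQ_S)$-term and the $\pi_S^{\star}\sigma$-corrections right—and confirming that the resulting number is $164$, thereby certifying that no hidden ramification component (e.g. an embedded or higher-multiplicity contribution along $C_{\rm{dou}}$, where $X$ and $\Sigma_{\rm{dou}}$ meet tangentially by Lemma \ref{osculanza}) has been overlooked. Once the degree $164$ matches $\deg X+\deg\Sigma_{\rm{dou}}$ and the supports and multiplicities agree, the identity $B(f)=X+\Sigma_{\rm{dou}}$ follows.
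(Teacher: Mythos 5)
Your proposal has the right ingredients but misses the actual logical engine of the proof, and it contains a bookkeeping error that would derail the degree check. The central gap is this: Lemma \ref{contattoX} and Lemma \ref{sigma} only give you the \emph{containment} $Y+\Sigma\leq R(f)$; they do not say that $Y$ and $\Sigma$ are the only components of $R(f)$, and hence they do not give you $B(f)=f(Y)\cup f(\Sigma)$ set-theoretically. Ruling out further components is precisely what the ``delicate Chow computation'' is for, and it works as an exhaustion-by-degree argument, not as a consistency check appended at the end. Concretely, the paper computes $l\cdot f_{\star}R(f)=f^{\star}(H^2_{\mP^3})\cdot R(f)=24+160=184$ using $R(f)\sim Y+\pi_S^{\star}(4H_{\mathbb G|S})$ from Lemma \ref{ramif}, and separately observes that $6X+\Sigma_{\rm{dou}}\leq f_{\star}R(f)$ with $\deg \Sigma_{\rm{dou}}=b\geq 160$ (the lower bound coming from the osculation $\Sigma_{{\rm{dou}}|X}\supseteq 2C_{\rm{dou}}$ with $C_{\rm{dou}}\in|80h|$). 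Since $24+b\geq 184$ already saturates the total, one gets $b=160$ \emph{and} that there is no room for any additional component. You instead take $\deg\Sigma_{\rm{dou}}=160$ as an input; but that equality is an output of this very argument (only the inequality $b\geq 160$ is available beforehand), so your reasoning is circular at that point and, more importantly, silently assumes the exhaustion you needed to prove.

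The second problem is the identification $B(f)=f_{\star}R(f)$ and the target degree $164$. The cycle pushforward satisfies $f_{\star}[Y]=6[X]$ because $\rho=f|_Y\colon Y\to X$ has degree $6$, so $f_{\star}R(f)=6X+\Sigma_{\rm{dou}}$, of degree $24+160=184$, whereas the reduced branch divisor $X+\Sigma_{\rm{dou}}$ has degree $164$. If you carry out your own computation ($f^{\star}H_{\mP^3}=\tfrac12 Y$ as a $\mQ$-class, so $R(f)\cdot f^{\star}(H^2_{\mP^3})=\tfrac14 Y^2\cdot(Y+\pi_S^{\star}(4H_{\mathbb G|S}))=24+160$) you will correctly land on $184$, which does not match your stated target of $164$; the discrepancy of $20=5\cdot 4$ is exactly the extra multiplicity of $X$ in the pushforward. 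Finally, for the evaluation of $f^{\star}(H^2_{\mP^3})\cdot\pi_S^{\star}(4H_{\mathbb G|S})$ you flag (rightly) that the $c_2(\sQ_S)$ term on the ruled threefold is problematic; the paper sidesteps this entirely by restricting to $Y$ and transporting the computation to $\mP(\Omega^1_X(1))$, where $Y\in|6T+8\rho_X^{\star}h|$ and the intersection numbers $(\rho_X^{\star}h)^3=0$, $T\cdot(\rho_X^{\star}h)^2=4$, $T^2\cdot\rho_X^{\star}h=0$ yield the value $80$ (hence $160$ after the factor $2$). You would need to either adopt that route or correctly compute $c_2(\sQ_S)$ on $S$; as written, your degree check cannot be completed.
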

\begin{proof}  By projection formula
$$
l\cdot f_{\star} R(f)=H^2_{\mP^3}\cdot f_{\star} R(f)=f^{\star}(H^2_{\mP^3})\cdot R(f).
$$
By Lemma \ref{ramif} 
\begin{equation}\label{vedivava}
R(f)\sim Y +\pi_{S}^{\star} (4H_{\mathbb G|S}),
\end{equation}
hence
$$
f^{\star}(H^2_{\mP^3})\cdot R(f)=f^{\star}(H^2_{\mP^3})\cdot Y+f^{\star}(H^2_{\mP^3})\cdot \pi_{S}^{\star} (4H_{\mathbb G|S}).
$$
Since $\sigma$ is a torsion element and $R= f^{\star}(H_{\mP^3})$ it follows that $$f^{\star}(H^2_{\mP^3})\cdot Y=f^{\star}(H^2_{\mP^3})\cdot 2f^{\star}(H_{\mP^3})=2(f^{\star}(H_{\mP^3}))^{3}=24.$$
It remains to compute the natural number: $f^{\star}(H^2_{\mP^3})\cdot \pi_{S}^{\star} (4H_{\mathbb G|S})$. We can find it by divisors restriction on $Y$. Inside ${\rm{Pic}}(\mP(\sQ_S)$ the class $Y$ is numerically equivalent to $2f^{\star}(H_{\mP^3})$. Then in the Chow algebra of $\mP(\sQ_S)$ it holds that:
$$
f^{\star}(H^2_{\mP^3})\cdot \pi_{S}^{\star} (4H_{\mathbb G|S})=f^{\star}(H_{\mP^3})\cdot f^{\star}(H_{\mP^3})\cdot \pi_{S}^{\star} (4H_{\mathbb G|S})=2\cdot f^{\star}(H_{\mP^3})\cdot
Y\cdot \pi_{S}^{\star} (H_{\mathbb G|S}).
$$
We point out that the number $f^{\star}(H_{\mP^3})\cdot Y\cdot \pi_{S}^{\star} (H_{\mathbb G|S})$ coincides with the intersection number of the following two divisors on $Y$: $(f^{\star}(H_{\mP^3}))_{\mid Y}$ and  $(\pi_{S}^{\star} (H_{\mathbb G|S}))_{\mid Y}$. Now we carry on this computation on $Y$ seen as a divisor inside $\mP(\Omega^1_X(1)))$, where we can use the conversion rules recalled in Lemma \ref{relativeonX}. By Proposition \ref{sopraX}
we know that as a class inside ${\rm{Pic}}(\mP(\Omega^1_X(1)))$, $Y\in |6T+8 \rho_{X}^{\star}(h)|$. Then
$$
(f^{\star}(H_{\mP^3}))_{\mid Y}\cdot (\pi_{S}^{\star} (H_{\mathbb G|S}))_{\mid Y}=(6T+8\rho_{X}^{\star}(h))\cdot \rho_{X}^{\star}(h)\cdot (T+2\rho_{X}^{\star}(h))
$$
and since on ${\rm{Pic}}(\mP(\Omega^1_X(1)))$ it holds that $(\rho_{X}^{\star}(h))^3=0$, $T\cdot (\rho_{X}^{\star}(h))^2=4$ and $T^2\cdot (\rho_{X}^{\star}(h))=0$ it follows that
$$
(6T+8\rho_{X}^{\star}(h))\cdot \rho_{X}^{\star}(h)\cdot (T+2\rho_{X}^{\star}(h))=
6T^2\cdot \rho_{X}^{\star}(h)+ 20 T\cdot \rho_{X}^{\star}(h)^2=80.
$$
We have shown that $l\cdot f_{\star} R(f)=24+160$. On the other hand by Lemma \ref{sigma} we know that $\Sigma_{\rm{dou}}$ is a subdivisor of $B(f)$ since $f(\Sigma)=\Sigma_{\rm{dou}}$. By Lemma \ref{osculanza} we know that $\Sigma_{\rm{dou}}$ restricts on $X$ at least to $2C_{\rm{dou}}$. By Theorem \ref{dovesta} $C_{\rm{dou}}\in |80h|$. Then $\Sigma_{\rm{dou}}\in |b H_{\mP^3}|$ where $b\geq 160$. Moreover by Lemma \ref{contattoX} we certainly have that $6X+\Sigma_{\rm{dou}}$ is at least a subdivisor of $f_{\star} R(f)$, but

$$
(6X+\Sigma_{\rm{dou}})\cdot l= 6X\cdot l+bH_{\mP^3}\cdot l=24+160.
$$
This implies that $b=160$ and that $X+\Sigma_{\rm{dou}}$ is exactly the divisor $B(f)$.
\end{proof}

\subsection{Geometrical consequences on the geometry of Gauss curves}
By Proposition \ref{diramazio} and by its proof we can complete the geometrical picture behind Proposition \ref{normalizzodouble}.
\begin{lem}\label{ramif2} $R(f)=Y+\Sigma$. In particular $\Sigma\in |\pi_{S}^{\star} (4H_{\mathbb G|S})|$
\end{lem}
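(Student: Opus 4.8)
The plan is to upgrade the linear equivalence of Lemma~\ref{ramif} to an honest equality of effective divisors by means of a pushforward degree count, reusing the intersection numbers already produced in the proof of Proposition~\ref{diramazio}. First I would record that Lemma~\ref{ramif} gives $R(f)\sim Y+\pi_S^{\star}(4H_{\mathbb G|S})$, while Lemma~\ref{contattoX} says that $Y$ occurs in $R(f)$ with multiplicity $1$. Hence I can write $R(f)=Y+D$ with $D$ effective and $D\sim\pi_S^{\star}(4H_{\mathbb G|S})$. By Lemma~\ref{sigma} the surface $\Sigma$ is a component of $R(f)$, and since $f(Y)=X$ whereas $f(\Sigma)=\Sigma_{\rm{dou}}$ are distinct surfaces we have $\Sigma\neq Y$; thus $\Sigma$ is a component of $D$, and I write $D=m\Sigma+D'$ with $m\geq 1$ and $D'$ effective not containing $\Sigma$.

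The heart of the argument is then a degree bookkeeping on $\mP^3$. Intersecting $f_{\star}D$ with a general line $l=H_{\mP^3}^2$ and using the projection formula together with $D\sim\pi_S^{\star}(4H_{\mathbb G|S})$, I get $l\cdot f_{\star}D=f^{\star}(H_{\mP^3}^2)\cdot\pi_S^{\star}(4H_{\mathbb G|S})=160$, which is exactly the number evaluated in the proof of Proposition~\ref{diramazio} (via the conversion rules on $\mP(\Omega^1_X(1))$). On the other hand the morphism $\Sigma\to\Sigma_{\rm{dou}}$ is birational: its smooth ruled structure over $\widetilde{C^{\vee}_{\rm{dou}}}$ maps birationally onto $\Sigma_{\rm{dou}}$, as established in the Corollary preceding this subsection. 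Consequently $f_{\star}\Sigma=\Sigma_{\rm{dou}}$, and since $\deg\Sigma_{\rm{dou}}=160$ by Proposition~\ref{diramazio} one obtains $l\cdot f_{\star}\Sigma=160$.

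Combining the two evaluations, $160=l\cdot f_{\star}D=m\,(l\cdot f_{\star}\Sigma)+l\cdot f_{\star}D'=160\,m+l\cdot f_{\star}D'$. Because $f$ is finite, $f_{\star}D'$ is again an effective divisor on $\mP^3$, so $l\cdot f_{\star}D'\geq 0$; the equality then forces $m=1$ and $l\cdot f_{\star}D'=0$. An effective divisor of line-degree $0$ on $\mP^3$ is zero, so $f_{\star}D'=0$, and finiteness of $f$ (each codimension-one component of $D'$ would map onto a surface of positive degree) gives $D'=0$. This yields $R(f)=Y+\Sigma$, and subtracting $Y$ from the linear equivalence of Lemma~\ref{ramif} gives $\Sigma\sim\pi_S^{\star}(4H_{\mathbb G|S})$, that is $\Sigma\in|\pi_S^{\star}(4H_{\mathbb G|S})|$.

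The step I expect to be most delicate is ensuring that $\Sigma$ enters with multiplicity exactly $1$ and that no spurious effective divisor $D'$ survives; this is precisely what the tight numerical identity $160=160\,m+(\text{nonnegative})$ forbids, so the whole proof hinges on the two \emph{independent} computations of the line-degree $160$ agreeing — the first as the Chow-ring intersection $f^{\star}(H^2_{\mP^3})\cdot\pi_S^{\star}(4H_{\mathbb G|S})$ on $\mP(\Omega^1_X(1))$, the second as $\deg\Sigma_{\rm{dou}}$ obtained from the osculation bound $\Sigma_{\rm{dou}}\supset 2C_{\rm{dou}}$ of Lemma~\ref{osculanza} together with $C_{\rm{dou}}\in|80h|$ and the birationality of $\Sigma\to\Sigma_{\rm{dou}}$. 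I would therefore be careful to verify that these two instances of $160$ are genuinely computed by disjoint means, so that their coincidence functions as a real constraint rather than a tautology.
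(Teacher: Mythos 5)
Your proof is correct and follows essentially the same route as the paper: both arguments pinch the linear equivalence $R(f)\sim Y+\pi_{S}^{\star}(4H_{\mathbb G|S})$ of Lemma~\ref{ramif} against the known components $Y$ and $\Sigma$ by reusing the intersection number $f^{\star}(H^2_{\mP^3})\cdot \pi_{S}^{\star}(4H_{\mathbb G|S})=160$ and the degree $160$ of $\Sigma_{\rm{dou}}$ from the proof of Proposition~\ref{diramazio} (the paper phrases the count on $S$ via $\Sigma=\pi_S^{\star}(C)$ and $C\in|4H_{\mathbb G|S}|$, you phrase it on $\mP^3$ via $f_{\star}$, which is the same computation by the projection formula). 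Your closing worry about circularity is unfounded: the two occurrences of $160$ need not be independent for the identity $160=160\,m+(\text{nonnegative})$ to force $m=1$ and $D'=0$, since both are established facts by the time Lemma~\ref{ramif2} is proved.
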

\begin{proof} Consider the two morphisms $f\colon \mP(\sQ_S)\to\mP^3$ and $\pi_S\colon  \mP(\sQ_S)\to S$.
The divisor $Y+\Sigma$ is a subdivisor of $R(f)$ and by Lemma \ref{ramif} $R(f)\sim Y +\pi_{S}^{\star} (4H_{\mathbb G|S})$. By construction $\Sigma\in |\pi_{S}^{\star} (C)|$. Hence $C$ is a subdivisor of a divisor
 $D\in |4H_{\mathbb G|S}|$ but looking to the $f$-direction, by the same technique used in the proof of Lemma \ref{diramazio} we have that 

$$f^{\star}(H^2_{\mP^3})\cdot \pi_{S}^{\star} (4H_{\mathbb G|S})=f^{\star}(H^2_{\mP^3})\cdot \Sigma$$
hence  $C\in |4H_{\mathbb G|S}|$, and the claim follows.
\end{proof}
\subsection{The proof of Proposition \ref{normalizzodouble}}
\label{subs.proposizionegeometrica}
By the proof of Lemma \ref{ramif2} $C\in  |4H_{\mathbb G|S}|$ and by Lemma \ref{sigmairriducibile} $C$ is irreducible. Hence by Lemma \ref{chowclass}  and by Theorem \ref{formulae} $(1)$ it holds that $C$ is an irreducible curve of arithmetical genus $\rho_{a}(C)=561$. By Proposition \ref{dualedouble} we know that ${\widetilde{ C^{\vee}_{{\rm{dou}}}} }$ has genus $561$. Then the morphism $j^{\vee}_{{\rm{dou}}}\colon {\widetilde{ C^{\vee}_{{\rm{dou}}}} }\rightarrow C\subset S$ is an embedding.

By Proposition \ref{ilrivestimento doppio ramificato} we know that $\pi\colon Y\to S$ is branched on the curve of hyperflexes which is in $|2H_{\mathbb G|S}|$. Then an analogue argument shows that $j_{{\rm{dou}}} \colon {\widetilde{ C_{{\rm{dou}}}}}\hookrightarrow Y$ too is an embedding. 

By construction we have that the diagram (\ref{diagrammadellenormalizzodouble}) is commutative. We have shown Proposition \ref{normalizzodouble}.

\subsection{The proof of the Main Theorem}

\label{subs.teoremageometrico}
We have shown in Proposition \ref{lisciezza} that $S$ is smooth. By Proposition \ref{ilrivestimento doppio ramificato} we know that $Y$ is smooth. By Proposition \ref{propietanumeriche} we have a full classification of the singularities of the double cover curve $C_{\rm{dou}}\subset X$. By Proposition \ref{dualedouble} we have a full classification of the singularities of the dual curve  $C^{\vee}_{\rm{dou}}$. By Lemma \ref{finitezza} the morphism $f\colon\mP(\sQ_S)\to\mP^3$ is finite of degree $12$. By the proof of Proposition \ref{diramazio} we know that $\Sigma_{\rm{dou}}$ is a surface of degree 160. By Lemma \ref{osculanza} and by the proof of Proposition \ref{diramazio} we know that $\Sigma_{{\rm{dou}}_{\mid X}}$ is exactly $2C_{\rm{dou}}$. Finally by proposition \ref{diramazio} $B(f)=X+\Sigma_{\rm{dou}}$, by Lemma \ref{ramif2} $R(f)=Y+\Sigma$, by Lemma \ref{contattoX} the morphism $f_{\mid Y}=\rho\colon Y\to X$ is of degree $6$ and by the proof of Lemma \ref{ramif2} and by construction the morphism $f_{\mid\Sigma}\colon \Sigma\to\Sigma_{\rm{dou}}$ is birational. We have shown the Main Theorem.

\end{document}